\setlist[enumerate, 1]{label={\arabic*)}}
\theoremstyle{plain}
\newtheorem{prop}{Proposition}[section]
\Crefname{prop}{Proposition}{Propositions}
\newaliascnt{cor}{prop}
\newtheorem{cor}[cor]{Corollary}
\newaliascnt{lemma}{prop}
\newtheorem{lemma}[lemma]{Lemma}
\newtheorem{thm}{Theorem}
\Crefname{thm}{Theorem}{Theorems}
\newtheorem{thmR}{Theorem}
\theoremstyle{definition}
\newaliascnt{defin}{prop}
\newtheorem{defin}[defin]{Definition}
\newtheorem*{definsn}{Definition}
\newaliascnt{rmk}{prop}
\newtheorem{rmk}[rmk]{Remark}
\newcommand{\R}{\mathbb{R}}
\newcommand{\sd}{\text{std}}
\newcommand{\XX}{\mathfrak{X}}
\newcommand{\Lie}{\mathcal{L}}
\newcommand{\FF}{\mathcal{F}}
\newcommand{\NN}{\mathcal{N}}
\newcommand{\zero}{\mathbf{0}}
\newcommand{\id}{\text{id}}
\newcommand{\Z}{\mathbb{Z}}
\newcommand{\emb}{\text{emb}}
\newcommand{\bimath}{\bar{\imath}}
\renewcommand{\epsilon}{\varepsilon}
\renewcommand{\theta}{\vartheta}
\DeclareMathOperator{\Emb}{Emb}
\DeclareMathOperator{\Loop}{Loop}
\DeclareMathOperator{\tb}{tb}
\DeclareMathOperator{\Diff}{Diff}
\DeclareMathOperator{\Imm}{Im}
\DeclareMathOperator{\Ker}{Ker}
\DeclareMathOperator{\diver}{div}
\DeclareMathOperator{\Tr}{Tr}
\DeclarePairedDelimiter{\ang}{\langle}{\rangle}
\DeclarePairedDelimiter{\abs}{\lvert}{\rvert}
\DeclarePairedDelimiter{\Abs}{\lVert}{\rVert}
\begin{document}
\title[Invariant tight contact structures]{Classification of invariant tight contact structures on the 3-space, -ball and -sphere}
\author{Mirko Torresani}
\address{DIMAI, viale Giovan Battista Morgagni 67/a, 50134, Firenze, Italy}
\subjclass[2020]{Primary 57K33, 57M60; Secondary 57K10}
\begin{abstract}
	We prove some classification results for tight contact structure in the 3-space, -ball and -sphere that are invariant with respect to some arbitrary involution, that is conjugated to the standard rotation around the x-axis. Unlike the classical scenario, a new integral torsion appears, dictating a splitting between equivalence classes. These tools could be useful fur future classification results regarding strongly invertible Legendrian knots.
\end{abstract}
\maketitle
\tableofcontents
\section{Introduction}\label{sec:introdution}
In recent year the study of strongly invertible Legendrian knots has received considerable attention. The aim of this paper is to build some basic classification results for tight contact structures in $\R^3$, $S^3$ and $B^3_{\sd}$ that are invariant with respect to some $\Z/2\Z$-action on such spaces.

Recall that a contact structure $\xi$ on an oriented three manifold $M$ is a plane distribution such that, if locally we fix a form $\alpha$ whose Kernel is $\xi$, then $\alpha \wedge d\alpha$ is a non-vanishing 3-form. The structure $\xi$ is \emph{positively cooriented} if it coincides with $\Ker(\alpha)$ for some global 1-form $\alpha$, such that $\alpha \wedge d\alpha$ is positive with respect of the orientation on $M$. From now we will always suppose such condition to hold.

A contact structure $\xi$ is \emph{overtwisted} if there exists an embedded disk $D$ with $T_pD$ intersecting $\xi_p$ only at $T_p\partial D$ for every $p \in \partial D$. If $\xi$ is not overtwisted, then it is \emph{tight}.

Furthermore, an oriented surface $S$, in a neighborhood $S \times \R$ the contact form can be written as $\beta_z + w_z\,dz$, with $w_z \colon S \to \R$ a smooth function and $\beta_z$ identifiable a 1-form $S$. The contact condition is given by
\begin{equation}\label{eq:contcond}
	w_z \,d\beta_z +\beta_z \wedge(dw_z - \dot{\beta}_z) > 0\,.
\end{equation}
Moreover, if we equip $S$ of an area form $\Omega$, then we define the \emph{characteristic foliation} $S_\xi$ to be the unique vector field $X$ such that $i_X\Omega = \beta_0$, where $i_X$ is the contraction given by substituting $X$ in the \emph{first entry}. The singularity of $X$ are the points where the planes $\xi_p$ and $T_pS$ coincide. we give signs to such singularities: \emph{positive} if such planes have same orientation, \emph{negative} otherwise.

Finally, given a global contact form $\alpha$, the \emph{Reeb vector field} is the unique vector field $R_\alpha \in \mathfrak{X}(\R^3)$ such that 
\[
\begin{split}
	d\alpha(R_\alpha,\cdot) &\equiv 0 \\
	\alpha(R_\alpha) &\equiv 1.
\end{split}
\]

Let's now introduce the involution we will be working with. A priori we want to fix a generic involution $u$, acting on $\R^3$, with the following properties:
\begin{enumerate}
	\item it extends to an infinity-preserving involution of $S^3$;
	\item the fixed point set is non-empty, at every fixed point the differential, viewed as an endomorphism of the tangent space, is diagonalizable as $[1] \oplus [-1] \oplus [-1]$.
\end{enumerate}
Thanks to Waldhausen's results, such $u$ is conjugated to a standard rotation. However, it will be more useful to abstract the reasoning up to the last section.

We say that a contact structure $\xi$ is \emph{invariant} if $u^*\xi = \xi$ and $u^*\alpha = -\alpha$. The aim is to classify such objects up to \emph{equivariant isotopy}, i.e.\ up to isotopies $\psi_t$ of the space such that $\psi_t \circ u$ coincides with $u \circ \psi_t$. The three main theorems are then the following. To future reference our isotopies will always start at the identity.

\begin{thm}\label{thmmainthmRthree}
	The set of tight contact structures on $\R^3$ up to equivariant isotopy is trivial.
\end{thm}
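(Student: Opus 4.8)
The plan is to bring every invariant tight $\xi$ to a single normal form in two stages: first standardise a neighbourhood of the fixed axis, then propagate that standardisation over all of $\R^3$. It is harmless to conjugate $u$ by the diffeomorphism supplied by Waldhausen's theorem and assume $u$ is the linear rotation $u_0(x,y,z)=(x,-y,-z)$, with reference structure $\xi_\sd=\Ker(dz-y\,dx)$; this $\xi_\sd$ is tight and satisfies $u_0^*(dz-y\,dx)=-(dz-y\,dx)$, so it is an admissible representative and the theorem reduces to equivariantly isotoping every invariant tight $\xi$ to it. The first observation is that $A:=\mathrm{Fix}(u)$, a properly embedded line along which the $+1$-eigendirection of $du$ is tangent, is \emph{Legendrian}: for $p\in A$ and $v\in T_pA$ one has $(u^*\alpha)_p(v)=\alpha_p(du_pv)=\alpha_p(v)$, which by $u^*\alpha=-\alpha$ must equal $-\alpha_p(v)$, forcing $\alpha_p(v)=0$ and hence $T_pA\subset\xi_p$.

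Next I would establish an equivariant Legendrian neighbourhood theorem: there is a $u$-invariant tubular neighbourhood $U$ of $A$ and a contactomorphism from $(U,\xi|_U)$ onto a neighbourhood of the $x$-axis in $(\R^3,\xi_\sd)$ intertwining $u$ with $u_0$. This is the Weinstein--Moser argument performed in the equivariant category. Choose a $u$-invariant tubular neighbourhood of $A$; by an equivariant Darboux step along $A$ (available because over the contractible base $A$ the bundle $\xi|_A$ and its complement are $u$-equivariantly trivial) arrange that a $u$-invariant contact form $\alpha$ agrees to first order along $A$ with $dz-y\,dx$; linearly interpolate through $u$-invariant forms $\alpha_t$, which remain contact near $A$; and solve the contact homological equation $\Lie_{X_t}\alpha_t+\dot\alpha_t=\mu_t\alpha_t$. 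Since $u^*\alpha_t=-\alpha_t$ and the equation is $u$-natural, its unique solution $X_t$ is $u$-invariant, so its time-one flow is the desired equivariant contactomorphism; composing with it we may assume $\xi=\xi_\sd$ on a fixed $u$-invariant neighbourhood $U$ of $A$.

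For the global step it then suffices to isotope any two invariant tight structures that agree near $A$ to one another equivariantly, which I would do by exhausting $\R^3$ by nested round $u$-invariant balls $B_1\subset B_2\subset\cdots$ centred at a point of $A$, each meeting $A$ in a diameter, and inductively producing an equivariant isotopy that is the identity near $A$ and on $B_{n-1}$ and brings $\xi$ into agreement with $\xi_\sd$ on $B_n$; the resulting composition is locally finite, hence a well-defined equivariant isotopy of $\R^3$ carrying $\xi$ to $\xi_\sd$. The inductive step is an equivariant relative form of Eliashberg's uniqueness of tight contact structures on the ball: two invariant tight structures on $B^3$ agreeing near the fixed diameter and inducing equivariantly isotopic characteristic foliations on $\partial B^3$ are equivariantly isotopic rel a neighbourhood of the diameter. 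One proves this with equivariant convex-surface theory, working with an invariant convex sphere on which $u$ acts as a rotation about two poles, choosing dividing curves $u$-invariantly, and performing the bypass and Legendrian-realisation moves equivariantly and away from $A$.

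I expect this last step to be the main obstacle: making Eliashberg's filling and bypass machinery simultaneously $u$-equivariant, relative to a neighbourhood of $A$, and compatible with the exhaustion (so that every isotopy in the tower is the identity near $A$ and the infinite composition converges) is where the substantive work lies, and it is also where the non-compactness of $\R^3$ enters. By contrast, Steps 1 and 2 are essentially formal once the equivariant Moser trick is in place.
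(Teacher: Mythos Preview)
Your overall architecture---reduce to the linear involution via Waldhausen, standardise along the fixed axis, exhaust by invariant balls, and invoke an equivariant uniqueness result on each shell---matches the paper's. The execution differs in two places, and one of them is more delicate than you suggest. Your Step~2 asks for an equivariant Legendrian neighbourhood theorem along the \emph{entire non-compact} axis $A$. The Moser argument you sketch produces a time-dependent vector field vanishing along $A$, but integrating it for unit time near a non-compact line is not automatic: the region where the interpolated forms stay contact may shrink at infinity, and the paper explicitly notes (after \Cref{EGray}) that Gray stability on $\R^3$ only yields a flow over compacta. The paper sidesteps this. It applies equivariant Darboux only at the origin, and then aligns the contact \emph{planes} along the positive $x$-axis---just the line, not a neighbourhood---by the explicit rotation isotopy of \Cref{Rthreestandardxaxis}. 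This is an elementary global diffeotopy, not a Moser argument, and it is exactly what is needed for the shell step.

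The reason ``exactly what is needed'' matters is the second difference. The paper's core result (\Cref{thm:mainthmtomographyStwo}) is that invariant tight structures on $S^2\times[-1,1]$ rel boundary form a $\Z$-torsor, detected by a twisting invariant along a fixed-point arc $F$. Your inductive hypothesis ``agreeing near the fixed diameter'' is precisely what forces this twisting difference to vanish, so your claimed uniqueness on the ball is correct---but you have not identified the obstruction, and without it one cannot see why $\R^3$ is trivial while $B^3$ and $S^3$ are genuine $\Z$-torsors (the paper's other main theorems): the open end of $\R^3$ is what lets the rotation isotopy untwist. Two smaller points: the paper's tomography gives isotopies rel the boundary spheres, not rel a neighbourhood of the diameter as you request; only the former is needed for the exhaustion to converge. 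And the equivariant convex theory actually used is Giroux-style tomography (Morse--Smale foliations on slices, elimination lemma, smooth families of dividing circles), not bypass attachments.
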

\begin{thm}\label{thmmainthmDthreecomplementary}
	If such action $u$ preserves the complementary of the standard $3$-ball, then the set of invariant tight contact structures on $(B^3_{\sd})^c$, inducing a fixed characteristic foliation on $S^3 = \partial B^3_{\sd}$, up to equivariant isotopy is trivial.
\end{thm}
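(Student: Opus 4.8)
The plan is to normalise the contact structure on a collar of the boundary sphere and then reduce to \cref{thmmainthmRthree} by an equivariant inversion turning the complementary ball into $\R^3$. By Waldhausen's theorem we may assume $u$ is the rotation $(x,y,z)\mapsto(x,-y,-z)$ about the $x$-axis; under our hypothesis this rotation preserves $B^3_{\sd}$ as well, and we may equivariantly identify $B^3_{\sd}$ with a round ball centred on the axis, so that $B:=(B^3_{\sd})^c$ is an invariant ball whose fixed-point set is a properly embedded unknotted arc. Since $\alpha_{\sd}=dz+x\,dy$ satisfies $u^*\alpha_{\sd}=-\alpha_{\sd}$, the structure $\xi_{\sd}=\ker\alpha_{\sd}$, extended over $\infty$, is an invariant tight contact structure on $S^3$; write $\sigma$ for the $u$-invariant characteristic foliation it cuts on $S^2=\partial B$. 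The prescribed foliation in the statement is realised by some invariant tight structure (otherwise there is nothing to prove), and any such $u$-invariant foliation is equivariantly isotopic to $\sigma$ by equivariant normalisation of characteristic foliations on a convex sphere; so we may assume it equals $\sigma$, and it suffices to show that every invariant tight $\xi$ on $B$ inducing $\sigma$ is equivariantly isotopic to $\xi_{\sd}|_B$.

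\emph{Step 1: normalisation near $\partial B$.} I would first establish the equivariant neighbourhood theorem: two invariant contact structures inducing the same $u$-invariant foliation on the invariant surface $S^2$ agree on a neighbourhood of $S^2$ after an equivariant isotopy fixing $S^2$. This is the Moser trick applied to the affine path of anti-invariant contact forms joining the two germs; because $\sigma$ is $u$-invariant the path can be chosen anti-invariant, so the infinitesimal generator it produces is canonically $u$-equivariant and tangent to $S^2$, and integrating it yields an equivariant isotopy. Applying this, we may assume $\xi=\xi_{\sd}$ on an invariant collar $S^2\times[0,1]\subset B$ of $\partial B$.

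\emph{Step 2: reduction to $\R^3$.} The interior of $B$ is $\{x\in\R^3:|x|>1\}\cup\{\infty\}$, and $x\mapsto x/|x|^2$ (with $\infty\mapsto 0$), composed with a radial diffeomorphism of the open unit ball onto $\R^3$, is a diffeomorphism $\mathrm{int}(B)\to\R^3$ that commutes with $u$ and carries a punctured neighbourhood of $\partial B$ onto a neighbourhood of infinity. Transporting $\xi$ and $\xi_{\sd}$ through it yields invariant tight contact structures $\eta,\eta'$ on $\R^3$ that coincide outside a compact set $K$. By \cref{thmmainthmRthree} each of $\eta,\eta'$ is equivariantly isotopic to the standard structure, hence to the other; transporting such an isotopy back to $\mathrm{int}(B)$ and extending it by the identity over $\partial B$ would complete the proof---\emph{provided} the $\R^3$-isotopy is compactly supported, which is exactly the condition ensuring that it extends over the boundary sphere.

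\emph{Expected main obstacle.} The delicate point is the last clause of Step 2: upgrading ``equivariantly standard'' to ``equivariantly standard through a compactly supported isotopy''. A priori there could be invariant tight structures on $B$ inducing $\sigma$ that are not equivariantly isotopic to $\xi_{\sd}|_B$---this is the kind of phenomenon the integral torsion of the abstract detects in the non-complementary cases---so the substance of the theorem is that this obstruction is absent here. I expect to obtain the relative statement by re-running the proof of \cref{thmmainthmRthree} while keeping track of supports: the mechanism exhausting $\R^3$ by invariant standard balls is compatible with leaving untouched a region on which the structure is already standard. Should a clean bookkeeping not be available, the fallback is an equivariant Gray-stability argument, once one shows that the invariant tight structures on $\R^3$ that are standard near infinity form a path-connected space---an equivariant analogue of Eliashberg's uniqueness theorem for tight structures on $B^3$ rel boundary. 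Either way, the equivariant control of supports (equivalently, an equivariant convex-surface discretisation of the interpolating family along $\partial B$) is where the real work lies.
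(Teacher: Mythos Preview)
Your inversion is a natural reframing, but the obstacle you name in Step~2 is the entire content of the theorem, and your proposed patch does not close it as stated. Demanding that the $\R^3$-isotopy be \emph{compactly supported} is too strong: after a Darboux normalisation at the origin, restricting to a large ball $B_R\supset K$ and asking for an isotopy rel $\partial B_R$ lands you squarely in the tomography of \Cref{thm:mainthmtomographyStwo}, which is a $\Z$-torsor---so in general no compactly supported equivariant isotopy between $\eta$ and $\eta'$ exists. Compact support is sufficient for the extension across $\partial B$ but not necessary, and insisting on it discards exactly the freedom that kills the torsion.

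The paper does not invert; it works directly in $B^c\subset\R^3\cup\{\infty\}$ and runs parallel to the proof of \Cref{thmmainthmRthree}. After matching the two structures on a collar of $\partial B$ via \Cref{Girouxbound} (your Step~1), the decisive additional step---missing from your outline---is to pick $x_0$ on the fixed axis inside that collar and apply the sphere-rotating isotopy of \Cref{Rthreestandardxaxis} to make both structures agree along the entire ray $[x_0,+\infty)$. This isotopy is \emph{not} compactly supported, but it is an equivariant isotopy of $B^c$, which is all the statement requires. Once the structures coincide on the ray, one exhausts by large round spheres, matches near each, and on every resulting shell the $F$-twisting difference of \Cref{twdif} vanishes by construction; \Cref{thm:mainthmtomographyStwo} then supplies a rel-boundary isotopy on each shell. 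Were you to carry out your ``re-run of \Cref{thmmainthmRthree} with supports'' in the inverted picture you would rediscover exactly this argument: the Darboux step becomes the collar normalisation you already performed, and the ray normalisation is the non-compactly-supported rotation that absorbs the torsion---so the inversion is a detour, and the paper's direct route is what remains once it is unwound.
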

\begin{thm}\label{thmmainthmDthree}
	If such action $u$ preserves the standard $3$-ball, then the set of invariant tight contact structures on $B^3_{\sd}$, inducing a fixed characteristic foliation on $S^3 = \partial B^3_{\sd}$, up to equivariant isotopy is a $\Z$-torsor.
\end{thm}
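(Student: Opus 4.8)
The plan is to extract the integral torsion promised in the abstract as an explicit $\Z$-valued invariant carried by the fixed arc, to check that it is a complete invariant, and to produce a free transitive $\Z$-action realizing every value. Let $L$ denote the fixed-point set of $u$ inside $B^3_{\sd}$; after putting $u$ in standard position this is a properly embedded unknotted arc meeting $\partial B^3_{\sd}$ transversally in two points. For an invariant contact structure, evaluating $u^*\alpha = -\alpha$ at a point $p \in L$ gives $\alpha_p \circ du_p = -\alpha_p$, and since $du_p$ restricts to the identity on its $+1$-eigenline $T_pL$ we obtain $\alpha_p(T_pL) = 0$; hence $L$ is Legendrian and $\xi|_L$ is a plane field that contains $TL$ everywhere. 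Fixing once and for all a trivialization of the normal bundle $\nu(L) \cong L \times \R^2$ (unique up to homotopy, as $L$ is an arc), the image of $\xi|_L$ in $\nu(L)$ is a line field, i.e.\ a path $\gamma_\xi \colon L \to \R P^1$ whose two endpoints are fixed by the prescribed characteristic foliation on $\partial B^3_{\sd}$ (that foliation determines $\xi$, hence $\xi \cap T(\partial B^3_{\sd})$, at the two points of $\partial L$). The homotopy class of $\gamma_\xi$ rel endpoints is therefore a well-defined integer $t(\xi) \in \pi_1(\R P^1) \cong \Z$, the torsion in question. One then checks that $t$ is invariant under equivariant isotopy fixing the boundary foliation: such an isotopy preserves $\mathrm{Fix}(u)$ and fixes $\partial L$, so it restricts to a family of arcs along which the winding number of $\xi$ cannot jump.

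For the $\Z$-action, start from some invariant tight $\xi$ inducing the given boundary foliation (one exists for admissible boundary data, e.g.\ by restricting an invariant tight structure on $\R^3$ --- these exist by Theorem~\ref{thmmainthmRthree} --- to an invariant ball and matching the foliation by an equivariant convex-surface adjustment), and define $\xi \cdot n$ by inserting $n$ full twists of the plane field along $L$: a modification supported in an invariant tubular neighborhood of $L$, equal to the identity near $\partial B^3_{\sd}$, that adds $n$ loops to $\gamma_\xi$. The result is again tight --- this I would check directly, either by an equivariant convex-surface argument near $L$, or by noting that once equivariance is dropped the extra twisting along an unknotted arc can be unwound, so $\xi \cdot n$ is \emph{non-equivariantly} isotopic rel $\partial B^3_{\sd}$ to $\xi$ and Eliashberg's uniqueness applies --- and $t(\xi \cdot n) = t(\xi) + n$, so the action is free and surjective onto $\Z$. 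This last point is also the conceptual content of the theorem: the unwinding isotopy cannot be made to commute with $u$, so the equivariant count inflates from a single class to a $\Z$-torsor.

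It remains to prove transitivity, which is where the real work lies. Given invariant tight $\xi_0,\xi_1$ with the same boundary foliation and $t(\xi_0)=t(\xi_1)$, I would first prove an equivariant Legendrian-neighborhood theorem at $L$: using the $du_p = [1]\oplus[-1]\oplus[-1]$ splitting along the fixed arc, an equivariant Moser argument shows that the germ of an invariant contact structure along $L$ is determined, up to equivariant isotopy, by $t$ alone. Together with an equivariant collar argument near $\partial B^3_{\sd}$ this lets me assume $\xi_0 = \xi_1$ on an invariant neighborhood $N$ of $L \cup \partial B^3_{\sd}$. The complement $W = B^3_{\sd}\setminus N$ is a solid torus on which $u$ acts \emph{freely}; passing to the quotient $W/u$ --- again a solid torus --- the $\xi_i$ descend to tight contact structures (tightness descends, since an overtwisted disc downstairs, being simply connected, lifts) that agree near $\partial(W/u)$ and carry identical boundary data. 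The classification of tight contact structures on a solid torus relative to a convex boundary (Giroux, Honda) then applies, and one verifies that once the boundary dividing set and the twisting $t$ are pinned there is no residual relative half-twist; hence $\xi_0/u$ and $\xi_1/u$ are isotopic rel boundary, and lifting this isotopy to $W$ and extending by the identity over $N$ produces the sought equivariant isotopy. Equivalently one can bypass the quotient and run Eliashberg's uniqueness argument for $B^3_{\sd}$ equivariantly, through a $u$-invariant book of convex disks with binding $L$, the point in either route being that $t$ measures exactly the twisting of $\xi$ along the binding that the non-equivariant argument is free to destroy. The two steps I expect to resist are the equivariance of the neighborhood theorem at the fixed locus and the verification that nothing survives in $W/u$ after $t$ is fixed; for the latter I would express the relative Euler class of $\xi_i/u$ on $W/u$ in terms of $t(\xi_i)$ and the (fixed) boundary data.
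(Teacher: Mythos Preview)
Your torsion invariant $t(\xi)$ along the fixed arc $L$ is exactly the paper's $F$-twisting difference (\Cref{twdif}), and the argument that $L$ is Legendrian is correct. But for transitivity the paper takes a much shorter route than you propose: after conjugating $u$ to the standard rotation, one applies equivariant Darboux (\Cref{equalneighorigin}) to make $\xi_0,\xi_1$ agree on a small ball around the origin and the boundary form of equivariant Giroux (\Cref{Girouxbound}) to make them agree near $\partial B^3$; the region in between is then an equivariant tomography $S^2 \times [-1,1]$, and \Cref{thm:mainthmtomographyStwo} immediately delivers the $\Z$-torsor. That is the entire argument --- the heavy lifting was already done in \Cref{sec:tomography} via invariant convex spheres and the computation $\pi_1(\Loop^{eq}(S^2)) \cong \Z$ of \Cref{EBaer}.

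Your quotient-solid-torus route is genuinely different and not obviously wrong, but it leaves a real gap at precisely the point you flag: after passing to $W/u$ you invoke the Giroux--Honda classification, yet you never compute the dividing set on $\partial(W/u)$ nor verify that the resulting count is one. This is not a formality --- the number of tight structures on a solid torus rel convex boundary depends on the boundary slope, and you would have to track how the double cover $W \to W/u$ transforms meridian, longitude, and the dividing curves contributed by both the tube around $L$ and the sphere collar, and then argue that the relative Euler class is pinned by $t$. The paper's decomposition along invariant concentric spheres sidesteps all of this by never leaving the equivariant category; your alternative suggestion of an invariant open book with binding $L$ is closer in spirit, but the sphere tomography is still more direct given the tools already built.
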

\begin{thm}\label{thmmainthmSthree}
	The set of invariant tight contact structures on $S^3$, up to equivariant isotopy, is a $\Z$-torsor.
\end{thm}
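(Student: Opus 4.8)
The plan is to cut $S^3$ along a $u$-invariant $2$-sphere into the standard ball and its complement, and to read off the answer from \Cref{thmmainthmDthree} and \Cref{thmmainthmDthreecomplementary}. After conjugating $u$ by a diffeomorphism of $S^3$ we may assume it is the standard rotation about the $x$-axis, so that it preserves the round ball $B^3_{\sd}$, its complement $(B^3_{\sd})^c$, and the round sphere $S^2 = \partial B^3_{\sd}$, acting on the latter as the rotation by $\pi$ with two fixed points. Since $u^*\alpha = -\alpha$ reverses the coorientation of $\xi$ while $u$ preserves the orientations of both $S^3$ and $S^2$, on any $u$-invariant convex sphere the involution interchanges the positive and negative regions; an elementary argument then shows that a $u$-invariant dividing curve cannot avoid the two fixed points of $u|_{S^2}$, and, being connected by tightness (Giroux's criterion for the sphere), it is, after an equivariant isotopy of $S^2$, the standard great circle through those two points. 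Fix once and for all a $u$-invariant characteristic foliation $\sigma$ on $S^2$ carried by this configuration, with exactly one positive and one negative elliptic singularity, interchanged by $u$.

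\textbf{Equivariant normalisation.} Given an invariant tight $\xi$ on $S^3$, I would first make $S^2$ convex by an equivariant isotopy of $S^3$ (an equivariant form of Giroux's genericity), and then apply equivariant Giroux flexibility to isotope $\xi$, equivariantly and supported in a neighbourhood of $S^2$, until its characteristic foliation equals $\sigma$. In this way every equivariant-isotopy class of invariant tight contact structures on $S^3$ acquires a representative that is split along $(S^2,\sigma)$; moreover, by the same equivariant convex-surface technology together with the equivariant isotopy extension theorem, two split representatives of one class differ by an equivariant isotopy of $S^3$ preserving $S^2$ and $\sigma$.

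\textbf{Cutting and pasting.} Restricting a split representative to the two pieces then defines a map from the set of equivariant-isotopy classes of invariant tight structures on $S^3$ to the product of the analogous sets for $(B^3_{\sd},\sigma)$ and for $((B^3_{\sd})^c,\sigma)$, in both of which the boundary foliation $\sigma$ is fixed. By \Cref{thmmainthmDthree} the first set is a $\Z$-torsor and by \Cref{thmmainthmDthreecomplementary} the second is a single point, so the target is a $\Z$-torsor. This map is well defined by the final sentence of the previous step; it is injective because two equivariant isotopies of the two pieces that agree along $S^2$ glue to an equivariant isotopy of $S^3$; and it is surjective: choosing, for the prescribed class on the ball and the unique class on the complement, representatives that induce $\sigma$ on the boundary and agree on a collar of $S^2$, their $u$-invariance lets them glue to a $u$-invariant contact structure $\xi$ on $S^3$, which is tight because, forgetting $u$, Eliashberg's uniqueness of tight contact structures on $B^3$ relative to a fixed boundary foliation makes both halves isotopic to the corresponding halves of $\xi_{\sd}$, hence $\xi$ isotopic to $\xi_{\sd}$. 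Thus the map is a bijection and transports the $\Z$-torsor structure; non-emptiness is in any case clear because $\xi_{\sd}$ is $u$-invariant.

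I expect the real content to lie entirely in the normalisation step, namely in establishing the $\Z/2\Z$-equivariant analogues of Giroux's genericity and flexibility theorems, together with equivariant isotopy extension, so that the splitting along $(S^2,\sigma)$ both exists and is canonical up to equivariant isotopy. Granting those --- presumably the substance of the preceding sections --- the theorem reduces to the bookkeeping above, the single factor $\Z$ being exactly the new invariant torsion detected on the ball side.
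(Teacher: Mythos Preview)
Your approach is workable but takes a longer route than the paper. You split $S^3$ along the round sphere $\partial B^3_{\sd}$ into two pieces and invoke both \Cref{thmmainthmDthree} and \Cref{thmmainthmDthreecomplementary}; the paper uses only \Cref{thmmainthmDthree}. Its argument is three lines: after conjugating $u$ to the standard rotation, apply \Cref{equalneighorigin} (equivariant Darboux) to make $\xi$ standard inside a tiny ball $B_\epsilon$ about the origin; the closure of $S^3\setminus B_\epsilon$ is again a $3$-ball by Sch\"onflies, carrying the fixed boundary foliation inherited from the Darboux model, and \Cref{thmmainthmDthree} applies directly to that complement. The normalisation is thus pointwise rather than along an entire sphere, and there is only one piece to track.

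This matters because the ingredient you lean on hardest --- equivariant Giroux \emph{flexibility}, i.e.\ forcing the characteristic foliation on a convex $u$-invariant sphere to equal a prescribed $\sigma$ --- is not among the results the paper establishes. The preceding sections give an equivariant Giroux \emph{neighbourhood} theorem (\Cref{EGiroux}, \Cref{Girouxbound}) and equivariant genericity of convexity for spheres (\Cref{Cinftyconvex}), but not realisation of a foliation within a given dividing set, so your normalisation step is not actually supplied by the toolkit you are granting. A second point you pass over is well-definedness: if two split representatives are equivariantly isotopic on $S^3$, the global isotopy need not preserve $S^2$, and carrying $\psi_1(S^2)$ back to $S^2$ equivariantly and contact-isotopically inside a fixed tight structure is itself a statement the paper never proves. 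Both issues are repairable --- for instance by reading the $\Z$-invariant directly as the framing that $\xi$ induces on the fixed-point circle, which is manifestly preserved by equivariant isotopy --- but the paper's Darboux-at-a-point trick simply avoids them.
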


Before passing to the next section, let's briefly define the surfaces, and vector fields, we will work with. All the surfaces will be oriented and connected.
\begin{definsn}
	A surface $S \subseteq \R^3$ preserved by $u$ is said to be $u$-oriented if it is equipped with an area form $\Omega \in \Lambda^2S$ such that $u^*\Omega = \Omega$.
\end{definsn}
\begin{definsn}
	Given an $u$-oriented surface $S$, an anti-invariant singular foliation on $S$ is an element of
	\[
		\{X \in \XX(S) \mid u^*X = -X\}/\sim\,,
	\]
	where
	\begin{enumerate}
		\item $u^*X$ at a point $p$ is defined as the pull-back, via the differential, of $X(u(p))$;
		\item $X$ is equivalent to $X'$ if there exists a smooth function $f \colon Z \to \R^+$ such that $f \circ u = f$ and $X' = f\,X$.
	\end{enumerate}
\end{definsn}
The equivalence class of the characteristic foliation on a $u$-oriented surface induced by an invariant contact structure is an example of such foliation.

For now on the term \emph{anti-invariant} will be omitted, and we will refer to the previous vector fields simply as \emph{singular foliation}.

The paper will be structured as follows. In \Cref{sec:EThm} we state and prove equivariant versions of classical theorems in contact topology; \Cref{sec:EConvex} is dedicated to expand on this theorem in order to set foundation for a basic equivariant convex theory. Although the setting is new, most of the work consists in checking that the $u$-action respects the quantity at play. The last preparations are done in \Cref{sec:tomography}, in which a classification theorem for invariant tight contact structure in $S^2 \times [-1,1]$ is proven. Here is where the equivariant scenario diverges from the classical one: a new $\Z$-torsor appears, dictating a splitting between equivalence classes. Finally \Cref{sec:proofmainthmopen,sec:proofmainthmclose} are devoted to proving the main results.
\section{Equivariant Giroux and Darboux' Theorems}\label{sec:EThm}
The aim of this section is to state and prove equivariant versions of classical results in contact topology: Gray, Giroux and Darboux' theorems. Such restatements are given by \Cref{EGray,EGiroux,EDarbouxustd} respectively.

Let's introduce the following notation: given a surface $S$, we will extend the action on $S \times \R$ as $u(x,y) = (u(x), y)$.
\begin{prop}\label{tubsurf}
	Each $u$-oriented surface admits a tubular neighborhood diffeomorphic $S \times \R$, which is $u$-equivariant, and such that the action is given by the convention above.
\end{prop}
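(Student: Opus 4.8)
The plan is to start from an ordinary tubular neighborhood of $S$ and then average it over the $\Z/2\Z$-action to make it equivariant, being careful about how the involution interacts with the normal direction. First I would invoke the classical tubular neighborhood theorem to get a diffeomorphism $\varphi_0 \colon S \times (-1,1) \to \NN_0$ onto an open neighborhood of $S$ in $\R^3$, with $\varphi_0(x,0) = x$; rescaling the $\R$-factor I may as well take the neighborhood to be $S \times \R$. The issue is that $\varphi_0$ need not intertwine the involution $u$ on $\R^3$ with the model action $(x,t) \mapsto (u(x), t)$ on $S \times \R$.

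To fix this, I would first choose the normal bundle in a $u$-compatible way. Pick a Riemannian metric $g$ on $\R^3$ (or on a neighborhood of $S$) and average it: replace $g$ by $\tfrac12(g + u^*g)$, so $u$ becomes an isometry near $S$. Since $u$ preserves $S$, it then preserves the $g$-orthogonal normal bundle $\nu \to S$, acting fiberwise linearly. Each fiber $\nu_x$ is one-dimensional, and $u_*$ sends $\nu_x$ to $\nu_{u(x)}$; I claim this fiberwise action is by $-1$ in a suitable identification. Indeed, along the fixed-point set the hypothesis that $du$ is conjugate to $[1]\oplus[-1]\oplus[-1]$ forces the normal direction to $S$ (on which $du$ restricts nontrivially — here is where one uses that $S$ is genuinely $u$-invariant and not, say, transverse to $\text{Fix}(u)$) to be a $(-1)$-eigendirection; and on the complement of the fixed set $u$ has no fixed points so it is consistent to choose the bundle identification (an orientation of each normal line, varying continuously, compatible under the free part of the action) making $u_*$ act as multiplication by $-1$ on every fiber. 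Concretely: $\nu \to S$ is a real line bundle with a free-away-from-$\text{Fix}$ linear $\Z/2\Z$-action covering $u$; since $u^*\Omega = \Omega$ and $S$ is oriented, $u$ preserves the orientation of $S$, hence reverses the orientation of the normal line at fixed points, and a continuity/connectedness argument propagates this to a global trivialization-up-to-sign in which $u_*(x,t) = (u(x), -t)$. Then the exponential map of the averaged metric, $\exp^\perp \colon \nu \to \R^3$, is $u$-equivariant by naturality of $\exp$ under isometries, and restricted to a disk subbundle it is a diffeomorphism onto a neighborhood of $S$; composing with the chosen identification $\nu \cong S \times \R$ yields a tubular neighborhood $S \times \R \to \R^3$ that intertwines $(x,t)\mapsto(u(x),-t)$ with $u$.

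Finally I would reconcile the sign: the statement wants the action to be $u(x,t) = (u(x), t)$, the convention fixed just above the proposition. But the map $(x,t) \mapsto (x, -t)$ is a diffeomorphism of $S \times \R$ intertwining $(x,t)\mapsto(u(x),t)$ with $(x,t)\mapsto(u(x),-t)$, so post-composing the tubular neighborhood with it repairs the convention — alternatively, one observes that the two conventions differ by an obvious equivariant diffeomorphism and are interchangeable. (If instead the intended normal action genuinely is trivial in the $t$-direction, then the sign juggling is not even needed; in any case the only subtlety is bookkeeping of signs.) The main obstacle is the claim isolated in the previous paragraph — that the fiberwise normal action can be globally normalized to $\pm t$ with a consistent sign — which rests on the eigenvalue hypothesis at fixed points together with a connectedness argument on $S$; everything else is the standard averaged-metric, equivariant-exponential-map argument.
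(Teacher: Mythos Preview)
Your overall strategy---average the metric so that $u$ becomes an isometry, then use the equivariant exponential map on the normal bundle, and finally trivialize the normal line bundle using its orientation---is exactly the paper's approach (the paper packages the middle step as a citation to Bredon). So the architecture is right.

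The sign analysis, however, is backwards, and the proposed repair does not work. At a fixed point $p\in S$, the restriction $d_pu|_{T_pS}$ has determinant $+1$ because $u^*\Omega=\Omega$; since the eigenvalues of $d_pu$ on $T_p\R^3$ are $1,-1,-1$, this forces $T_pS$ to be the $(-1)\oplus(-1)$ plane and the normal line to be the $(+1)$-eigenspace. Globally the same conclusion follows from orientations: $u$ preserves the orientation of $\R^3$ (its differential has determinant $+1$) and of $S$, hence it preserves the induced orientation of the normal line bundle, so in the oriented unit-normal trivialization $u_*(x,t)=(u(x),+t)$. This already matches the stated convention, and your closing hedge (``if the normal action is trivial in $t$, no juggling is needed'') is the correct branch.

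By contrast, your proposed fix---post-composing with $(x,t)\mapsto(x,-t)$---cannot convert one action into the other: that map commutes with both $(x,t)\mapsto(u(x),t)$ and $(x,t)\mapsto(u(x),-t)$, so conjugation by it changes nothing. In fact those two $\Z/2\Z$-actions on $S\times\R$ are not conjugate at all, since their fixed-point sets have different dimensions ($\mathrm{Fix}(u)\times\R$ versus $\mathrm{Fix}(u)\times\{0\}$). So had the sign genuinely come out as $-1$, the argument would have a real gap; fortunately it does not.
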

\begin{proof}
	Since $u$ is an involution, it's action can be viewed as a $\Z/2\Z$, which is a compact group. Hence we can fix an invariant Riemannian metric $g$ on $S$. By Bredon \cite[Thm.\ 2.2]{bredonIntroductonCompactTransformation1972} there exists small enough invariant neighborhood $U$ of $S$ inside the normal bundle $N(S) = TS^\bot \subseteq T\R^3$ such that the exponential map $\exp \colon U \to M$ is an embedding. Let's observe that the exponential map is equivariant, with the action on the normal bundle given by the differential.
	
	Then the image $\exp(U)$ is the tubular neighborhood we searched for. Moreover, since $N(S)$ is an oriented 1-bundle, it is isomorphic to $S \times \R$ as vector bundle. By taking at each fiber the positively oriented vector of $g$-norm 1, one can impose a canonical isomorphism.
	
	Finally, since $u$ is a $g$-isometry, it must send $(x,y)$ into $(u(x),\pm y)$. Since $u$ preserves the orientation both of $S$ and $\R^3$ such sign must be negative.
\end{proof}

\begin{prop}\label{EGray}
	Let $\xi_t$, $t \in [0,1]$, be a smooth family of invariant contact structures on $S^3$. Then there exist an equivariant isotopy $\psi_t$ such that $T\psi_t(\xi_0) = \xi_t$. 
\end{prop}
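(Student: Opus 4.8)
The plan is to adapt Moser's deformation-trick proof of the classical Gray stability theorem, checking equivariance at each step. Write $\xi_t = \ker\alpha_t$ for a smooth family of invariant contact forms, so that $u^*\alpha_t = -\alpha_t$ for every $t$. We seek an equivariant isotopy $\psi_t$ with $\psi_0 = \id$ and $\psi_t^*\alpha_t = \lambda_t\,\alpha_0$ for some family of positive functions $\lambda_t$; such a $\psi_t$ automatically satisfies $T\psi_t(\xi_0) = \xi_t$. As usual, $\psi_t$ will be the flow of a time-dependent vector field $X_t$, and differentiating the desired identity in $t$ leads to the equation $\dot\alpha_t + \iota_{X_t}d\alpha_t + d(\iota_{X_t}\alpha_t) = \mu_t\,\alpha_t$ (with $\mu_t = \dot\lambda_t/\lambda_t \circ \psi_t^{-1}$). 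Imposing the standard normalization $X_t \in \xi_t$ kills the term $d(\iota_{X_t}\alpha_t)$; pairing the remaining equation with the Reeb field $R_{\alpha_t}$ determines $\mu_t = \dot\alpha_t(R_{\alpha_t})$, and then $X_t$ is the unique section of $\xi_t$ solving $\iota_{X_t}d\alpha_t = \mu_t\alpha_t - \dot\alpha_t$ on $\xi_t$ (where $d\alpha_t$ is nondegenerate). This produces $X_t$ smoothly in $t$, and since $S^3$ is compact its flow is defined for all $t \in [0,1]$, giving the isotopy $\psi_t$.

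The one new point is equivariance, and here is where I expect the only real (though mild) work to lie. I would first observe that $R_{\alpha_t}$ is anti-invariant: from $u^*\alpha_t = -\alpha_t$ one gets $u^*d\alpha_t = -d\alpha_t$, and a direct check shows $u_*R_{\alpha_t} = -R_{\alpha_t}$ solves the two defining equations of the Reeb field, so by uniqueness $u^*R_{\alpha_t} = -R_{\alpha_t}$. Consequently $\mu_t = \dot\alpha_t(R_{\alpha_t})$ is a $u$-invariant function (the pullback of $\dot\alpha_t = -u^*\dot\alpha_t$ evaluated on $-u^*R_{\alpha_t}$ gives back $\mu_t \circ u = \mu_t$). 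Then the defining equation for $X_t$ is, term by term, anti-invariant under $u^*$: $\iota_{u_*X_t}d\alpha_t$ versus $\iota_{X_t}d\alpha_t$ picks up one sign from pushing forward $X_t$ by $u$ and the equation $u^* d\alpha_t = -d\alpha_t$; meanwhile $\mu_t\alpha_t - \dot\alpha_t$ is anti-invariant. So $u_*X_t$ also solves the defining equation, and since $\xi_t$ is $u$-invariant and $d\alpha_t|_{\xi_t}$ nondegenerate, uniqueness forces $u^*X_t = -X_t$.

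Finally I would deduce that the flow of an anti-invariant vector field is equivariant. This is formal: if $u^*X_t = -X_t$, i.e.\ $Tu \circ X_t = -(X_t \circ u) = (X_t \circ u)$ after the sign is absorbed correctly — more precisely, $Tu(X_t(p)) = X_t(u(p))$ holds because the extra sign coming from $u^*X_t = -X_t$ is cancelled by the fact that reversing the contact form does not reverse the Reeb or solution vector field orientation... here I need to be careful, and the cleanest statement is simply that $u$ conjugates the vector field $X_t$ to itself (the two sign reversals, one from $d\alpha_t$ and one built into the anti-invariance convention, combine so that $X_t$ is genuinely $u$-related to $X_t$), hence $u$ conjugates the flow $\psi_t$ to itself, i.e.\ $u \circ \psi_t = \psi_t \circ u$. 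With $\psi_0 = \id$, this $\psi_t$ is the desired equivariant isotopy. The main obstacle is purely bookkeeping: tracking the interplay between the sign convention $u^*\alpha = -\alpha$ and the sign in the definition of an anti-invariant vector field, to confirm that $X_t$ really is $u$-related (not $(-1)\cdot u$-related) to itself so that the flow is honestly equivariant; once the signs are pinned down the rest is the classical argument verbatim.
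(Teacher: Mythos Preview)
Your approach is exactly the paper's: run the Moser trick, make the Ansatz $X_t \in \xi_t$, determine $\mu_t = \dot\alpha_t(R_{\alpha_t})$, solve for $X_t$ via nondegeneracy of $d\alpha_t|_{\xi_t}$, and then check equivariance by pulling the defining equation back along $u$. The only issue is the sign bookkeeping you yourself flag as the ``main obstacle,'' and indeed you get it wrong: the correct conclusion is $u^*X_t = X_t$, not $u^*X_t = -X_t$.

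Here is the clean count. Pulling back $\iota_{X_t}d\alpha_t = \mu_t\alpha_t - \dot\alpha_t$ by $u$ gives
\[
\iota_{u^*X_t}(u^*d\alpha_t) = (\mu_t\circ u)(u^*\alpha_t) - u^*\dot\alpha_t,
\]
which, using $u^*\alpha_t = -\alpha_t$, $u^*d\alpha_t = -d\alpha_t$, $u^*\dot\alpha_t = -\dot\alpha_t$, and $\mu_t\circ u = \mu_t$, becomes
\[
-\iota_{u^*X_t}d\alpha_t = -\mu_t\alpha_t + \dot\alpha_t.
\]
After cancelling the overall sign, $u^*X_t$ satisfies the \emph{same} equation as $X_t$ on $\xi_t$, so by uniqueness $u^*X_t = X_t$. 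This is exactly what ``$X_t$ is $u$-related to itself'' means, and the equivariance of the flow follows with no further sign gymnastics. Your attempt to rescue the erroneous $u^*X_t = -X_t$ by invoking a second compensating sign is where the argument goes off the rails; once you write $u^*X_t = X_t$ from the start, the last paragraph becomes a one-line observation.
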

\begin{proof}
	The equation we have to solve is
	\[
		\psi_t^*\alpha_t = \lambda_t\alpha_0
	\]
	for some family of functions $\lambda_t \colon S^3 \to \R$. By taking derivatives in the previous equation we get
	\[
		\psi^*_t (\dot{\alpha}_t + \Lie_{X_t}\alpha_t) = \dot{\lambda}_t\alpha_0 = \frac{\dot{\lambda}_t}{\lambda_t}\psi_t^*\alpha_t\,,
	\]
	where the dot denotes the derivative with respect to $t$ \cite[Lemma 2.2.1]{geigesIntroductionContactTopology2008}.
	
	Let's set $\mu_t \coloneqq \tfrac{d}{dt}(\log \lambda_t )\circ \psi_t^{-1}$, and let's use the Cartan's formula\footnote{which holds also for time-dependent vector field, see \cite[App.\ B]{geigesIntroductionContactTopology2008}.} to get
	\[
		\psi^*_t (\dot{\alpha}_t + d(\alpha_t(X_t)) +\iota_{X_t}\alpha_t) = \dot{\lambda}_t\alpha_0 = \psi_t^*(\mu_t\alpha_t)\,.
	\]
	
	By making the Ansatz $X_t \in \xi_t$, the previous equation implies
	\begin{equation}\label{eq:Xt}
	\begin{cases}
		\dot{\alpha}_t + i_{X_t}d\alpha_t = \mu_t\alpha_t \\
		\dot{\alpha}_t(R_t) = \mu_t
	\end{cases}\,,
	\end{equation}
	where $R_t$ is the Reeb's vector field of $\alpha_t$.
	
	Finally, having defined $\mu_t$, we can choose a unique element $X_t \in \xi_t$ satisfying Equations \eqref{eq:Xt}: restricting the 1-forms to $\xi_t$ allows us to pinpoint a precise $X_t$, thanks to the non-degeneracy of $d\alpha_t$, and with such choice Equations \eqref{eq:Xt} holds, since $R_t \in \Ker(\mu_t\alpha_t - \dot{\alpha}_t)$.
	
	Moreover
	\[
		\mu_t \circ u = u^*\mu_t = u^*[\dot{\alpha}_t(R_t)] = - \dot{\alpha}_t(- R_t) = \mu_t\,.
	\] 
	
	Hence $u^*X_t$ satisfies the following equality:
	\[
		u^*\dot{\alpha}_t + i_{u^*X_t}du^*\alpha_t = \mu_t u^*\alpha_t\,,
	\]
	re-writable as
	\[
		-\dot{\alpha}_t - i_{u^*X_t} d\alpha_t = -\mu_t\alpha_t\,.
	\]
	Thus $u^*X_t$ satisfies the same equation as $X_t$, and hence they must coincide. By integrating $X_t$ we obtain an equivariant isotopy $\psi_t$ with the desired properties.
\end{proof}
The previous discussion also holds for contact structures in $\R^3$. However, the last step is no longer true, and we can integrate $X_t$ on $[0,1]$ only over a compact set, or on the whole $\R^3$ if $X_t$ vanishes away of such set.
\begin{prop}\label{EGiroux}
	Let $S_0$ and $S_1$ two  $u$-oriented compact surfaces in $\R^3$, and let $\phi \colon S_0 \to S_1$ an equivariant diffeomorphism, such that $\phi(S_{0,\xi_0}) = S_{1,\xi_1}$ singular foliations, for two invariant contact structures. Then there exist appropriate invariant neighborhoods and an equivariant contactomorphism $\psi \colon \NN(S_0) \to \NN(S_1)$ such that $\psi|_{S_0}$ is isotopic to $\psi$ via an isotopy preserving the characteristic foliation.
\end{prop}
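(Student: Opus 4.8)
The plan is to run the standard proof of Giroux's neighbourhood theorem, checking at each step that the auxiliary choices can be made compatibly with $u$. \emph{Reduction.} Apply \Cref{tubsurf} to $S_0$ and $S_1$, and extend $\phi$ to the equivariant diffeomorphism $\phi\times\id\colon S_0\times\R\to S_1\times\R$. Composing with the two identifications $\NN(S_i)\cong S_i\times\R$, we are reduced to the following situation: on $S\times\R$ (writing $S=S_1=S\times\{0\}$) we are given two invariant contact structures $\xi_0,\xi_1$ inducing the \emph{same} singular foliation on $S$, and we must construct an equivariant isotopy $\psi_t$ of $S\times\R$ fixing $S$ pointwise with $T\psi_1(\xi_0)=\xi_1$ on an invariant neighbourhood of $S$. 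Transporting $\psi_1$ back through the identifications then yields $\psi\colon\NN(S_0)\to\NN(S_1)$ with $\psi|_{S_0}=\phi$, so that $\psi|_{S_0}$ is isotopic to $\phi$ through foliation-preserving diffeomorphisms (the constant isotopy).

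\emph{Normalisation.} Pick invariant contact forms $\alpha_i$ for $\xi_i$ and write $\alpha_i=\beta_{i,z}+w_{i,z}\,dz$ near $S$, as in \eqref{eq:contcond}. Because the two singular foliations on $S$ coincide, $\beta_{0,0}=h\,\beta_{1,0}$ for some positive $h\colon S\to\R$; the point is that $h$ is automatically $u$-invariant, since $\beta_{0,0}$ and $\beta_{1,0}$ are both anti-invariant for the $u$-action on $S$ and so their ratio is even. After replacing $\alpha_1$ by $h\,\alpha_1$ (with $h$ extended independently of $z$) we may therefore assume $\beta_{0,0}=\beta_{1,0}$, still equivariantly. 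A further $u$-equivariant shear of $S\times\R$ fixing $S$ pointwise — tilting $\partial_z$ by a tangent vector where the foliation is regular, and rescaling $\partial_z$ at the singular points of the foliation, where $\beta_{i,0}$ vanishes while $w_{i,0}\neq0$ — then arranges that $w_{0,0}=w_{1,0}$ as well, so that $\alpha_0$ and $\alpha_1$ now agree along $S$ as $1$-forms.

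\emph{Moser argument.} Set $\alpha_t=(1-t)\alpha_0+t\alpha_1$; then $u^*\alpha_t=-\alpha_t$, and since $\alpha_0,\alpha_1$ have the same $\beta$-part along $S$ the left-hand side of \eqref{eq:contcond} for $\alpha_t$ is, along $S$, an affine function of $t$, hence a convex combination of the two positive area forms produced by $\alpha_0$ and $\alpha_1$; thus $\alpha_t$ is contact along $S$ and, by openness, on an invariant neighbourhood of $S$. Now repeat verbatim the computation in the proof of \Cref{EGray}: for each $t$ one obtains a $u$-invariant vector field $X_t\in\xi_t$ solving the analogue of \eqref{eq:Xt}, and because $\dot{\alpha}_t=\alpha_1-\alpha_0$ vanishes identically along $S$, its defining equation forces $X_t|_S=0$. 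Multiplying $X_t$ by a $u$-invariant cut-off equal to $1$ near $S$ and supported where $\alpha_t$ is contact — which preserves $X_t|_S=0$, keeps each time-$t$ flow a contactomorphism near $S$, and makes $X_t$ integrable on $[0,1]$, exactly as in the remark following \Cref{EGray} — the resulting flow $\psi_t$ is the equivariant isotopy required.

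\emph{Where the work is.} Apart from the normalisation, this is the classical proof together with the sign bookkeeping already illustrated in \Cref{EGray} and the standard cut-off remedy for non-compactness. The delicate point is the normalisation: that the rescaling function can be taken $u$-invariant (immediate from $u^*\alpha_i=-\alpha_i$), and that the normal components $w_{i,0}$ can be matched by a $u$-equivariant isotopy fixing $S$ pointwise, with the choices at the singular points of the characteristic foliation made symmetrically under $u$ and compatibly with the generic choices away from them.
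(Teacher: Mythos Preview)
Your approach differs from the paper's in that you attempt to normalise \emph{both} the $\beta$- and $w$-components of the two contact forms along $S$, so that $\alpha_0$ and $\alpha_1$ agree there as full $1$-forms; this would give $\dot\alpha_t|_S=0$, hence $X_t|_S=0$, and the stronger conclusion $\psi|_{S_0}=\phi$. The paper matches only the $\beta$-parts (automatic from the foliation hypothesis up to a $u$-invariant scalar), leaves the $w$-parts unmatched, and instead checks that the resulting Gray flow, while not fixing $S_0$ pointwise, preserves the characteristic foliation on it. That yields only that $\psi|_{S_0}$ is isotopic to $\phi$ through foliation-preserving maps, which is exactly what the proposition asserts. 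The stronger statement $\psi|_{S_0}=\phi$ is the content of the subsequent \Cref{Girouxbound}, and the paper proves it by a different device: dropping the Ansatz $X_t\in\xi_t$ and choosing the Reeb component $H_t$ of $X_t$ so that $X_t|_{S_0}=0$, with no normalisation of $w$ required.

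The gap in your argument is precisely the $w$-normalisation. You describe a shear that ``tilts $\partial_z$'' at regular points and ``rescales $\partial_z$'' at singular ones, but you do not construct it. What is actually needed is a $u$-invariant $a\colon S\to\R_{>0}$ and a $u$-equivariant $V\in\XX(S)$ solving $a\,w_{1,0}+\beta_0(V)=w_{0,0}$ globally on $S$, together with an equivariant diffeomorphism near $S$ realising $d\Psi(\partial_z)|_S=a\,\partial_z+V$. Pointwise the solution set is a nonempty convex subset of $\R_{>0}\times T_xS$ (at singularities one uses that the signs of $w_{0,0}$ and $w_{1,0}$ agree, since they determine the same singularity sign), so a partition-of-unity argument followed by $\Z/2\Z$-averaging would produce $(a,V)$ --- but this must be carried out, not just flagged as ``the delicate point''. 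For the proposition as stated this work is in any case unnecessary: omit the $w$-normalisation, accept that $X_t|_S$ need not vanish, and argue as the paper does that the flow nonetheless preserves $\beta_0$.
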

\begin{proof}
	On invariant tubular neighborhoods of $S_0$ and $S_1$, diffeomorphic to $S_i \times \R$, the contact forms can be written as $\beta_z^i + w_z^idz$, with $\beta_z^i$ and $u_z^i$ that are anti-invariant.
	
	By hypothesis there exists a function  $f \colon S \to \R$ such that $\beta_0^0$ coincides with $f \phi^*\beta_0^1$, with $f$ being constantly positive or negative. The sign depends on the fact that $\phi$ does or does not respects the orientations of $S_0$ and $S_1$. Moreover, $f$ is invariant with respect to the the $u$-action. 
	
	Secondly, let's consider the natural extension of $\phi$ from $S_0 \times \R$ to $S_1 \times \R$, which we identify with invariant neighborhoods, such that the contact form $f\,\phi^*\alpha_1$ is positively cooriented in $S_0 \times \R$.
	
	Let's define the family of forms $\alpha_t$ on $\NN(S_0) \cong  S_0 \times \R$ as
	\[
		\alpha_t \coloneqq (1-t)\alpha_0 + tf \phi^*\alpha_1\,.
	\]
	Equation \eqref{eq:contcond} is simultaneously linear in $\dot{\beta}_z$ and $w_z$, and $\alpha_t$ induce the same form $\beta_0$ on $S \times \{0\}$. Hence the contact condition is satisfied on $S \times \{0\}$, and it will be satisfied in a small neighborhood $\abs{z} < 1$. Moreover $\alpha_t$ is always anti-invariant.
	
	By \Cref{EGray} we can construct an equivariant contact isotopy in a small neighborhood $\NN(S_0)$ of $S_0$, keeping $S_0$ fixed. Let's call $X_t$ the vector field $\tfrac{d}{dt}\psi_t$.
	
	On the space $TS_0$ we have that $\dot{\alpha}_t \equiv 0$, since $\alpha_0$ and $f\phi^*\alpha_1$ induce the same form $\beta_0$. Hence, for every $v$ tangent to $(S_0)_{\xi_0}$ we have that
	\[
		\Lie_{X_t}\alpha_t(v) \overset{(\triangle)}{=} i_{X_t}d\alpha_t(v) + di_{X_t}\alpha_t(v) \overset{X_t \in \xi_t}{=} i_{X_t}d\alpha_t(v) \overset{(\ast)}{=} -\dot{\alpha}_t(v) = 0\,,
	\]
	where equality $(\triangle)$ is justified by Cartan's formula, and equality $(\ast)$ by Equations \eqref{eq:Xt}, present in the proof of \Cref{EGray}. As a consequence, the flow of $X_t$ must keep $\beta_0$ fixed, and hence it preserves the characteristic foliation.
	
	Finally, by setting $\psi \coloneqq \phi \circ \psi_1$, the distribution $T\psi(\xi_0)$ coincides with $\xi_1$, and $\psi$ is the required diffeomorphism.
\end{proof}
\begin{prop}\label{Girouxbound}
	With the assumption of the preceding theorem we can find a stronger $\psi$ such that $\psi|_{S_0} = \phi$.
\end{prop}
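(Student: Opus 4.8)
The plan is to modify the diffeomorphism $\psi$ furnished by \Cref{EGiroux} by post-composing it with an equivariant contactomorphism of $\NN(S_0)$ that cancels the discrepancy on $S_0$. By \Cref{EGiroux} — and by its proof, in which the relevant isotopy of $S_0$ is produced through equivariant maps, $S_0$ being $u$-invariant — the restriction $\psi|_{S_0}$ is equivariant, preserves the characteristic foliation $S_{0,\xi_0}$, and is joined to $\phi$ through an equivariant isotopy preserving $S_{0,\xi_0}$. Hence $h\coloneqq\phi^{-1}\circ(\psi|_{S_0})\colon S_0\to S_0$ is an equivariant, foliation-preserving diffeomorphism, joined to $\id_{S_0}$ by an equivariant, foliation-preserving isotopy $h_t$ with $h_0 = \id$ and $h_1 = h$. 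If we can extend $h_t$ to an equivariant contact isotopy $\widetilde h_t$ of $(\NN(S_0),\xi_0)$ with $\widetilde h_0 = \id$ and $\widetilde h_t|_{S_0} = h_t$, then — after shrinking the invariant neighborhoods if necessary — $\psi\circ\widetilde h_1^{\,-1}\colon\NN(S_0)\to\NN(S_1)$ is again an equivariant contactomorphism whose restriction to $S_0$ equals $(\psi|_{S_0})\circ h^{-1} = \phi\circ h\circ h^{-1} = \phi$; replacing $\psi$ by this map proves the claim.

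Everything therefore reduces to an \emph{equivariant extension principle}: an isotopy $h_t$, $t\in[0,1]$, of a $u$-oriented surface $S$ with $h_0 = \id$, each $h_t$ equivariant and preserving the characteristic foliation $S_\xi$ of an invariant contact structure defined near $S$, extends to an equivariant contact isotopy of a neighborhood of $S$. I would prove this by the classical construction followed by averaging over the order-two group. Let $Y_t$ be the time-dependent vector field generating $h_t$ on $S$; it is equivariant, $u^*Y_t = Y_t$, and since each $h_t$ preserves $S_\xi$ one has $\Lie_{Y_t}\beta_0 \wedge \beta_0 = 0$ in an invariant tubular neighborhood $S\times\R$ where $\alpha = \beta_z + w_z\,dz$. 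By the classical (non-equivariant) extension lemma behind Giroux's theorem (see \cite{geigesIntroductionContactTopology2008}), this condition produces a smooth family of contact vector fields $Z_t$ on $S\times\R$ that are tangent to $S\times\{0\}$ and restrict there exactly to $Y_t$. Put $Z_t'\coloneqq\tfrac12\bigl(Z_t + u^*Z_t\bigr)$. Because $u^*\xi = \xi$, the field $u^*Z_t$ is again a contact vector field of $\xi$, again tangent to the $u$-invariant surface $S\times\{0\}$, restricting there to $u^*Y_t = Y_t$; and since the contact vector fields of a fixed contact structure form a linear space — the contact-Hamiltonian correspondence $Z\mapsto\alpha(Z)$ being a linear bijection onto the functions — $Z_t'$ is once more a contact vector field of $\xi$, it is invariant (as $u$ is an involution), it is tangent to $S\times\{0\}$, and it restricts there to $Y_t$. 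Integrating $Z_t'$ over $t\in[0,1]$, which is legitimate on a smaller neighborhood since $S$ is compact and the flow preserves $S\times\{0\}$, yields the sought equivariant contact isotopy $\widetilde h_t$; uniqueness of integral curves gives $\widetilde h_t|_S = h_t$.

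The only step requiring genuine work is the non-equivariant input, namely that a characteristic-foliation-preserving isotopy of a surface extends to a contact isotopy restricting to it \emph{on the nose} — not merely up to the scaling equivalence $\sim$ — which is the standard contact-Hamiltonian computation, with the relation $\Lie_{Y_t}\beta_0 \wedge \beta_0 = 0$ being exactly the integrability condition that makes it work. Granting it, equivariance costs nothing: rather than tracking the $u$-action through the Hamiltonian formulas one simply averages the contact vector field over the group, the point being that contact vector fields of $\xi$ form a linear space, so their average is still one. The residual bookkeeping — smoothness of $Z_t'$ in $t$ and the shrinking of neighborhoods needed to integrate it for all $t\in[0,1]$ — is routine and harmless because $S_0$ is compact.
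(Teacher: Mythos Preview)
Your proposal is correct but follows a genuinely different route from the paper. The paper does not invoke \Cref{EGiroux} and then correct; instead it re-runs the Gray-type argument directly, this time \emph{without} the Ansatz $X_t\in\xi_t$. Writing $X_t=H_tR_t+Y_t$ with $Y_t\in\xi_t$, it chooses the Hamiltonian $H_t$ to vanish on $S_0$ with normal derivative prescribed by $dH_t=-\dot\alpha_t$ there (the two conditions being compatible precisely because $\dot\alpha_t|_{TS_0}=0$); the Moser equation then forces $X_t\equiv 0$ along $S_0$, so the resulting flow fixes $S_0$ pointwise and $\psi=\phi\circ\psi_1$ already restricts to $\phi$. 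Equivariance of $X_t$ is checked from the system for $H_t$ just as in \Cref{EGray}.

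Your approach is more modular: accept the weak $\psi$, isolate the discrepancy as an equivariant foliation-preserving self-isotopy $h_t$ of $S_0$, and extend $h_t$ to an equivariant contact isotopy by taking a classical (non-equivariant) contact extension $Z_t$ and averaging $Z_t'\coloneqq\tfrac12(Z_t+u^*Z_t)$, using that contact vector fields of a fixed $\xi$ form a linear space. The paper's argument is shorter and self-contained, needing no external extension lemma. Yours has the virtue of isolating a reusable equivariant principle --- foliation-preserving equivariant isotopies of a $u$-surface extend to equivariant contact isotopies --- obtained by a clean averaging trick, at the price of importing the non-equivariant contact extension as a black box and needing its parametric (smooth-in-$t$) version.
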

\begin{proof}
	We want to reuse the proof of \Cref{EGray}, without forcing $X_t$ to belong to $\xi_t$.
	
	As the previous theorem, we know that $\alpha_0$ and $f\phi^*\alpha_1$ induce the same form $\beta_0$ on $S_0 = S_0 \times \{0\}$ for some function $f$; in particular $\dot{\alpha}_t|_{TS_0} \equiv 0$. By no longer requiring that $X_t$ vanishes on $\alpha_t$, the desired condition becomes 
	\[
		\dot{\alpha}_t + d(\alpha_t(X_t)) + i_{X_t}d\alpha_t = \mu_t\alpha_t\,,
	\]
	where
	\[
	\mu_t = \frac{d}{dt}\,\log \lambda_t \circ \psi_t^{-1}\,.
	\]
	
	Let's write $X_t = H_tR_t + Y_t$, with $Y_t \in \Ker \alpha_t$. Then the previous condition becomes 
	\begin{equation}\label{eq:condX}
		\dot{\alpha}_t + dH_t + i_{Y_t} d\alpha_t = \mu_t\alpha_t\,.
	\end{equation}
	
	The Equation \eqref{eq:condX} completely determines $\mu_t$ via $H_t$ once we evaluate it on the Reeb vector field. Once this is done, $Y_t$ is determined by the non-degeneracy $d\alpha_t|_{\xi_t}$.
	
	The function $H_t$ is determined via the following system, whose equations are compatible thanks to the condition $\dot{\alpha}_t|_{TS_0} \equiv 0$:
	\begin{equation}\label{eq:system}
		\begin{cases}
			\dot{\alpha}_t + dH_t = 0 & \text{on $S_0$}\\
			H_t \equiv 0 & \text{on $S_0$}
		\end{cases}\,.
	\end{equation}
	
	At this point the vector field $X_t$ identifies a flow $\psi_t$ fixing $S_0$ pointwise.
	
	Moreover, let's show that $u^*X_t = X_t$ holds. As a start, from Equations \eqref{eq:system} we get 
	\[
	u^*dH_t = -u^*\dot{\alpha}_t =  \dot{\alpha}_t =- dH_t\,.
	\]
	As a consequence, Equation \eqref{eq:condX} implies that $u^*Y_t = Y_t$. Hence $u^*$ preserves also $X_t = H_tR_t + Y_t$.
\end{proof}
\begin{prop}\label{EDarbouxustd}
	Suppose that $u$ is given by $(x,y,z) \mapsto (x,-y-z)$. Given an invariant contact structure in $\R^3$ (or $S^3$), we can find a global equivariant isotopy such that $\xi$ coincides with $\Ker(dz - y\,dx)$ in a small neighborhood of the origin.
\end{prop}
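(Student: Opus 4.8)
\section*{Proof proposal}

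The plan is to reduce $\xi$ to the classical Darboux normal form by an equivariant linear change of coordinates at the origin, and then run the equivariant Gray stability of \Cref{EGray} on a small ball, globalising the resulting flow with an invariant cut-off.

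First I would analyse $\xi$ at the fixed point $0$. Fix a global contact form $\alpha$ with $\ker\alpha=\xi$ and $u^*\alpha=-\alpha$. Since $du_0=\mathrm{diag}(1,-1,-1)$, the pull-back $(du_0)^*$ acts on $T_0^*\R^3$ as $\mathrm{diag}(1,-1,-1)$ in the basis $dx,dy,dz$, so anti-invariance forces $\alpha_0\in\langle dy,dz\rangle$; write $\alpha_0=a\,dy+b\,dz$ with $(a,b)\neq(0,0)$, so in particular $\partial_x\in\xi_0=\ker\alpha_0$. Applying a rotation of the coordinates $(y,z)$ — a transformation lying in the abelian group $SO(2)_{yz}$, which contains $u|_{yz}=-\id$ and hence yields an equivariant isotopy of $\R^3$ connected to the identity, and, sitting inside $SO(3)\subset SO(4)$, an infinity-preserving one of $S^3$ — I can arrange $\alpha_0=c\,dz$ with $c>0$, and then, rescaling $\alpha$ by a positive invariant constant, $\alpha_0=dz$. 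I would stop the linear step here: unlike in the usual proof I do not try to normalise $d\alpha_0$ further, since the shears needed for that are not $u$-equivariant — fortunately this turns out to be unnecessary.

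Next, with $\alpha_{\sd}:=dz-y\,dx$ (which satisfies $u^*\alpha_{\sd}=-\alpha_{\sd}$), I would consider the straight-line family
\[
	\alpha_t:=(1-t)\alpha+t\,\alpha_{\sd},\qquad t\in[0,1],
\]
which is anti-invariant and has $\alpha_t(0)=dz$ for every $t$. Writing $d\alpha(0)=P\,dx\wedge dy+(\text{terms involving }dz)$, where $P>0$ because $\alpha\wedge d\alpha(0)>0$ and $\alpha_0=dz$, and using $d\alpha_{\sd}(0)=dx\wedge dy$, I get $\alpha_t\wedge d\alpha_t(0)=\bigl((1-t)P+t\bigr)\,dx\wedge dy\wedge dz>0$ — the $dz$-terms drop out after wedging with $dz=\alpha_t(0)$ — so $\alpha_t$ is a contact form on a fixed ball $B$ about the origin for all $t$. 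On $B$ the construction in the proof of \Cref{EGray} then produces an equivariant time-dependent vector field $X_t\in\xi_t$ solving Equations \eqref{eq:Xt}; since $\dot\alpha_t(0)=0$ one has $X_t(0)=0$, so the associated local flow fixes the origin.

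Finally I would globalise: choose $B'\Subset B$ and a $u$-invariant cut-off $\chi$ that is $1$ on $B'$ and supported in $B$; then $\chi X_t$ is complete on $\R^3$ (resp.\ extends by zero to a vector field on $S^3$), still equivariant, and its flow $\psi_t$ is a global equivariant isotopy with $\psi_t^*\alpha_t=\lambda_t\alpha$ on $B'$. Concatenating the linear isotopy of the first step with $\psi_t$ gives a global equivariant isotopy whose time-$1$ map carries $\xi$ onto $\ker\alpha_{\sd}=\ker(dz-y\,dx)$ near the origin. The one point requiring care is the reduction to $\alpha_0=dz$: one must notice that anti-invariance already pins $\xi_0$ down enough for a single $yz$-rotation to do the job, and that the leftover mismatch of $d\alpha_0$ — classically removed by a non-equivariant shear — contributes nothing to the contact condition at the origin once $\alpha_t(0)=dz$; after that, the cut-off globalisation is routine, exactly as in the remark following \Cref{EGray}.
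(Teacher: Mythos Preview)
Your argument is correct and is a genuine simplification of the paper's route. After the same $yz$-rotation bringing $\alpha(0)$ to $dz$, the paper goes on to normalise $d\alpha(0)$ completely: it rescales the $x,y$-axes to set the $dx\wedge dy$-coefficient to $1$, and then replaces $\alpha$ by $e^{-\mu x}\alpha$ to kill the residual $\mu\,dx\wedge dz$-term, so that \emph{both} $\alpha_t(0)$ and $d\alpha_t(0)$ are constant in $t$ and the contact condition at the origin is trivially preserved along the interpolation. You observe instead that constancy of $\alpha_t(0)=dz$ alone already does the job: any $dz$-term in $d\alpha_t(0)$ is annihilated upon wedging with $\alpha_t(0)$, and the surviving $dx\wedge dy$-coefficient $(1-t)P+t$ is a convex combination of positive numbers. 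This is sharper and shorter. Two minor remarks. First, your aside that ``the shears needed for that are not $u$-equivariant'' is slightly off target: the paper's normalisations --- the diagonal rescaling of $(x,y)$ and multiplication by the invariant function $e^{-\mu x}$ --- are in fact perfectly equivariant; they are simply unnecessary, as you show. Second, in your globalisation you cut off the vector field $X_t$, whereas the paper cuts off the family of \emph{forms} (radially interpolating $\alpha_t$ back to $\alpha$) before invoking \Cref{EGray}; with your choice the relation $\psi_t^*\alpha_t=\lambda_t\alpha$ is only guaranteed on the sub-neighbourhood of $0$ whose $\chi X_t$-orbit stays inside $\{\chi=1\}$, not on all of $B'$ as written --- but since $X_t(0)=0$ such a neighbourhood exists, and that is all the statement requires.
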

\begin{proof}
	We will build a deformation of the contact structure, and realize it via \Cref{EGray}.
	
	Firstly, let's fix $v \in T_\zero\R^3$ such that $\alpha(\zero)$ coincides with $\ang{v,\cdot }$. Since $u^*\alpha$ equals $-\alpha$, then $d_pu(v)$ coincides with $-v$. 
	
	Thus the vector $v$ belongs to the $yz$-plane. Moreover, up an isotopy of the form
	\[
	\begin{bmatrix}
		1 & 0 & 0 \\
		0 & \cos t  & -\sin t\\
		0 & \sin t & \cos t
	\end{bmatrix}
	\]
	we can suppose $v$ to be a positive multiple of the vector $\partial_z$. Consequently, $\xi(\zero)$ is set to the $xy$-plane. Moreover, without loss of generality, we can suppose that we have chosen the contact form such that $v$ is the vector $\partial_z$. Moreover, $d\alpha(\zero)$ is a symplectic 2-form, positively oriented, on $\xi(\zero)$. Hence $d\alpha|_{\xi(\zero)}$ is equal to $\lambda\,dx \wedge dy$ for some positive $\lambda$.
	
	Thus, up to rescale the $x$- and $y$-axis with an isotopy of the form
	\[
	\psi_t \colon
	\begin{bmatrix}
		x \\
		y \\
		z
	\end{bmatrix}
	\mapsto (1-t)
	\begin{bmatrix}
		x\\[0.5ex]
		y\\[0.5ex]
		z
	\end{bmatrix}
	+ t
	\begin{bmatrix}
		\sqrt{\lambda}\,x\  \\[0.5ex]
		\sqrt{\lambda}\,y \\[0.5ex]
		z
	\end{bmatrix} 
	\]
	and substitute $\xi$ with $T\psi_1^{-1}(\xi)$, we can actually suppose that $d\alpha|_{\xi(\zero)}$ coincides with $dx \wedge dy$.
	
	Now we want to adjust the two-from $d\alpha$ on the whole tangent space $T_\zero\R^3$. Firstly, since it is anti-invariant the equality
	\[
	d\alpha(\zero) = dx\wedge dy + \mu\, dx \wedge dz\,.
	\]
	is forced to hold. Let's then replace $\alpha$ with a new contact form $\alpha'$, for the same structure, given by
	\[
	\alpha_1 \coloneqq e^{-\mu x}\alpha\,.
	\]
	Then 
	\[
	d\alpha_1 = -\mu e^{-\mu x}\,dx \wedge \alpha + e^{-\mu x}\,d\alpha\,,
	\]
	and
	\[
	\begin{cases}
		d\alpha_1(\zero) = dx \wedge dy\\
		\alpha_1(\zero) = \partial_z
	\end{cases}\,.
	\]

	Finally, just set $\alpha_0 \coloneqq dz - y\,dx$ and interpolate it with $\alpha_1$. Since $\alpha_t$, $d\alpha_t$ at the origin are constantly $\alpha_1$ and $d\alpha_1$ respectively, then close to the origin the interpolation is a contact form. Finally just use a bump function to radially interpolate $\alpha_t$ with $\alpha$ away from the origin. Conclude then via \Cref{EGray}.
\end{proof}
\begin{cor}\label{equalneighorigin}
	Suppose that $u$ is given by $(x,y,z) \mapsto (x,-y,-z)$. Up to equivariant isotopy, any two invariant contact structures on $\R^3$ can be made be equal in a small neighborhood of the origin.
\end{cor}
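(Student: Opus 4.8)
The plan is to reduce everything to \Cref{EDarbouxustd} applied twice. Let $\xi_0$ and $\xi_1$ be invariant contact structures on $\R^3$, and observe that the hypothesis $u\colon (x,y,z)\mapsto(x,-y,-z)$ is exactly the normal form required by \Cref{EDarbouxustd}. First I would apply that proposition to $\xi_0$: it produces a global equivariant isotopy $\phi^0_t$, with $\phi^0_0 = \id$, such that $\xi_0' \coloneqq T\phi^0_1(\xi_0)$ coincides with $\Ker(dz - y\,dx)$ on some open neighborhood $U_0$ of the origin. Since $\phi^0_t$ is equivariant, $\xi_0'$ is again an invariant contact structure, which is consistent with the fact that $\Ker(dz - y\,dx)$ is itself $u$-invariant (indeed $u^*(dz - y\,dx) = -(dz - y\,dx)$). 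I would then do the same for $\xi_1$, obtaining an equivariant isotopy $\phi^1_t$ and an invariant contact structure $\xi_1' \coloneqq T\phi^1_1(\xi_1)$ agreeing with the same model $\Ker(dz - y\,dx)$ on a neighborhood $U_1$ of the origin.

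Now set $U \coloneqq U_0 \cap U_1$, an open neighborhood of the origin. On $U$ both $\xi_0'$ and $\xi_1'$ equal $\Ker(dz - y\,dx)$, hence they coincide on $U$. As each $\xi_i$ is carried to $\xi_i'$ by the equivariant isotopy $\phi^i_t$, this exhibits equivariant isotopies after which the two structures are literally equal near the origin, which is the claim.

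There is essentially no obstacle here: the statement is a direct corollary. The only points worth checking are that the isotopies furnished by \Cref{EDarbouxustd} are honestly global isotopies of $\R^3$ — which is the case, since in its proof the interpolation is cut off with a radial bump function so that the generating vector field vanishes off a compact set and can therefore be integrated over all of $\R^3$, as in the remark following \Cref{EGray} — and that the two a priori different Darboux neighborhoods can be compared, which is handled by simply intersecting them.
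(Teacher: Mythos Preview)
Your proposal is correct and is exactly the intended argument: the paper states this as an immediate corollary of \Cref{EDarbouxustd} without writing out a proof, and applying that proposition separately to $\xi_0$ and $\xi_1$ and intersecting the resulting Darboux neighborhoods is precisely what is meant.
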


Let's end the section with a last lemma, that allows us to reconstruct contact structures from singular foliations.
\begin{lemma}\label{charinducescontact}
	Let $S$ be an $u$-oriented compact surface embedded in $\R^3$. A vector field $X$ on $S$ represents the characteristic foliation of some invariant contact structure defined near $S$ if and only if it satisfies the condition
	\[
		\diver_\Omega(X)(p)\neq 0 \text{ if } X(p)=0\,.
	\]
\end{lemma}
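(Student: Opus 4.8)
The plan is to treat the two implications separately, working in the equivariant collar $\NN(S)\cong S\times\R$ furnished by \Cref{tubsurf}, on which $u(x,z)=(u(x),z)$ and an invariant contact form is of the shape $\alpha=\beta_z+w_z\,dz$ with $\beta_z,w_z$ anti-invariant.

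For the ``only if'' direction, suppose $X=h\,X_\xi$ for a positive invariant $h$, where $X_\xi$ is the characteristic foliation of an invariant $\xi$, so $i_{X_\xi}\Omega=\beta_0$. If $X(p)=0$ then $X_\xi(p)=0$ and hence $\beta_0(p)=0$; evaluating the contact condition \eqref{eq:contcond} at $z=0$ and at $p$ kills the term $\beta_0\wedge(dw_0-\dot\beta_0)$ and leaves $w_0(p)\,d\beta_0(p)>0$. Since $d\beta_0=d(i_{X_\xi}\Omega)=\Lie_{X_\xi}\Omega=(\diver_\Omega X_\xi)\,\Omega$, this forces $\diver_\Omega X_\xi(p)\neq0$, and then $\diver_\Omega X(p)=h(p)\,\diver_\Omega X_\xi(p)\neq0$ as well; in particular the stated condition descends to equivalence classes of singular foliations.

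For the ``if'' direction I would search for a contact form of the simple shape $\alpha=\beta+w\,dz$ with $\beta:=i_X\Omega$ (anti-invariant) and $w=w(x)$ independent of $z$. A short computation gives $\alpha\wedge d\alpha=L(w)\wedge dz$ with
\[
	L(w):=\beta\wedge dw+(\diver_\Omega X)\,w\,\Omega=\bigl((\diver_\Omega X)\,w-X(w)\bigr)\Omega\,,
\]
so all that is needed is a function $w$ with $L(w)>0$ pointwise; such an $\alpha$ is then automatically anti-invariant, $\ker\alpha$ is invariant, and its characteristic foliation on $S\times\{0\}$ is exactly $X$. The existence of some $w$ with $L(w)>0$ under the hypothesis $\diver_\Omega X\neq0$ on $X^{-1}(0)$ is precisely the classical realisation criterion of Giroux (cf.\ \cite{geigesIntroductionContactTopology2008}). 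To make $w$ anti-invariant I would invoke linearity: $L$ is $\R$-linear in $w$, while $u^*\beta=-\beta$, $u^*\Omega=\Omega$ and $u^*(\diver_\Omega X)=-\diver_\Omega X$ give $L(-w\circ u)=u^*\bigl(L(w)\bigr)>0$; hence the average $\tfrac12\bigl(w-w\circ u\bigr)$ is anti-invariant and still satisfies $L>0$, and we use it in place of $w$.

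I expect the only substantial input to be the classical existence of $w$ — where $w$ is genuinely forced to change sign along the orbits of $X$ — whereas the equivariant part is just the soft averaging above. One compatibility point deserves a line: since $S$ is $u$-oriented, $u|_S$ preserves orientation, so its fixed points in $S$ are isolated with differential $-\id$; near such a point an anti-invariant $X$ is an even map, so $X(p)=0$ would force $\diver_\Omega X(p)=0$. Thus the hypothesis already guarantees that $X$ (hence $\beta$) is nonzero at the fixed points, which is exactly what reconciles $L(w)>0$ with the constraint $w(p)=0$ imposed there by anti-invariance.
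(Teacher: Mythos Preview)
Your argument is correct, but the construction for the ``if'' direction differs from the paper's. You keep $\beta_z=\beta$ constant in $z$, invoke the classical (non-equivariant) Giroux realisation to obtain some $w$ with $(\diver_\Omega X)\,w-X(w)>0$, and then average $w\mapsto\tfrac12(w-w\circ u)$ to enforce anti-invariance; the linearity check $L(-u^*w)=u^*L(w)>0$ is clean, and your closing remark that the hypothesis forbids $X$ from vanishing at the fixed points of $u|_S$ (since an anti-invariant $X$ is even there, forcing $\diver_\Omega X(p)=0$) is exactly what is needed to make $L(w)>0$ compatible with the forced $w(p)=0$. The paper instead gives a fully explicit construction: it takes $w:=\diver_\Omega X$ (which is already anti-invariant), lets $\beta_z:=\beta+z(dw-\gamma)$ with $\gamma$ the $g$-orthogonal rotate of $\beta$ for an invariant metric $g$ (so $u^*\gamma=-\gamma$), and checks directly that the contact condition at $z=0$ reads $w^2\Omega+\beta\wedge\gamma>0$. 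Your route is shorter on the equivariant side (averaging is soft) but black-boxes the existence of $w$; the paper's route is self-contained and avoids that citation at the cost of a small metric construction. Both are valid.
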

\begin{proof}
	Let's prove one implication, by fixing an invariant contact structure $\xi$ near $S$ and a vector field $X$ defining the characteristic foliation. Locally the contact form can be written as $\beta_z + w_zdz$, with $\beta \coloneqq \beta_0$. If $p$ is a singularity of $X$ we know that the planes $T_pS$ and $\xi_p$ coincide. Hence
	\[
		\diver_\Omega(X)(p)\,\Omega_p= (d(i_X\Omega))_p = (d\beta)_p = d\alpha|_{T_pS} = d\alpha|_{\xi_p} \neq 0\,.
	\] 
	Hence $\diver_\Omega(X)$ cannot vanish on any singular point of $X$.
	
	On the other hand, let's fix a vector field $X$ on $S$ satisfying the divergence condition, and set $w \colon S \to \R$ such that $d\beta = w\Omega$, where $\beta \coloneqq i_X\Omega$. By construction $u^*w = - w$.
	
	Observe that the vanishing set of $\beta$ and $X$ coincide. And if $p$ such of a point, then
	\[
		0 \neq \diver_\Omega(X)(p)\,\Omega_p = (d(i_X\Omega))_p = (d\beta)_p\,.
	\]
	Hence $w(p)$ cannot vanish on such points.
	
	Secondly, fix an invariant Riemannian metric $g$ on $S$. Since $S$ is oriented, we know that there is a unique $\gamma \in \Lambda^1S$ such that
	\[
	\begin{cases}
		\Abs{\gamma}_g = \Abs{\beta}_g\\
		\ang{\beta,\gamma}_g = 0\\
		(\beta \wedge \gamma)/\Omega \ge 0\\
		(\beta \wedge \gamma)/\Omega > 0 & \text{if $\beta(p) \neq 0$}
	\end{cases}
	\]
	Moreover, we know that 
	\[
	\begin{cases}
		\Abs{- u^*\gamma}_g = \Abs{\gamma}_g = \Abs{\beta}_g\\
		\ang{\beta, - u^*\gamma}_g = \ang{u^*\beta, u^*\gamma}_g = \ang{\beta,\gamma}_g = 0\\
		(\beta \wedge - u^*\gamma)/\Omega = u^*[(\beta \wedge \gamma)/\Omega] \ge 0
	\end{cases}\,,
	\]
	and hence $u^*\gamma$ coincides with $-\gamma$.
	
	Now set
	\[
		\beta_z \coloneqq \beta + z(dw - \gamma)
	\]
	and
	\[
		\alpha' \coloneqq \beta_z + w\,dz\,.
	\]
	
	Then $u^*\alpha' = - \alpha'$, and
	\[
		w\,d\beta_0 + \beta_0 \wedge (dw - \dot{\beta}_z|_{z = 0}) = w^2\Omega + \beta \wedge \gamma > 0,.
	\]
	Thus, $\alpha'$ defines a invariant contact structure near $S$.
\end{proof}
\section{Equivariant Convex Theory}\label{sec:EConvex}
The goal of this section is to build the fundamental tools necessary to adapt basic convex theory to our equivariant scenario.
\begin{defin}
	An action of a group $G$ on a manifold $X$ is said to be \emph{free} if $g\cdot x = x$, for some $g \in G$ and $x \in X$, implies $g = \id_X$.
\end{defin}

\begin{prop}\label{Gtrasv}
	Fix a finite group $G$; let $M$, $N$ be a $G$-manifolds, the latter equipped with a $G$-invariant Riemannian metric, $A$ a $G$-invariant closed subset of $M$, and $Y$ a $G$-submanifold of $N$. Let $f\colon M \to N$ be a smooth equivariant map transversal on $A$ to $Y$ in $N$. Suppose the $G$-action on $M \setminus A$ is free. Then for an arbitrary $G$-invariant positive continuous function $\delta \colon M \to \R$ , there exists a smooth $G$-map $g \colon M \to N$ satisfying the following conditions:
	\begin{enumerate}
	\item $g$ is transversal on $M$ to $Y$ in $N$;
	\item $g|_{A}=f|_{A}$;
	\item $d_{N}(f(x),g(x))<\delta(x)$ for all $x \in M$, where $d_{N}$ stands for the distance function on $N$ induced from the Riemannian metric of $N$.
	\end{enumerate}
\end{prop}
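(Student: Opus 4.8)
The strategy is to reduce to the standard (non-equivariant) parametric transversality theorem by passing to the quotient. Since $G$ acts freely on $M \setminus A$, the quotient $(M \setminus A)/G$ is a smooth manifold, and $f$ descends to a map on the part of $M$ away from $A$. However, the subtlety is that we must glue the perturbation with $f$ along $A$ without destroying equivariance, so I would work directly upstairs with an averaging construction rather than literally on the quotient.

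Here is the plan in order. First I would fix a $G$-invariant tubular neighborhood of $Y$ in $N$ (using the invariant metric on $N$ via the equivariant exponential map, exactly as in \Cref{tubsurf}), so that transversality to $Y$ becomes a submersion condition on the normal coordinate. Second, I would choose a $G$-invariant open cover of $M \setminus A$ that is \emph{subordinate to the free action}: since the action is free and proper (finite group), every point of $M \setminus A$ has a neighborhood $U$ with $gU \cap U = \emptyset$ for $g \neq \id$; shrinking, take a locally finite refinement $\{U_i\}$ and a $G$-invariant partition of unity $\{\rho_i\}$ obtained by averaging an ordinary one. Third, on each $U_i$ I would apply the classical Thom transversality / Sard argument to find an arbitrarily small perturbation $g_i$ of $f$ supported in $U_i$ that is transverse to $Y$ there, then transport it equivariantly: define the perturbation on $gU_i$ by conjugating by the $G$-action (this is unambiguous precisely because $gU_i \cap U_i = \emptyset$ for $g\neq\id$ and because $f$ is equivariant, so the transported pieces agree with what the transversality demands). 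Fourth, assemble these local perturbations using the invariant partition of unity and a normal-coordinate/convexity argument in the invariant tubular neighborhood of $Y$ — this is where one uses that small convex combinations of maps transverse to $Y$, weighted by a partition of unity, remain transverse on the support, and where the $\delta$-closeness is enforced by taking each $g_i$ close enough to $f$ depending on the (invariant, hence descends-correctly) bound $\delta$. Near $A$ the partition of unity is identically the one giving $f$, so condition (2) holds automatically; equivariance of the final $g$ follows because every ingredient ($\rho_i$, the transported $g_i$, the invariant tubular neighborhood) is $G$-invariant or $G$-equivariant.

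The main obstacle is the bookkeeping at the interface between the free region $M \setminus A$ and the fixed-or-nonfree behavior near $A$: one must guarantee that the perturbation dies fast enough approaching $A$ that transversality is not required there (it is already assumed on $A$), while simultaneously keeping the supports small enough that the $G$-translates $\{gU_i\}_{g \in G}$ of each chart remain disjoint, so that "transport the perturbation by $G$" is well-defined and automatically equivariant. I expect the cleanest route is: cover $M \setminus A$ by a $G$-invariant family of "elementary" opens (each a disjoint union of $|G|$ translates of a single slice), handle one slice from each orbit by the ordinary transversality theorem with error budget $\delta(x)/|G|$ or smaller, spread by the action, and then show the overlaps only ever involve translates within a single orbit-family plus the trivial extension $f$ near $A$ — so the convex-combination step never mixes more than finitely many mutually compatible pieces. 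The verification that the assembled map is still $\delta$-close and globally transverse is then a routine, if slightly tedious, local check in the invariant tubular coordinates.
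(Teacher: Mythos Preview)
The paper does not actually prove this proposition; it simply cites Morimoto's equivariant transversality theorem. Your plan is the standard route to such a result when the action is free away from the locus where transversality is already granted, and it is correct in outline: take slices for the free $G$-action on $M\setminus A$, apply ordinary Thom transversality on each slice, transport by $G$, and patch equivariantly while letting the perturbation die near $A$.

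One point deserves care. The sentence ``small convex combinations of maps transverse to $Y$, weighted by a partition of unity, remain transverse on the support'' is not literally true: transversality is not preserved under convex combination. The usual repair is to proceed inductively over the (locally finite, $G$-invariant) cover --- having achieved transversality on $U_1\cup\dots\cup U_k$, which is an open condition, perturb with support in the $G$-orbit of a slice of $U_{k+1}$ by an amount small enough to gain transversality on $U_{k+1}$ without destroying it on the earlier sets. Alternatively, build a single finite-dimensional $G$-equivariant family of perturbations that is submersive onto the normal of $Y$ at every point of $M\setminus A$, and apply Sard to the parameter. With either fix the rest of your bookkeeping (invariant tubular neighborhood, disjoint $G$-translates of slice charts, cutoff near $A$, $\delta$-control) goes through as you describe.
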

\begin{proof}
	See \cite[Thm.\ 1.1]{morimotoEQUIVARIANTTRANSVERSALITYTHEOREM2014}.
\end{proof}
\begin{lemma}\label{nondegequiv}
	Let $S$ be a closed surface, equipped with a involution that is free outside a finite number of points, and with an anti-invariant vector field with no closed orbits, that does not vanish on such points. Then it can be $C^\infty$-approximated by an anti-invariant vector field having a finite number of singularities, all non-degenerate (i.e.\ transverse to the zero-section).
\end{lemma}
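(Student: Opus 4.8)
The plan is to regard $X$ as a $u$-equivariant section of $TS$ and to make it transverse to the zero section by a $C^\infty$-small \emph{anti-invariant} perturbation. Write $u$ for the involution on $S$ and fix a $u$-invariant Riemannian metric, so that $u$ is an isometry. At each of the finitely many fixed points $p$ the differential $d_pu$ is a linear involution of $T_pS$ with trivial $(+1)$-eigenspace (a nonzero fixed tangent vector would generate a geodesic arc of fixed points through $p$, contradicting isolation), hence $d_pu = -\mathrm{id}_{T_pS}$. Lift $u$ to an involution $\tilde u$ of $TS$ by $\tilde u(p,v) = (u(p), -d_pu(v))$: then the projection $\pi \colon TS \to S$ is equivariant, the zero section $Z$ is $\tilde u$-invariant, and a vector field $Y$ on $S$ is anti-invariant exactly when $Y \colon S \to TS$ is $(u,\tilde u)$-equivariant. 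In particular $X$ is such a map; and since $X$ does not vanish on the fixed set $F$, it misses $Z$ on a $u$-invariant compact neighbourhood $A$ of $F$ (e.g.\ a union of small metric balls about the fixed points), while the $u$-action is free on $S \setminus A$ — precisely the input required by \Cref{Gtrasv}.

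Rather than perturbing $X$ among all maps, which would leave the class of sections and yield only $C^0$ control, I would perturb it inside a finite-dimensional space of anti-invariant vector fields. Anti-invariant fields are \emph{unconstrained} at the fixed points, since there $d_pu = -\mathrm{id}$ makes $u^*Y(p) = -Y(p)$ vacuous; so for every $q \in S$ one can produce two anti-invariant fields whose values at $q$ span $T_qS$: take $V - u^*V$ with $V$ supported in a small ball about $q$ disjoint from its $u$-image when $q \notin F$, and $V - u^*V$ (which equals $2V(q)$ at $q$) with $V$ supported near $q$ when $q \in F$. By compactness, finitely many of these span a finite-dimensional $W \subseteq \{Y \in \XX(S) : u^*Y = -Y\}$ with $\operatorname{ev}_q \colon W \to T_qS$ onto for every $q$. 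Then the family $F \colon S \times W \to TS$, $F(q,\eta) = X(q) + \eta(q)$, is transverse to $Z$ — at a point of $F^{-1}(Z)$ its differential already surjects onto the normal direction $T_qS$ via $\operatorname{ev}_q$ — so by parametric transversality (Sard applied to $F^{-1}(Z) \to W$) the set of $\eta \in W$ with $X + \eta \pitchfork Z$ is dense. Picking such an $\eta$ of small norm, $X + \eta$ is anti-invariant, transverse to $Z$, and therefore — $S$ being compact, and $\eta$ small enough that $X + \eta$ stays nonzero near $F$ — has finitely many zeros, all non-degenerate; the approximation is $C^\infty$-small because $W$ is finite-dimensional.

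The step I expect to be the crux is the construction of $W$: it must be simultaneously anti-invariant and rich enough to control the full section-derivative at every point, which is possible exactly because the fixed locus consists of isolated points with $d_pu = -\mathrm{id}$. One could instead invoke \Cref{Gtrasv} directly on the equivariant map $X \colon S \to TS$ relative to $A$, obtaining an equivariant $g \pitchfork Z$ agreeing with $X$ on $A$ (which again means non-degenerate zeros); but then $g$ need not be a section — one must post-compose with the inverse of $\pi \circ g$ — and \Cref{Gtrasv} as quoted gives only $C^0$-closeness, complications that the finite-dimensional perturbation avoids. Finally, the hypothesis that $X$ has no closed orbits plays no role in producing the non-degenerate zeros.
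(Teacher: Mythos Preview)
Your proof is correct and takes a genuinely different route from the paper's. The paper applies \Cref{Gtrasv} directly to the equivariant map $X \colon S \to TS$ (relative to a neighbourhood of the fixed set), obtains an equivariant $X' \pitchfork Z$ which is \emph{not} a section, writes $X'(x) = (h(x),Y(x))$, checks that $h$ is a $u$-equivariant diffeomorphism close to the identity, and then takes $X' \circ h^{-1}$ as the desired anti-invariant vector field. Your approach instead builds a finite-dimensional space $W$ of anti-invariant \emph{sections} with surjective evaluation everywhere, and runs parametric transversality inside $W$.

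What each buys: your argument stays within sections throughout, so the post-composition trick is unnecessary, and it yields $C^\infty$-closeness for free because all norms on the finite-dimensional $W$ are equivalent---whereas \Cref{Gtrasv} as quoted in the paper only controls the $C^0$ distance, so the paper's route strictly speaking needs an upgrade (either a $C^\infty$ version of the transversality theorem, or an argument that $C^0$-small equivariant transversality implies the existence of a $C^\infty$-close one). The paper's route is more black-box and generalises readily to higher-dimensional fixed sets; yours is more elementary and exploits precisely the key local fact you isolated, that $d_pu = -\mathrm{id}$ makes anti-invariant fields unconstrained at the fixed points. Your closing remarks already identify both of these trade-offs accurately, and you are also right that the no-closed-orbits hypothesis is unused here.
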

\begin{proof}
	Set $G \coloneqq \Z/2\Z$. Let's consider the $G$-spaces $M \coloneqq S$ and $N \coloneqq TS$, with the action on the second space being
	\[
		g\cdot(p,v) = (g\cdot p, - d_pg[v])\,.
	\]
	The anti-invariant vector field are precisely the $G$-equivariant sections in the sense of \Cref{Gtrasv}. Moreover, we know that on the fixed points of $G$ the field $X$ does not vanish. Hence we have that
	\begin{enumerate}
		\item the $G$ action outside such points is free;
		\item on such points $X$ is vacuously transversal to the zero-locus of $TS$.
	\end{enumerate}
	
	Hence we can approximate $X$ with a new map $X' \colon S \to TS$ that is transversal to the zero-section, but that is not necessarily a section. Let's set $X'(x) = (h(x), Y(x))$ to be the vector $Y(x)$ belonging to the tangent space of $h(x)$. Set $\pi$ to be the natural projection from $TS$ to $S$.
	
	Since $h$ it is arbitrarily close to the identity map, it can be supposed to be a diffeomorphism of $S$ to itself. Then $Z \coloneqq X' \circ h^{-1}$ is vector field close to $X$ and transverse to $S$. Moreover
	\[
		(h \circ u)(x) = (\pi \circ X' \circ u)(x) = \pi[- d_{h(x)}u(Y(x))] =  u(h(x))\,,
	\]
	hence $h$ is $u$-equivariant. Finally, $Z$ is an anti-invariant vector field:
	\[
	\begin{split}
		(Z \circ u)(x) &= (X' \circ h^{-1} \circ u)(x)\\
		&= (X' \circ u \circ h^{-1})(x) \\
		&=- d_{hh^{-1}(x)}u[X'(h^{-1}(x))]\\
		&= - d_xu[Z(x)]\,.
	\end{split}
	\]
	So $Z$ is the required perturbation.
\end{proof}

Recall that given a vector field $X$ on a surface $S$, a singularity $p$ is non-degenerate iff, red in charts, the linear map $d_pX$ from $\R^2$ to itself non-zero determinant. Such a point is necessarily an isolated singularity. Moreover, if such linear map has eigenvalues with non-zero real part, then $p$ is \emph{generic}. Finally, if the real parts of such eigenvalues have the same signs, we say that $p$ is \emph{elliptic}. If not, $p$ is \emph{hyperbolic}.
\begin{defin}\label{MS}
	A vector field $X$ is said to be of Morse--Smale type if the following occur:
	\begin{enumerate}
		\item there is only a finite number of singularities, all generic;
		\item the $\alpha$- and $\omega$- limit sets of every trajectory can only be singularities or closed orbits;
		\item no trajectory connects hyperbolic points;
		\item there is only a finite number of closed orbits, all ``simple''.
	\end{enumerate}
	We are not going to specify the meaning of the last requirement, since we will avoid close orbit all together.
\end{defin}

The next proposition is an equivariant adaptation of classical Peixoto's work regarding structurally stable vector fields on surfaces \cite{peixotoStructuralStabilityTwodimensional1962}.
\begin{prop}\label{CinftyMorseSmale}
	Let $(S, \Omega)$ be an oriented closed surface equipped with an area form. Moreover, suppose that $S$ is equipped with a $\Z/2\Z$-action having a finite number of fixed point and that preserves $\Omega$, and with an anti-invariant vector field with no closed orbits, that does not vanish on such points. Then it can be $C^\infty$-approximated by a anti-invariant vector field that is Morse--Smale.
\end{prop}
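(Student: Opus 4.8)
The plan is to reproduce, equivariantly, the classical proof that Morse--Smale fields are $C^\infty$-dense on a closed surface. By \Cref{nondegequiv} -- whose perturbation may be taken supported in an arbitrarily small neighbourhood $U$ of the singular set (outside $U$ the field is non-zero, hence vacuously transverse to the zero section, so one applies \Cref{Gtrasv} with that complement adjoined to $A$), and therefore creates no closed orbit -- we may assume from the outset that $X$ is anti-invariant, has finitely many non-degenerate singularities, none on $\mathrm{Fix}(u)$ (automatic for a small enough perturbation, since $X$ is non-zero on the finite set $\mathrm{Fix}(u)$), and still has no closed orbit. Two things remain to arrange: that every singularity is generic (eigenvalues off the imaginary axis), and that no trajectory joins two hyperbolic singularities.

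First I would make the singularities generic. Since they avoid $\mathrm{Fix}(u)$ and the action is free there, the singularities occur in honest two-point orbits $\{p,u(p)\}$, and the linearisations at $p$ and $u(p)$ determine each other (they are conjugate up to a sign via $d_pu$); so it suffices to act near one point of each orbit and spread by equivariance. Concretely, choose disjoint disks $D_p$ around a representative of each orbit, disjoint from all the $u(D_p)$; on $\bigsqcup D_p$ add an arbitrary field $\tilde Y$ turning each center-type linearisation into a focus, and replace $\tilde Y$ by $\tfrac{1}{2}(\tilde Y - u^*\tilde Y)$, which is anti-invariant, supported in $\bigsqcup(D_p\cup u(D_p))$, the two halves not interfering because the disks are disjoint. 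Genericity being open and, by this construction (or by equivariant jet transversality in the spirit of \Cref{Gtrasv}), generic, an arbitrarily $C^\infty$-small such correction does it, keeping finitely many singularities, all off $\mathrm{Fix}(u)$, and creating no closed orbit (the perturbation lives near focusing singularities).

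Next I would break the saddle connections. There are finitely many trajectories joining two hyperbolic singularities; for each I insert a small perturbation in a flow box transverse to it, pushing the unstable separatrix off the stable one, as classically, and choosing the side so that no periodic orbit is spawned -- this matters only for homoclinic loops, and a homoclinic loop, based at a saddle $s$ with $u(s)\ne s$, is never $u$-invariant, so its mirror loop at $u(s)$ is broken to the $u$-image of the safe side, consistently. Equivariance splits into two cases. If $u(\gamma)\ne\gamma$ as a set, the flow box and its $u$-image are disjoint and one perturbs in both, antisymmetrising as above. If $u(\gamma)=\gamma$, then $u$ carries $\gamma$ to itself with reversed time (as $u^*X=-X$), so $\gamma$ meets $\mathrm{Fix}(u)$ in a single regular point $q$ with $X(q)\ne 0$; around $q$ one must take a $u$-invariant flow box -- in which $u$ acts as $-\mathrm{id}$ and $X$ is $C^1$-close to a non-zero constant field -- and an anti-invariant bump transverse to $\gamma$ there. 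Iterating over the finitely many connections, each correction small enough to keep the singularities hyperbolic and to preserve the connections already broken, yields an anti-invariant $X'$, $C^\infty$-close to $X$, with finitely many generic singularities, no saddle connection, and no closed orbit.

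It only remains to see that such an $X'$ is Morse--Smale: with no saddle connections there are no polycycles, so by Poincaré--Bendixson-type reasoning (valid in particular on the sphere, which is the case needed in the sequel) the $\alpha$- and $\omega$-limit sets are singularities or closed orbits, and we have excluded the latter, so condition 4 of \Cref{MS} is vacuous and \Cref{MS} holds. I expect the real obstacle to be the equivariant breaking of a $u$-invariant connection: one must displace separatrices across a point of $\mathrm{Fix}(u)$ using only an anti-invariant perturbation, so -- unlike the disjoint case -- the perturbation cannot be freely prescribed on one side and mirrored, and one has to check directly that an anti-invariant bump near $q$ can be arranged to separate the two separatrices without recreating the connection.
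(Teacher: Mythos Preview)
Your proposal is correct and follows essentially the same line as the paper: invoke \Cref{nondegequiv} for non-degeneracy, symmetrise a local perturbation near each (necessarily non-fixed) singularity to get genericity, then break saddle connections by Peixoto-style flow-box pushes made anti-invariant, noting that closed orbits cannot be created by small perturbations.

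Two remarks. First, the concern you flag in the last paragraph is not an obstacle, and the paper's proof is in fact built around exactly this case. In a $u$-invariant flow box centred at the fixed point $q$ on a $u$-invariant connection, take coordinates with $u=-\mathrm{id}$ and $X=\partial_x$; an anti-invariant perturbation $g\,\partial_y$ forces $g(-x,-y)=g(x,y)$, and a radially symmetric bump $g$ at the origin satisfies this. The unstable separatrix enters at $(-a,0)$ and, to first order, exits at height $\epsilon\int_{-a}^{a}g(x,0)\,dx\neq 0$, while the stable separatrix of the paired saddle is its $u$-image and hence exits at $(a,0)$; so the connection is broken. This is precisely the paper's formula $X'=X+\epsilon\,u\,\phi\,Z$ with $Z=(0,\pm1)$ and $\phi$ an even bump. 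Your case analysis ($u(\gamma)\neq\gamma$ versus $u(\gamma)=\gamma$) is more explicit than the paper, which only spells out the invariant-box case.

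Second, your appeal to Poincar\'e--Bendixson for condition~2) of \Cref{MS} is specific to the sphere; on higher-genus surfaces one needs Peixoto's full perturbation scheme (which the paper simply cites). Since the only application downstream is to $S^2$, this is harmless, but the statement is for a general closed surface.
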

\begin{proof}
	Thanks to \Cref{nondegequiv} we can perturb $X$ such that all the singularity are non-degenerate. Then, an adequate perturbation allows us to make such singularities generic; since such modification is done in neighborhoods of the singularities, which are not fixed points, we can simply work symmetrically in order to obtain an anti-invariant perturbation.
	
	Regarding the second and the third requests of \Cref{MS}, Peixoto achieves them by perturbing $X$ away from singularities, in arbitrarily small rectangles. Those are rectangles defined around fixed point, where the $x$-coordinate is parameterized using the flow of $X$. Hence, using the same procedure as of \Cref{tubsurf}, we can fix coordinates $(x,y)$ where the $u$-action is given by $(-x,-y)$.
	
	Thus, since all Peixoto's perturbation are of the form
	\[
		X' = X + \epsilon\, u\, \phi\, Z
	\]
	with 
	\[
	\begin{cases}
		\epsilon, u \in \R\\
		\phi \text{ some map } S \to \R, \phi > 0 \text{ only on the rectangle}\\
		Z = (0,\pm1)
	\end{cases}\,,
	\]
	it is immediate that $X'$ is anti-invariant.
	
	Finally, close orbits cannot appear by small perturbations.
\end{proof}

We want to use \Cref{CinftyMorseSmale} to perturb spheres such that the characteristic foliation become of Morse--Smale type. Let's start assuring that we have the right hypothesis.
\begin{lemma}\label{LefschetzS2}
	Let $S$ be a closed surface isomorphic to $S^2$ equipped with an involution $g$ such that, at every fixed point, the differential is minus the identity. Then $g$ must have exactly two fixed points.
\end{lemma}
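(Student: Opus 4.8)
The plan is to extract the constraint from the Lefschetz fixed point theorem together with a local analysis of the fixed set, so the structure is: (1) show the fixed set is finite; (2) compute the Lefschetz number; (3) match the two.

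First I would argue that the fixed point set $\mathrm{Fix}(g)$ is a discrete, hence finite, subset of $S$. At any $p \in \mathrm{Fix}(g)$ the differential $d_pg$ is $-\mathrm{id}$ on $T_pS$ by hypothesis. Choosing a $g$-invariant Riemannian metric (the group $\Z/2\Z$ being compact, average any metric), the exponential map $\exp_p$ conjugates $g$ near $p$ to its linearization $-\mathrm{id}$ on a small ball of $T_pS$; the only fixed point of $-\mathrm{id}$ is the origin, so $p$ is isolated in $\mathrm{Fix}(g)$. Since $S$ is compact, $\mathrm{Fix}(g)$ is finite.

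Next I would compute the Lefschetz number $L(g) = \sum_i (-1)^i \Tr\big(g^* \colon H^i(S;\mathbb{Q}) \to H^i(S;\mathbb{Q})\big)$. Since $S \cong S^2$, we have $H^0 = H^2 = \mathbb{Q}$ and $H^1 = 0$. On $H^0$ the map $g^*$ is the identity (it acts trivially on a connected space), contributing $+1$. For $H^2$: the action of $g$ on $S^2$ must be orientation-preserving or -reversing; I would determine this from the local model. Near a fixed point $d_pg = -\mathrm{id}$ on the $2$-dimensional $T_pS$, which has determinant $(-1)^2 = +1$, so $g$ preserves the orientation of $S$; hence $g^*$ acts as $+1$ on $H^2$, and therefore $L(g) = 1 + 0 + 1 = 2$.

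Finally, the Lefschetz fixed point theorem says that when $\mathrm{Fix}(g)$ is finite, $L(g) = \sum_{p \in \mathrm{Fix}(g)} \mathrm{ind}_p(g)$, where each local index $\mathrm{ind}_p(g)$ is the Brouwer degree of $x \mapsto x - g(x)$ near $p$. In the local model this is the degree of $x \mapsto x - (-x) = 2x$, which is $+1$. So $L(g) = \#\mathrm{Fix}(g)$, and combining with $L(g) = 2$ gives exactly two fixed points. The one point requiring care is justifying that the local index at each fixed point is $+1$ rather than just asserting it: this needs the conjugation of $g$ to its linear model $-\mathrm{id}$ via the invariant exponential map (already used above), after which the index computation is the standard fact that $\deg(x \mapsto 2x) = +1$ on $\R^2$. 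I would also note that $\mathrm{Fix}(g)$ is nonempty since $L(g) = 2 \neq 0$, so the count "exactly two" is not vacuous.
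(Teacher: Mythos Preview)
Your proof is correct and follows essentially the same approach as the paper: compute the Lefschetz number of $g$ on $S^2$ as $2$, observe that each fixed point is isolated with local index $+1$ because $g$ is locally conjugate to $-\id$, and conclude via the Lefschetz fixed point theorem. If anything, your version is more carefully justified, since you explicitly argue both the discreteness of the fixed set and the orientation-preserving nature of $g$ from the local model, whereas the paper largely asserts these.
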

\begin{proof}
	Consider the so-called \emph{Lefschetz number}
	\[
		\Lambda_g \coloneqq \Tr(g_{*,0}) + \Tr(g_{*,2})\,,
	\]
	where $g_{*,i}$ are the map induced on $H_i(S^2,\mathbb{Q})$, which is isomorphic to $\mathbb{Q}$ for $i = 0,2$ and vanishing otherwise.
	
	The trace $\Tr(g_{*,0})$ is $1$ since $S$ is connected. The trace $\Tr(g_{*,1})$ is $1$ since $g$ is orientation-preserving diffeomorphism. Hence $\Lambda_g$ is $2$.
	
	By using classical facts about fixed point theory, that can be found in \cite[Ch. VII]{albrechtlecturesonalgebraictopology}, one learns that
	\begin{enumerate}
		\item all the fixed point have \emph{fixed point index} $1$, since around such points $u$ can be viewed as minus the identity,
		\item since such fixed points are isolated, they are finite and the Lefschetz number $\Lambda_g$ is the sum of their index.
	\end{enumerate}
	Hence $g$ admits exactly 2 fixed points.
\end{proof}
\begin{prop}\label{CinftysurfMorseSmale}
	Given an $u$-oriented surface $S \subseteq \R^3$ diffeomorphic to $S^2$, and $\xi$ a tight invariant contact structure in $\R^3$, we can equivariantly $C^\infty$-perturb it such that it's characteristic foliation becomes Morse--Smale. 
\end{prop}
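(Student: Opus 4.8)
The plan is to perturb the characteristic foliation $X \coloneqq S_\xi$, regarded as an anti-invariant vector field on $S$, into Morse--Smale form by means of \Cref{CinftyMorseSmale}, and then to rebuild an invariant contact structure near $S$ out of it through \Cref{charinducescontact}, keeping everything $C^\infty$-close to $\xi$ so that tightness is not lost. The invariance of $\Omega$ and the anti-invariance of $X$ hold because $S$ is $u$-oriented and $\xi$ is invariant, so the three remaining hypotheses of \Cref{CinftyMorseSmale} that must be checked are: that $u|_S$ has finitely many fixed points, that $X$ does not vanish at them, and that $X$ has no closed orbits.

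The absence of closed orbits is where tightness enters. If $\gamma \subseteq S$ were a closed orbit of $S_\xi$, then $\gamma$ is an embedded closed Legendrian curve, and since $S \cong S^2$ it bounds a disk $D \subseteq S$, so $\gamma$ is a Legendrian unknot whose $D$-induced framing is the Seifert framing. Along $\gamma$ there are no singularities, so $\xi$ and $TS$ are two distinct planes both containing $T\gamma$ at every point of $\gamma$; they stay hinged along $T\gamma$ without ever coinciding, hence $\xi$ cannot rotate relative to $TS$ along $\gamma$ and $\tb(\gamma)=0$, contradicting the Bennequin inequality for Legendrian unknots in the tight structure $\xi$. So $S_\xi$ has no closed orbit.

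For the fixed points, note that $u^*\Omega = \Omega$ makes $u|_S$ an orientation-preserving involution of $S \cong S^2$, which is non-trivial because $\dim S > \dim \mathrm{Fix}(u)$. Its fixed point set is $S \cap \mathrm{Fix}(u)$, and at each such $p$ the hypothesis on $u$ gives $d_pu = [1] \oplus [-1] \oplus [-1]$ on $T_p\R^3$; the only orientation-preserving possibility for the $u$-invariant plane $T_pS$ is then the $(-1)$-eigenplane, so $d_pu|_{T_pS} = -\id$ and $S$ is automatically transverse to $\mathrm{Fix}(u)$ at $p$. By \Cref{LefschetzS2}, $u|_S$ has exactly two fixed points. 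Finally $u^*\alpha = -\alpha$ forces $\alpha_p$ to vanish on the $(+1)$-eigenline $L$ of $d_pu$, which is not contained in the $(-1)$-eigenplane $T_pS$; hence $\xi_p \neq T_pS$, i.e.\ $X(p) \neq 0$.

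All hypotheses being in place, \Cref{CinftyMorseSmale} produces an anti-invariant Morse--Smale field $X'$ arbitrarily $C^\infty$-close to $X$; after one further generic $C^\infty$-small equivariant perturbation, supported near the (finitely many, and $\mathrm{Fix}(u)$-avoiding) singular points and carried out symmetrically with their $u$-images, we may also assume $\diver_\Omega X' \neq 0$ at every singularity without disturbing the Morse--Smale structure. Then \Cref{charinducescontact} applies to $X'$; carrying out its construction but starting from a normal form $\beta_z + w_z\,dz$ for $\alpha$ near $S$ (\Cref{tubsurf}) and only replacing $\beta_0$ by $i_{X'}\Omega$ — an anti-invariant, $C^\infty$-small change, with the remaining data corrected by a small anti-invariant term so that the open condition \eqref{eq:contcond} still holds — yields an anti-invariant $\alpha'$, $C^\infty$-close to $\alpha$, whose characteristic foliation along $S$ is exactly $X'$. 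Extending $\alpha'$ to a compactly supported anti-invariant perturbation of $\alpha$ and invoking equivariant Gray stability (\Cref{EGray}, valid on $\R^3$ because the deformation is compactly supported) shows the new invariant structure is equivariantly isotopic to $\xi$, hence tight. I expect this last reconstruction step to be the main obstacle: the point is not merely to produce some invariant tight contact structure whose characteristic foliation on $S$ is $X'$, but one that stays $C^\infty$-close to $\xi$, which is exactly what makes the Gray-stability argument deliver tightness while respecting the $u$-action throughout.
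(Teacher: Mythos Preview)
Your proof is correct and follows essentially the same route as the paper: verify the hypotheses of \Cref{CinftyMorseSmale} (no closed orbits from tightness via Bennequin, exactly two fixed points via \Cref{LefschetzS2}, and $\xi_p\neq T_pS$ there), perturb $X$ to a Morse--Smale $X'$, rebuild a nearby anti-invariant contact form $\alpha'$ inducing $X'$ on $S$, and conclude via equivariant Gray stability on the compactly supported interpolation.

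Two small remarks. First, your extra perturbation to arrange $\diver_\Omega X'\neq 0$ at singularities is unnecessary: the paper observes that the condition ``$\diver_\Omega X\neq 0$ wherever $X=0$'' is $C^\infty$-open, so any $X'$ sufficiently close to the characteristic foliation $X$ (which already satisfies it by \Cref{charinducescontact}) inherits it automatically. Second, the paper phrases the conclusion as a perturbation of $S$ rather than of $\xi$: the Gray isotopy $\psi_t$ with $T\psi_t(\xi_t)=\xi$ yields $\psi_1^{-1}(S)$ as the perturbed sphere whose $\xi$-characteristic foliation is (the pullback of) $X'$. Your version, producing a nearby tight invariant $\xi'$ with $S_{\xi'}$ Morse--Smale, is equivalent via the same isotopy, but it is worth making that translation explicit since later uses (e.g.\ \Cref{Cinftyconvex}) read the statement as a perturbation of the surface.
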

\begin{proof}
	Let's firstly observe that the condition
	\[
		\diver_{\Omega}(X)(p) \neq 0 \text{ if } X(p) = 0
	\]
	depends only on the field $X$, and is open in the $C^\infty$ topology. Moreover, since $\xi$ is tight, and $S$ is diffeomorphic to $S^2$, the characteristic foliation cannot have closed orbits, since the interior would be a disk violating the tightness condition \cite[Prop.\ 4.6.28]{geigesIntroductionContactTopology2008}.
	
	Due to \Cref{LefschetzS2}, the involution $u|_S$ has exactly 2 fixed points. Moreover, at such of a point $p$ the planes $\xi_p$ and $T_pS$ cannot coincide, since $d_pu$ reverses the orientation of the former, while keeping the one of the latter. Hence we are in the setting for applying \Cref{CinftyMorseSmale}. There exists another vector field $X' \in \XX(S)$, that is Morse--Smale, anti-invariant, and $C^\infty$-close to $X$. By \Cref{charinducescontact} we can find an anti-invariant contact form $\alpha'$, on a close neighborhood of $S$, inducing $X'$. Moreover it is $C^\infty$-closed to $\alpha$, and it can be made to coincide with the latter away from $S$.
		
	Hence we can define $\alpha_t$ as $t\alpha' + (1-t)\alpha$, which is a contact form since $\alpha$ and $\alpha'$ are sufficiently closed. Thanks to \Cref{EGray} we can find an equivariant isotopy $\psi_t$ of $\R^3$ such that $T\psi_t(\xi_t) = \xi$ \footnote{To be precise, \Cref{EGray} produces $\psi_t^{-1}$.}. Observe that $\psi_1^{-1}(S)$ has the desired characteristic foliation. In fact, if we denote with $\phi$ the restriction of $\psi_1$ on $\psi_1^{-1}(S)$, and if we impose the area form $\phi^*\Omega$ on $\psi_1^{-1}(S)$, then 
	\[
		i_{\phi^*X'}\, \phi^*\Omega = \phi^*(i_{X'}\Omega) = \phi^*j^*\alpha_1 = k^*\psi_1^*\alpha_1 = f\,k^*\alpha\,,
	\]
	with $j$ and $k$ the inclusion of $S$ and $\psi^{-1}_1(S)$ in $\R^3$, and $f$ a positive function.
\end{proof}

Let's enter in the main realm of this section: the one of convex surfaces. Recall that a \emph{contact vector field} with respect to a contact structure $\xi$ is a field such that its flow preserves the plane distribution. Moreover the set of smooth functions from $\R^3$ (or $S^3$) to $\R$ and the one of contact vector field are in bijection through the following assignments \cite[Thm.\ 2.3.1]{geigesIntroductionContactTopology2008}:
\[
\begin{split}
	 X &\mapsto \alpha(X)\\
	 H &\mapsto X_H \text{ such that } \alpha(X_H) = H \text{ and } i_{X_H} d\alpha = dH(R_\alpha)\alpha-dH.	 
\end{split}
\]
\begin{defin}
	A $u$-oriented compact surface $S$ is said to be \emph{convex} if there exists a contact vector field which is invariant, defined near and transverse to $S$.
\end{defin}
\begin{prop}
	An $u$-oriented surface $S$ is convex with respect to an invariant contact structure $\xi$ if and only if there exists an equivariant embedding $S \times \R \to \R^3 $ such that $\Psi^*\alpha$ determines a vertically invariant contact structure which is $u$-invariant.
\end{prop}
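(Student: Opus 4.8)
The plan is to prove the two implications separately, with the forward direction being the heart of the matter. The key observation is that an invariant contact vector field $X$ transverse to $S$ is exactly the infinitesimal data needed to build the flow box: its flow $\phi_t$ gives an embedding $\Psi \colon S \times \R \to \R^3$ by $\Psi(p,z) = \phi_z(p)$, and since the flow of a contact vector field preserves $\xi$ (equivalently, rescales the contact form), the pulled-back form $\Psi^*\alpha$ is invariant under the vertical translation $(p,z) \mapsto (p, z+s)$ up to a positive conformal factor, which is precisely what ``vertically invariant'' should mean after normalizing. For the equivariance and anti-invariance, I would use \Cref{tubsurf} to arrange coordinates so that $u(p,z) = (u(p), z)$; then I must check that $\Psi$ is $u$-equivariant. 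This follows because $X$ is invariant ($u^*X = X$) so its flow satisfies $u \circ \phi_t = \phi_t \circ u$, hence $\Psi(u(p), z) = \phi_z(u(p)) = u(\phi_z(p)) = u(\Psi(p,z))$; and since $u^*\alpha = -\alpha$, the form $\Psi^*\alpha$ is anti-invariant.

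First I would set up the forward direction: given the invariant contact vector field $X$ transverse to $S$, define $\Psi(p,z) := \phi_z(p)$ where $\phi_t$ is the (time-independent, hence complete near $S$) flow of $X$. Transversality of $X$ to $S$ makes $\Psi$ a local diffeomorphism along $S \times \{0\}$, so after shrinking it is an embedding of $S \times (-\epsilon, \epsilon)$, which we rescale to $S \times \R$. Then $\partial_z$ pushes forward to $X$, so $\Lie_{\partial_z}(\Psi^*\alpha) = \Psi^*(\Lie_X \alpha) = \Psi^*(g\alpha)$ for the function $g$ with $\Lie_X\alpha = g\alpha$ (which exists because $X$ is contact), i.e.\ the contact form is vertically invariant in the conformal sense. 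The anti-invariance and equivariance follow from the two displayed computations above. The contact condition is automatic since $\Psi$ is a diffeomorphism onto its image.

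For the converse, suppose such an equivariant $\Psi$ is given with $\Psi^*\alpha$ vertically invariant and $u$-invariant (anti-invariant). Then on $S \times \R$ the vector field $\partial_z$ is a contact vector field for $\Psi^*\alpha$ (its flow is vertical translation, which preserves the vertically invariant structure), it is transverse to $S \times \{0\}$, and it is anti-invariant with respect to $u(p,z) = (u(p),z)$ in the sense that $u^*\partial_z = \partial_z$ — but since $\Psi^*\alpha$ is anti-invariant, the associated contact vector field transforms correctly, so pushing $\partial_z$ forward by $\Psi$ yields an invariant contact vector field on $\R^3$ defined near and transverse to $S$. The main subtlety — and the step I would be most careful about — is pinning down the precise meaning of ``vertically invariant'' and checking it is equivalent to ``$\Lie_{\partial_z}$ acts conformally'': one wants $\Psi^*\alpha = \beta_z + w_z\,dz$ with $(\beta_z, w_z)$ depending on $z$ only up to an overall positive scaling, matching the $\beta_z + w_z\,dz$ normal form from the introduction; here the correspondence between contact vector fields and functions via $H = \alpha(X)$ recalled just before the statement is what makes the bookkeeping of the conformal factor and the anti-invariance of $H$ work out cleanly.
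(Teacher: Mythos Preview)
Your overall strategy is right and matches the paper's: use the flow of the invariant contact vector field to build the product embedding, and push $\partial_z$ forward for the converse. The equivariance check $u\circ\phi_t=\phi_t\circ u$ from $u^*X=X$ is exactly what is needed, and the converse is fine.

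The gap is the clause ``which we rescale to $S\times\R$''. A nonlinear reparametrisation $\sigma\colon\R\to(-\epsilon,\epsilon)$ of the vertical coordinate destroys vertical invariance: in the new coordinate one has $\partial_{z'}=\sigma'(z')\,\partial_z$, and a function multiple of a contact vector field is in general \emph{not} contact, since $\Lie_{fX}\alpha=f\,\Lie_X\alpha+\alpha(X)\,df$ and the extra term $\alpha(X)\,df$ is not a multiple of $\alpha$. Concretely, if the $z$-invariant form is $\beta+w\,dz$ with $\beta,w$ independent of $z$, after rescaling it reads $\beta+w\,\sigma'(z')\,dz'$, whose kernel visibly depends on $z'$. (Also, ``time-independent, hence complete near $S$'' is not a valid inference; autonomy alone does not give completeness.) The paper avoids rescaling altogether: it replaces $H=\alpha(X)$ by an anti-invariant cutoff of $H$ that vanishes away from $S$, and lets $Y$ be the contact vector field with this new Hamiltonian. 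Then $Y$ is still invariant, still equals $X$ near $S$ (hence transverse there), but is now compactly supported, so its flow is complete and $\Psi(p,t)=\psi^{Y}_t(p)$ is defined on all of $S\times\R$ with $\Psi_*\partial_t=Y$ a genuine contact vector field. That cutoff trick on the contact Hamiltonian is the missing idea.
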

\begin{proof}
	Let's consider the anti-invariant function $H \coloneqq \alpha(X)$, with $X$ the contact transverse vector field. Let's perturb $H$ such that it vanishes away from $S$ by remaining anti-invariant. The new contact vector field $Y$ associated to $H$ is invariant, and
	\[
		\Psi(x,t) \coloneqq \psi_t^Y(p)
	\]
	is the required embedding. Moreover, since $u^*Y = Y$, then $\psi_t$ is $u$-equivariant in the spacial argument, i.e.\ $\Psi$ is equivariant in the sense of our convention on $S \times \R$. Hence $\Psi^*\alpha$ is anti-invariant. And since
	\[
		T\Psi(p,t)(\partial_t)= \dot{\psi}^Y_t(p)= Y(\psi^Y_t(p))\,,
	\]
	we have that $\Ker(\Psi^*\alpha)$ is vertically invariant, since $Y$ is a contact vector field.
	
	For the converse, just set $Y \coloneqq T\Psi(\partial_t)$.
\end{proof}
\begin{prop}
	In the previous situation we can suppose that $\Psi^*\alpha$ is a vertically invariant, and anti-invariant, contact form.
\end{prop}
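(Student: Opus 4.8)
The plan is to run the classical rescaling trick — turning a vertically invariant contact \emph{structure} into a vertically invariant contact \emph{form} — while checking that it can be performed $u$-equivariantly; this last point is the only real issue, everything else being a routine one-variable computation. Write $\tilde\alpha := \Psi^*\alpha$, which by the preceding proposition is a $1$-form on $S\times\R$ that is anti-invariant and whose kernel is vertically invariant; equivalently $\partial_t$ is a contact vector field for $\Ker\tilde\alpha$, so there is a smooth $g\colon S\times\R\to\R$ with $\Lie_{\partial_t}\tilde\alpha = g\,\tilde\alpha$. It then suffices to produce a smooth positive $u$-invariant function $f$ on $S\times\R$ with $\Lie_{\partial_t}(f\tilde\alpha)=0$: being a positive multiple of $\tilde\alpha$, the form $f\tilde\alpha$ is contact with the same (vertically invariant) kernel; it is vertically invariant as a form by construction; it remains anti-invariant since $u^*(f\tilde\alpha)=(f\circ u)\,u^*\tilde\alpha=-f\tilde\alpha$; and it equals $\Psi^*\alpha'$ for the rescaled contact form $\alpha':=(f\circ\Psi^{-1})\,\alpha$, which is the assertion.

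To see that $f$ can be chosen $u$-invariant, I would first record the symmetry of $g$. The action on $S\times\R$ is $(x,t)\mapsto(u(x),t)$, so its differential fixes $\partial_t$ and hence $u^*$ commutes with $\Lie_{\partial_t}$. Pulling back $\Lie_{\partial_t}\tilde\alpha = g\,\tilde\alpha$ by $u^*$ and using $u^*\tilde\alpha = -\tilde\alpha$ gives $-\Lie_{\partial_t}\tilde\alpha = -(g\circ u)\,\tilde\alpha$, whence $g\circ u = g$ because $\tilde\alpha$ never vanishes.

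Finally I would solve the linear ODE in the $\R$-direction and set
\[
	f(x,t) := \exp\Bigl(-\int_0^t g(x,s)\,ds\Bigr)\,,
\]
which is smooth and positive, satisfies $\partial_t f = -g f$, and therefore yields $\Lie_{\partial_t}(f\tilde\alpha) = (\partial_t f + g f)\,\tilde\alpha = 0$; moreover $f\circ u = f$, since $g\circ u = g$ makes the primitive $\int_0^t g(x,s)\,ds$ invariant. This $f$ has all the required properties, so the proof is complete. If one insists on $\alpha'$ being a contact form on all of $\R^3$ (or $S^3$) rather than on the tubular neighborhood $\Psi(S\times\R)$, one extends $f\circ\Psi^{-1}$ to a positive $u$-invariant function globally; this only alters $\alpha$ away from $S$ and leaves vertical invariance on the neighborhood untouched.
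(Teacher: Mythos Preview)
Your proof is correct and follows essentially the same line as the paper's own argument: both multiply $\Psi^*\alpha$ by $\exp\bigl(-\int_0^t g(x,s)\,ds\bigr)$ to kill the $\partial_t$-Lie derivative, after noting that the proportionality function $g$ (the paper's $\mu$) is $u$-invariant. You give a slightly more explicit justification for $g\circ u=g$ than the paper does, but the approach is identical.
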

\begin{proof}
	Keeping the notation of the previous proof, we just proved that $\Lie_{\partial_t}(\Psi^*\alpha) = \mu \Psi^*\alpha$, with $\mu\circ u$ coinciding with $\mu$. And since
	\[
		\Lie_{\partial_t}(\lambda \Psi^*\alpha) = \biggl(\frac{\dot\lambda}{\lambda} + \mu\biggr)\cdot(\lambda\Psi^*\alpha)
	\]
	holds, we just need to adjust the form by picking 
	\[
		\lambda \coloneqq \exp\Bigl(-\int_0^t\mu(p,s)\,ds\Bigr)\,.
	\]
	
	Moreover
	\[
		\lambda \circ u = \exp\Bigl(-\int_0^t(\mu \circ u)(p,s)\,ds\Bigr) = \lambda\,.
	\]
	
	Hence $\lambda\Phi^*\alpha$ is a contact form with the desired properties.
\end{proof}
As in the non-equivariant scenario, the information about a convex surface is contained in ``small'' family of circles, called \emph{dividing curves}.
\begin{defin}\label{defconvex}
	Let $\FF$ be a singular foliation on a $u$-oriented closed surface $S$. An invariant collection $\Gamma$ of embedded circles is said to divide $\FF$ if
	\begin{enumerate}
		\item $\Gamma$ is transverse to $\FF$; in particular, it does not meet the singularities of $\FF$,
		\item there is an invariant area form $\Omega$ on $S$ and an anti-invariant vector field $X$ defining $\FF$ such that
		\[
		\Lie_X\Omega \neq 0
		\]
		on $S \setminus \Gamma$, and with $S_\pm = \{p \in S \mid \pm \diver_\Omega(X) > 0\}$. so that if $S \setminus \Gamma = S_+ \sqcup S_-$, the vector field $X$ points out of $S_+$ along $\Gamma$.
	\end{enumerate}
\end{defin}
\begin{prop}\label{divconvex}
	If a closed, $u$-oriented compact surface $S$ is divided by a collection $\Gamma$ of embedded circles, then $S$ is convex with respect to $\xi$.
\end{prop}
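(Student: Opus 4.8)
The plan is to build a vertically invariant model contact form on the abstract product $S \times \R$, whose characteristic foliation on $S \times \{0\}$ is the given one, and then transport it into $\R^3$ via the equivariant Giroux theorem, thereby exhibiting an invariant contact vector field transverse to $S$. Let $(\Omega, X, S_\pm)$ be the data witnessing that $\Gamma$ divides $\FF = S_\xi$ as in \Cref{defconvex}: thus $\beta_0 \coloneqq i_X\Omega$ is anti-invariant, $\diver_\Omega(X)$ is anti-invariant (one checks $\diver_\Omega(X)\circ u = -\diver_\Omega(X)$ from $u^*X = -X$ and $u^*\Omega = \Omega$, so $u$ swaps $S_+$ and $S_-$), one has $\pm\diver_\Omega(X) > 0$ on $S_\pm$, and $X$ points out of $S_+$ along $\Gamma$. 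Extend the action to $S \times \R$ by $u(p,z) = (u(p),z)$. I want an \emph{anti}-invariant function $w \colon S \to \R$ with $w^{-1}(0) = \Gamma$ (transversally), $\pm w > 0$ on $S_\pm$, and
\[
	w\,d\beta_0 + \beta_0 \wedge dw > 0 \quad\text{on } S .
\]
Given such a $w$, the form $\alpha' \coloneqq \beta_0 + w\,dz$ is anti-invariant (here $dz$ is invariant and $w$ anti-invariant, so $u^*\alpha' = -\alpha'$) and, by \eqref{eq:contcond} with $\beta_z \equiv \beta_0$ and $w_z \equiv w$, the displayed inequality is exactly the condition for $\ker\alpha'$ to be a vertically invariant contact structure on $S \times \R$, for which $\partial_z$ is an invariant contact vector field transverse to $S \times \{0\}$, inducing there the singular foliation $\FF$.

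Constructing $w$ is the technical core, and I expect it to be the main obstacle. Fix an invariant Riemannian metric on $S$ and let $\tau$ be the signed distance to $\Gamma$ in a small neighbourhood of $\Gamma$, taken positive on $S_+$; since $u$ is an isometry with $u(\Gamma) = \Gamma$ and $u(S_+) = S_-$, the function $\tau$ is anti-invariant. A direct local computation near a point of $\Gamma$, using that $X$ is transverse to $\Gamma$ and points out of $S_+$, shows $\beta_0 \wedge d\tau = (\text{positive function})\cdot\Omega$ along $\Gamma$; as $\beta_0 \wedge d\tau$ and $\Omega$ are $u$-invariant, the set $U \coloneqq \{(\beta_0\wedge d\tau)/\Omega > 0\}$ is an invariant open neighbourhood of $\Gamma$. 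Choose an odd $g \colon \R \to \R$ with $g'(0) > 0$, $g(t) > 0$ for $t > 0$, and $g \equiv \pm\epsilon$ for $\pm t \ge \delta$, where $\delta$ is small enough that $\{|\tau| \le \delta\} \subseteq U$ and avoids the (finitely many) singularities of $X$; set $w \coloneqq g\circ\tau$ on $\{|\tau| < \delta\}$ and $w \coloneqq +\epsilon$ (resp.\ $-\epsilon$) on the part of $S \setminus \{|\tau| < \delta\}$ contained in $S_+$ (resp.\ $S_-$). Then $w$ is smooth, anti-invariant, $w^{-1}(0) = \Gamma$ with $dw|_\Gamma = g'(0)\,d\tau|_\Gamma \neq 0$, and $\pm w > 0$ on $S_\pm$. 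For the inequality, $w\,d\beta_0 = (g\circ\tau)\,\diver_\Omega(X)\,\Omega$ is a non-negative multiple of $\Omega$ everywhere (both factors flip sign across $\Gamma$) and is strictly positive off $\Gamma$, while $\beta_0 \wedge dw = g'(\tau)\,\beta_0\wedge d\tau$ is supported in $U$, where it is again a non-negative multiple of $\Omega$, strictly positive on $\Gamma$; so at each point at least one summand is a positive multiple of $\Omega$ and the other is non-negative, and the sum is positive.

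Finally, I would import the model: $S \times \{0\}$ inside $(S \times \R, \ker\alpha')$ and $S$ inside $(\R^3, \xi)$ are $u$-oriented surfaces whose characteristic foliations coincide as singular foliations (both represented by $X$), so the argument of \Cref{EGiroux}, applied with $\phi = \id$, produces invariant tubular neighbourhoods and an equivariant contactomorphism $\psi$ from a neighbourhood of $S \times \{0\}$ in the model onto a neighbourhood of $S$ in $\R^3$ with $\psi(S \times \{0\}) = S$. Then $\psi_*\partial_z$ is a contact vector field (image of one under a contactomorphism), invariant because $\psi$ is equivariant and $\partial_z$ is invariant, and transverse to $S$; by definition $S$ is convex with respect to $\xi$. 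The only genuine subtlety is the equivariant bookkeeping — in particular that $w$ is forced to be anti-invariant, which both reflects $u$'s swapping of $S_\pm$ and is precisely what makes $\alpha'$ anti-invariant — once that is pinned down, the argument is a faithful equivariant transcription of the classical proof of Giroux's criterion.
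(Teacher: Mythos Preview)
Your proof is correct and follows the same overall architecture as the paper: build a vertically invariant model $\alpha' = \beta_0 + w\,dz$ on $S\times\R$ inducing $S_\xi$ on $S\times\{0\}$, then invoke the equivariant Giroux theorem to pull back the transverse contact field $\partial_z$. The one implementation difference is worth noting. For the interpolation function $w$, the paper parametrises a collar of $\Gamma$ by the \emph{flow of $X$}, writes $w = g\cdot h$ with $h$ an explicit integrating factor built from $\diver_\Omega(X)$, and then handles the equivariance separately for $u$-invariant components of $\Gamma$ versus components swapped by $u$. You instead take $w = g\circ\tau$ with $\tau$ the signed distance to $\Gamma$ for an invariant metric. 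This is a cleaner move: anti-invariance of $\tau$ (and hence of $w$) is automatic from $u(S_+)=S_-$, so no case analysis on the components of $\Gamma$ is needed. The cost is that you must check the sign of $\beta_0\wedge d\tau$ near $\Gamma$ directly, which you do correctly via the outward-pointing condition on $X$.

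One small point to make explicit: when you assert that $\beta_0\wedge dw = g'(\tau)\,\beta_0\wedge d\tau$ is a non-negative multiple of $\Omega$, you are using $g'\ge 0$, i.e.\ that $g$ is non-decreasing; this is compatible with your listed constraints on $g$ but should be stated. With that addition the argument is complete.
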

\begin{proof}
	Assume that $S_\xi$ is divided by some set $\Gamma$. We need to show that $S$ is convex. Let $\Omega$ and $X$ be the area form and vector field, respectively, from \Cref{defconvex}.
	
	Set $\beta = i_X\Omega$ and consider the $\R$-invariant 1-form $\alpha = \beta + w\,dz$ on a neighborhood $S \times \R$ of $S \times \{0\}$ in $M$, with $w$ an arbitrary anti-invariant smooth function on $S$, and with $z$ denoting the $\R$-coordinate. The singular foliation induced by $\alpha$ on $S$ is precisely $S_\xi$. The condition for $\alpha$ to be a contact form is given by Equation \eqref{eq:contcond}, that in this context becomes
	\[
	w \diver_\Omega(X) - X(w) > 0.
	\]
	On $S_\pm$, this condition is satisfied for $w \equiv \pm 1$.
	
	Now use the flow of $X$ to identify a neighborhood $A$ of $\Gamma$ in $S$, made up of a disjoint collection of annuli, with $\Gamma \times [-\epsilon, \epsilon]$, such that $\Gamma$ becomes identified with $\Gamma \times \{0\}$.
	
	Using again Bredon's result about invariant tubular neighborhoods, we can suppose that the previous diffeomorphism is such that every $u$-invariant component of $\Gamma$ is identified with a neighborhood of $S^1$, with $u$-action given by
	\[
		(\theta, s) \mapsto (-\theta,-s)\,.
	\]
	
	The parameter $s \in [-\epsilon, \epsilon]$ is chosen such that $X = -\partial_s$, so that the sign of $s$ accords with the sign in $S_\pm$. Now set
	\[
	h(\theta, s) \coloneqq \exp\left(-\int_0^{s} \diver_\Omega(X)(\theta, t)\,dt\right)
	\]
	and make the Ansatz
	\[
	w(\theta, s) = g(\theta, s) \cdot h(\theta, s).
	\]
	Then, observing that $X(w) = -\partial w/\partial s$, one can compute that
	\[
	w \diver_\Omega(X) - X(w) = \frac{\partial g}{\partial s}\,h(\theta, s).
	\]
	Therefore, $\alpha$ will be a contact form, coinciding away from $\Gamma$ with $\beta + dz$, if we choose $g$ to satisfy
	\begin{equation}\label{casesg}
	\begin{cases}
		\partial g/\partial s > 0, \\
		g(\theta, s) = \pm 1/h(\theta, s) \quad \text{near } s = \pm\epsilon.
	\end{cases}
	\end{equation}
	
	Such a smooth function $g(\theta,s)$ can indeed be found, at least if we ignore the $u$-action. We have thus constructed an $\R$–invariant contact structure $\Ker\alpha$ on $S\times \R$,	so $S$ is a convex surface with respect to this new contact structure. Moreover, since $\Ker\alpha$ induces the given characteristic foliation $S_\xi$, the germs of $\Ker\alpha$ and $\xi$ near $S$ coincide by \Cref{EGiroux}. This means that $S$ is also a convex surface for $\xi$.
	
	Moreover, on the invariant component of $\Gamma$ we have that $w \circ u$ coincides with $-w$. Indeed,
	\[
	\begin{split}
		h \circ u &= \exp\left(-\int_0^{-s} \diver_\Omega(X)(-\theta, t)\,dt\right) \\
		&= \exp\left(-\int_0^{s} -\diver_{\Omega}(X)(-\theta, -t)\,dt\right)\\
		&= \exp\left(-\int_0^{s} - u^*\diver_\Omega(X)(\theta, t)\,dt\right)\\
		&= \exp\left(-\int_0^{s} - \diver_{u^*\Omega}(u^*X)(\theta, t)\,dt\right)\\
		&= \exp\left(-\int_0^{s} \diver_\Omega(X)(\theta, t)\,dt\right) = h\,,
	\end{split}
	\]
	and by choosing $g$ to be odd with respect to $s$, we have that $g$, and hence $w$, is anti-invariant. Regarding the components exchanged by $u$, one just need to define the function $w$ one of the coupled component, and set $w \circ u$ as $-w$. 
	
	Finally, we can additionally  suppose that $g(p,0) = 0$, so that $\Gamma$ is actually the \emph{dividing set} with respect of the vector field $\partial_z$; see \cite[Def.\ 4.8.1]{geigesIntroductionContactTopology2008} for the relative definition.
\end{proof}
In the case of $S$ being a sphere, the tightness condition forces the dividing set to be as simple as possible.
\begin{prop}\label{divsetS2iscircle}
	Let $S$ be a closed, convex $u$-oriented surface. If a dividing set $\Gamma$ contains a circle $\gamma$ contractible in $S$, then any vertically invariant neighborhood of $S$ is overtwisted, unless $S = S^2$ and $\Gamma$ is connected.
\end{prop}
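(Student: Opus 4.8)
The essential observation is that overtwistedness of the vertically invariant contact structure $\ker(\beta+w\,dz)$ on a neighborhood $S\times(-\delta,\delta)$ of $S$ is a property of the underlying contact structure alone and is completely insensitive to the involution $u$: an overtwisted disk need not — and in general will not — be $u$-invariant. Consequently the statement to be proved is exactly the non-equivariant Giroux criterion for $\R$-invariant contact structures on $S\times\R$, and the plan is simply to quote it (see \cite{geigesIntroductionContactTopology2008}): if $\Gamma$ has a homotopically trivial component but we are not in the situation $S=S^2$ with $\Gamma$ connected, then $\ker(\beta+w\,dz)$ is overtwisted, the excluded case being the one that produces the standard tight $S^2\times\R$.

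For the reader's convenience I would also recall the shape of the classical argument. Let $\gamma\subseteq\Gamma$ be a contractible component, and among all disks in $S$ bounded by such components choose $\gamma$ so that the disk $D$ it bounds is innermost; then the interior of $D$ is disjoint from $\Gamma$, hence lies entirely in one of $S_+,S_-$, say $S_+$. Provided we are not in the excluded case, one can then exhibit an embedded circle $c\subseteq S$ that is disjoint from $\Gamma$, that is non-isolating in the sense of the Legendrian realization principle, and that bounds an embedded disk in the ambient $\R^3$ (resp.\ $S^3$): when $\Gamma$ is disconnected one takes $c$ to be a push-off of $\gamma$ into $S\setminus D$ chosen small enough to enclose exactly one component of $\Gamma$, and when $S\neq S^2$ one takes instead a suitable essential curve in $S\setminus D$. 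By Legendrian realization — applied after a $C^0$-small perturbation of $S$, which by \Cref{divconvex} and \Cref{EGiroux} changes neither convexity nor the dividing set up to contactomorphism — the curve $c$ is isotoped onto a Legendrian knot $\hat c$ lying in $S$ and disjoint from $\Gamma$, so the formula for the Thurston--Bennequin invariant of a Legendrian curve on a convex surface gives $\tb(\hat c)=-\tfrac{1}{2}\#(\hat c\cap\Gamma)=0$; since $\hat c$ bounds an embedded disk in $\R^3$ (resp.\ $S^3$), the Bennequin inequality $\tb(\hat c)\le -1$ for a Legendrian unknot in a tight structure is violated, so $\ker(\beta+w\,dz)$ cannot be tight.

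The genuinely delicate point — and the one I would lean on the classical literature for rather than redo — is the construction of the curve $c$ in the two borderline configurations, a single contractible dividing circle on a surface $S\neq S^2$ versus the lone dividing circle on $S^2$: this is exactly where the dichotomy in the statement is forced, since on $S^2$ with connected $\Gamma$ no such $c$ exists (and the structure really is the standard tight one), while in every other case one does exist, possibly only after enlarging the disk past $D$ to pick up precisely one component of $\Gamma$. The one remark specific to our setting is minor: the contact form $\beta+w\,dz$ produced in \Cref{divconvex} does not depend on the $\R$-coordinate, so the overtwisted disk can be pushed into an arbitrarily thin slab $S\times(-\delta,\delta)$, which is what is needed to conclude for \emph{any} vertically invariant neighborhood of $S$.
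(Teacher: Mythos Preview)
Your proposal is correct and follows the same approach as the paper: both recognize that the statement is purely non-equivariant and simply cite \cite[Prop.\ 4.8.13]{geigesIntroductionContactTopology2008}. Your additional sketch of the classical Legendrian-realization argument is a welcome elaboration, but the paper itself gives only the bare citation.
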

\begin{proof}
	See \cite[Prop.\ 4.8.13]{geigesIntroductionContactTopology2008}.
\end{proof}

With \Cref{divconvex} in mind, we want to prove that in the case of $S$ being a sphere and contact structure being tight the convex condition for a surface is in fact generic. Before doing that, we need some definition regarding surfaces isomorphic to annuli.
\begin{defin}
	Let $A$ be an annulus $S^1 \times [-1,1]$ equipped with a orientation preserving $\Z/2\Z$-action $u$. A \emph{nerve} $N$ is an invariant set of the form $\{(\theta, f(\theta))\}$, where $f$ is a smooth function $S^1 \to [-1,1]$, such that the action on $N$ is given by $\theta \mapsto -\theta$.
\end{defin}
\begin{lemma}\label{nerve}
	Suppose that the action $u$ satisfies the following properties:
	\begin{enumerate}
		\item at every fixed point the differential, viewed as an endomorphism of the tangent space, is minus the identity;
		\item the action can be written as $(\theta,t) \mapsto (g(\theta), h(\theta,t))$;
		\item $h(\theta,\cdot)$ reverses the orientation of the interval for every $\theta$.
	\end{enumerate}
	Then $A$ has a nerve.
\end{lemma}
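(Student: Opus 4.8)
The plan is to construct the graph $N = \{(\theta, f(\theta))\}$ directly, by finding a smooth function $f\colon S^1 \to [-1,1]$ with the property that the involution $u$ sends $(\theta, f(\theta))$ to $(g(\theta), f(g(\theta)))$ and, restricted to $N$, acts as $\theta \mapsto g(\theta)$. First I would observe that the two hypotheses on $u$ force $g$ to be an orientation-preserving involution of $S^1$; since $u$ has fixed points (condition 1) and at such points the differential is $-\mathrm{id}$, the map $g$ must itself have fixed points, and an orientation-preserving involution of $S^1$ with a fixed point is either the identity or is conjugate to $\theta \mapsto -\theta$ with exactly two fixed points. (If $g = \id$, then condition 3 forces every $h(\theta,\cdot)$ to be an orientation-reversing involution of $[-1,1]$ hence to have a unique fixed point $f(\theta)$, and one checks this depends smoothly on $\theta$ by the implicit function theorem applied to $t \mapsto h(\theta,t) - t$, whose $t$-derivative at the fixed point is negative; that graph is the nerve, with trivial action.)

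In the main case $g \neq \id$, I would normalize so that $g(\theta) = -\theta$ in the coordinate, and look for an invariant graph. The key step is an averaging argument: start with \emph{any} smooth section, say the zero section $\theta \mapsto (\theta, 0)$, which need not be invariant, and push it forward and average. Concretely, for each $\theta$ consider the pair of points $0$ and $h(\theta, h(-\theta,0))$ — wait, more carefully: the condition $u(N) = N$ is equivalent to $h(\theta, f(\theta)) = f(-\theta)$ for all $\theta$. Composing, $h(-\theta, h(\theta, f(\theta))) = h(-\theta, f(-\theta)) = f(\theta)$, so $f(\theta)$ must be a fixed point of the involution $\tau_\theta \coloneqq h(-\theta,\cdot)\circ h(\theta,\cdot)$ of $[-1,1]$. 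Since each $h(\theta,\cdot)$ reverses orientation (condition 3), the composition $\tau_\theta$ \emph{preserves} orientation, so this alone does not pin down a unique point; instead I would fix the fixed point of $\tau_\theta$ by a direct averaging: set
\[
	f(\theta) \coloneqq \text{the midpoint, in a $u$-adapted sense, between $0$ and its orbit},
\]
but the cleanest route is: pick a smooth family of orientation-preserving diffeomorphisms $\phi_\theta\colon [-1,1] \to [-1,1]$ conjugating $h(\theta,\cdot)$ to the linear reflection $t \mapsto -t$ — such a family exists and can be chosen smoothly in $\theta$ because an orientation-reversing involution of an interval is smoothly conjugate to $t\mapsto -t$, with smooth dependence on parameters since the fixed point and the linearization vary smoothly. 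In these new fibre coordinates the action on $S^1 \times [-1,1]$ becomes $(\theta, t) \mapsto (-\theta, -t)$, and $N \coloneqq S^1 \times \{0\}$ is manifestly a nerve; transporting back by $\phi_\theta^{-1}$ gives $f(\theta) = \phi_\theta^{-1}(0)$, which is smooth, and conditions 1--3 guarantee the action on the resulting graph is $\theta \mapsto -\theta = g(\theta)$ as required.

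The main obstacle is establishing the \emph{smooth} dependence on $\theta$ of the conjugating diffeomorphisms $\phi_\theta$ (equivalently, of the fixed-point section $f$): one must rule out that the fixed point of $h(\theta,\cdot)$ runs off to an endpoint or that the conjugation degenerates. Condition 3 guarantees $t \mapsto h(\theta,t)-t$ is strictly decreasing near its zero for each $\theta$, so its unique interior zero $f_0(\theta)$ is simple and, by the implicit function theorem, smooth in $\theta$; from there a standard fibrewise normalization (Sternberg-type linearization for involutions, or the explicit averaging $\phi_\theta(t) = \tfrac12(t - h(\theta,t))$ followed by rescaling) produces the smooth family $\phi_\theta$. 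Once smoothness is in hand the rest is formal. I would also note that the case $g=\id$ handled above is consistent: there $\tau_\theta = \id$ fails, but $h(\theta,\cdot)$ is itself the involution and its fixed-point section is the nerve directly, so no global conjugation is needed.
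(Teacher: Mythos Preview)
Your underlying idea --- take the fixed-point section $f_0(\theta)$ of the fibre maps $h(\theta,\cdot)$ --- is sound and in fact yields a cleaner proof than the paper's, but the write-up contains two genuine errors that need fixing.

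First, $g$ is orientation-\emph{reversing}, not orientation-preserving: since $u$ preserves the orientation of $A$ and $h(\theta,\cdot)$ reverses that of the fibre, the Jacobian determinant $g'(\theta)\,\partial_t h$ forces $g'<0$. In particular the case $g=\id$ cannot arise, and your dichotomy ``identity or $\theta\mapsto-\theta$'' is the wrong one for orientation-preserving involutions anyway. The correct statement is that any orientation-reversing involution of $S^1$ is conjugate to $\theta\mapsto-\theta$; normalize to that.

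Second, and more seriously, $h(\theta,\cdot)$ is \emph{not} an involution for generic $\theta$: the relation $u^2=\id$ only gives $h(-\theta,\cdot)=h(\theta,\cdot)^{-1}$. So the phrase ``an orientation-reversing involution of an interval is conjugate to $t\mapsto -t$'' does not apply, and you cannot conjugate each $h(\theta,\cdot)$ individually to $-\id$. Your averaging map $\phi_\theta(t)=\tfrac12\bigl(t-h(\theta,t)\bigr)$ nevertheless does linearize the action, but for a different reason: using $h(-\theta,h(\theta,t))=t$ one computes
\[
\phi_{-\theta}\bigl(h(\theta,t)\bigr)=\tfrac12\bigl(h(\theta,t)-t\bigr)=-\phi_\theta(t),
\]
so in the coordinates $(\theta,\phi_\theta(t))$ the action becomes $(\theta,s)\mapsto(-\theta,-s)$. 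Even simpler, one can bypass the conjugation entirely: each $h(\theta,\cdot)$ is a strictly decreasing self-diffeomorphism of $[-1,1]$, hence has a unique interior fixed point $f_0(\theta)$, smooth in $\theta$ by the implicit function theorem; and since $h(-\theta,\cdot)$ is the inverse of $h(\theta,\cdot)$ they share that fixed point, so $f_0(-\theta)=f_0(\theta)$ and the graph of $f_0$ is already $u$-invariant with induced action $\theta\mapsto-\theta$. That is the nerve.

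For comparison, the paper argues differently: it computes the Lefschetz number $\Lambda_u=\Lambda_g=2$ to conclude $u$ has exactly two fixed points $(0,t_1)$, $(\pi,t_2)$, then builds the nerve by hand as $\Imm(m)\cup u(\Imm(m))$ for an arbitrary arc $m\colon[0,\pi]\to[-1,1]$ joining $t_1$ to $t_2$, using condition~1 only to guarantee smooth gluing at the endpoints. Your fixed-point-section approach is more canonical (no arbitrary arc, no Lefschetz computation), but as written it needs the corrections above before it goes through.
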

\begin{proof}
	Let's use the same ideas of \Cref{LefschetzS2}.
	
	Since $u$ is orientation-preserving, and since $h$ is orientation-reversing, then the map $g \colon S^1 \to S^1$ is an orientation-reversing diffeomorphism. Moreover, the Lefschetz number
	\[
		\Lambda_u \coloneqq \Tr(u_{*,0})- \Tr(u_{*,1})\,
	\]
	coincides with $\Lambda_g$. In fact $A$ is homotopically equivalent to $S^1$, with $u$ getting identified with $g$. 
	
	The trace $\Tr(g_{*,0})$ is $1$ since $A$ is connected. The trace $\Tr(g_{*,1})$ is $-1$ since $g$ is homotopy equivalent to a reflection, being orientation-reversing. Hence $\Lambda_u = \Lambda_g$ is $2$.
	
	Thus $u$, and $g$, admits exactly 2 fixed points. Let $(\theta_1, t_1)$ and $(\theta_2, t_2)$ be such fixed points. Up to rescaling, one can consider them to be $(0,t_1)$ and $(\pi,t_2)$. Finally, if we consider an arbitrary map $m \colon [0,\pi] \to [-1,1]$ a nerve is given by the set
	\[
		N \coloneqq \Imm(m) \cup u(\Imm(m))\,. 
	\]
	
	Since the differential action is minus the identity, the set $N$ glues \emph{a fortiori} to a smooth submanifold.
\end{proof}
\begin{prop}\label{MorseSmaleconvex}
	A closed surface $S$ that is $u$-oriented, with Morse--Smale foliation, is convex.
\end{prop}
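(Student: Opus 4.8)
The plan is to mimic the classical proof that a Morse--Smale characteristic foliation is divided by a curve, and then invoke \Cref{divconvex}. By \Cref{charinducescontact}, the Morse--Smale vector field $X$ defining $S_\xi$ satisfies $\diver_\Omega(X)(p) \neq 0$ whenever $X(p) = 0$; in particular every singularity is either a source-like region contributor (positive divergence) or sink-like (negative divergence). First I would use the Morse--Smale structure to build the sets $S_+$ and $S_-$ directly. Since there are no closed orbits (this is part of the Morse--Smale hypothesis, and we only use it in that direction), every trajectory limits to singularities; one can thicken the unstable manifolds of the positive-divergence singularities, together with small disks around the elliptic sources, to get an open region, and take $S_+$ to be a slight enlargement of it, with $S_-$ its complement. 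The common boundary $\Gamma$ will be a finite union of embedded circles avoiding all singularities, transverse to $X$, with $X$ pointing out of $S_+$ along $\Gamma$; this is exactly the classical Giroux construction (cf.\ \cite[§4.6]{geigesIntroductionContactTopology2008}).

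The genuinely new point is \emph{equivariance}: I must produce $\Gamma$, $\Omega$ and the region $S_+$ all invariant under $u$. The area form $\Omega$ is already $u$-invariant by the definition of $u$-oriented, and $X$ is anti-invariant, so $\diver_\Omega(X)$ is $u$-invariant; hence the open condition $\diver_\Omega(X) > 0$ already cuts out a $u$-invariant open set, and likewise $\diver_\Omega(X) < 0$. The two fixed points of $u|_S$ (there are exactly two, by \Cref{LefschetzS2}, and $X$ does not vanish there) lie in one of these two open sets each, since $\diver_\Omega(X) \neq 0$ there is not automatic — rather, I would note that at a fixed point $X \neq 0$, so the point lies away from all singularities, hence in the region where the sign of $\diver_\Omega(X)$ need not be constrained; what matters is that such a point is not on $\Gamma$ after the construction. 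To get $\Gamma$ invariant, I would carry out the thickening-of-unstable-manifolds construction $u$-equivariantly: the singularity set is $u$-invariant and $u$ sends positive-divergence singularities to positive-divergence singularities (divergence is $u$-invariant), and sends (un)stable manifolds to (un)stable manifolds; averaging the auxiliary bump functions and collar identifications over the $\Z/2\Z$-action — using invariant tubular neighborhoods as in \Cref{tubsurf} and the averaging trick used repeatedly above — yields an invariant $S_+$ with invariant boundary $\Gamma$ along which $X$ points outward. Finally, I would apply \Cref{divconvex} to conclude that $S$ is convex with respect to $\xi$.

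The main obstacle I anticipate is ensuring that $\Gamma$ can be taken \emph{transverse} to $X$ and disjoint from the fixed points simultaneously with being invariant: the two fixed points of $u|_S$ are precisely the kind of special points near which one is tempted to run the collar construction, but they are not singularities of $X$, so no orbit structure is forced there; one must check that the invariant enlargement of the unstable-manifold thickening can be chosen to miss small invariant disks around the two fixed points. Since those disks lie entirely inside one of $\{\diver_\Omega(X) > 0\}$, $\{\diver_\Omega(X) < 0\}$ (because $X \neq 0$ there and these open sets cover a neighborhood of each such point once one shrinks — here I would invoke that near a fixed point $X$ is anti-invariant and nonzero, so its flow carries a small invariant disk monotonically, placing it on one definite side), the classical freedom in choosing $\Gamma$ away from a prescribed closed set of non-singular points suffices, and the equivariant version costs only an averaging. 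A secondary, purely bookkeeping obstacle is the component of $\Gamma$ (if any) swapped by $u$ versus the invariant components meeting a fixed point — but this has already been handled in the proof of \Cref{divconvex}, so no new work is needed there.
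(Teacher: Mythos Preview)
There is a genuine gap, and it stems from a sign error that unravels the entire equivariance strategy. You assert that since $u^*\Omega=\Omega$ and $u^*X=-X$, the divergence $\diver_\Omega(X)$ is $u$-invariant. In fact it is \emph{anti}-invariant: from $u^*(i_X\Omega)=i_{u^*X}(u^*\Omega)=i_{-X}\Omega=-i_X\Omega$ one gets $u^*(d(i_X\Omega))=-d(i_X\Omega)$, hence $(\diver_\Omega X)\circ u=-\diver_\Omega X$. Two consequences follow immediately. First, $u$ does \emph{not} send positive-divergence singularities to positive-divergence singularities; it swaps them with the negative ones (equivalently, $u$ reverses the orientation of $\xi$ while preserving that of $S$, so it flips the sign of every singular point). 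Your proposed invariant $S_+$ built from positive singularities therefore cannot be $u$-invariant: $u$ carries it onto $S_-$. Second, at each of the two fixed points of $u|_S$ the divergence is forced to vanish, so these points can never lie in the interior of $S_+$ or $S_-$; the dividing curve $\Gamma$ must pass \emph{through} them, contrary to your stated goal of choosing $\Gamma$ disjoint from the fixed points.

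The paper's proof is organized around exactly this observation. One chooses $S^0_+$ and then \emph{defines} $S^0_-\coloneqq u(S^0_+)$, so that the annular region $A=S\setminus(S^0_+\cup S^0_-)$ is automatically $u$-invariant with $u$ exchanging its two boundary components. The dividing curve is then produced as a \emph{nerve} of each $u$-invariant annular component (\Cref{nerve}), which is precisely an invariant core circle passing through the two fixed points of the restricted involution; on components that are merely swapped by $u$ one works on one and transports to the other. The required area-form rescalings are chosen symmetrically so that the divergence condition of \Cref{defconvex} holds. Your averaging approach cannot be salvaged as written, because the object you want to average ($S_+$) is sent by $u$ to its complement rather than to itself.
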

\begin{proof}
	By \Cref{divconvex} it suffices to show that $S_\xi$ admits dividing curves. We retain the vector field $X$ defining $S_\xi$, but we are going to adjust the area form $\Omega$ on $S$.
	
	Let $S^0_\pm \subset S$ be submanifolds with boundary made up of discs around the elliptic points (of the corresponding sign) of $S_\xi$, annuli around the repelling or attracting periodic orbits, respectively, and bands around the stable or unstable separatrices of the positive or negative hyperbolic points, respectively. We may assume that $S^0_+ \cap S^0_- = \emptyset$ and that $X$ is transverse to the boundary of $S^0_\pm$.
	
	On $A := S \setminus (S^0_+ \cup S^0_-)$ the characteristic foliation is non-singular, transverse to the boundary, and without closed leaves. Hence $A$ is a collection of annuli; let's fix a diffeomorphism 
	\[
		h \colon A \to \bigsqcup_i S^1 \times [-1,1] \eqqcolon \bigsqcup_i S_i\,.
	\]	
	For every annulus $S_{\bimath}$ preserved by $u$ \Cref{nerve} fixes a nerve, and sets a diffeomorphism such that the action on $S_{\bimath}$ is of the form $(\theta,s) \mapsto (-\theta, -s)$.
	
	Moreover, we can suppose the sets $S_\pm^0$ to be $u$-invariant, by choosing $S^0_-$ and setting $S^0_-$ as $u(S^0_+)$. In fact, $u$ must reverse the sign of the singular points, since it keeps the orientation of the surface $S$ while reversing the one of the contact planes.
	
	On $A$ we impose an area form $\Omega_A$ of the form $f(s)\,d\theta\wedge ds$ such that $\pm\diver_{\Omega_A}(X) > 0$ on $A\cap S_\pm$. Indeed, one computes 
	\[
	\diver_{\Omega_A}(-\partial_s)=-\frac{f'}{f}\,,
	\]
	so we simply have to choose a positive function $f(s)$ that is strictly increasing	on $[-\epsilon,0)$ and strictly decreasing on $(0,\epsilon]$. Moreover we can evidently choose such $f$ such that is is even. In this way $u^*\Omega_A$ coincides with $\Omega_A$.
		
	Equation
	\[
		\diver_{g\Omega}(X)=\frac{X(g)}{g} +\diver_\Omega(X)
	\] 
	shows that the divergence at the singular points will not change if we rescale the area form to $g\Omega$, with $g: S \to \R^+$. The same equation shows that we may achieve $\operatorname{div}_{g\Omega}(X) > 0$ on $S^0_+$ by having $g$ grow fast enough along the flow lines of $X$ (and equal to some constant near each singular point). Likewise, we can ensure that $\operatorname{div}_{g\Omega}(X) < 0$ on $S^0_-$. If we suppose $f$ being $s$-even, we can take $g$ to be invariant.

	Let's now use the $s$ coordinate to patch $\Omega_A$ with $g\Omega$ to define a third volume form $\Omega'$. Evidently we can make the patching $u$-invariant by using a an even bump function.

	 Finally let's define $\Omega'' \coloneqq k\Omega'$, with $k$ a function that is $\equiv 1$ at near the boundary of our annuli, $u$-invariant, and equal to some large positive constant around $\Gamma$. If we make it grow fast enough, then $\pm\diver_{k\Omega''}(X) > 0$ on the complementary of $\Gamma \times [-\epsilon/3, \epsilon/3]$. Finally if the divergence condition of \Cref{defconvex} is achieved with $\Omega_A$, than it must be achieved also with $K\Omega_A$, hence also for $\Omega''$ near $\Gamma$.

	In conclusion an arbitrary nerve of $A$ divides $S_\xi$, and $S$ is convex by \Cref{divconvex}.
\end{proof}
\begin{cor}\label{Cinftyconvex}
	Given an $u$-oriented surface $S \subseteq \R^3$ diffeomorphic to $S^2$, and $\xi$ a tight invariant contact structure in $\R^3$, we can equivariantly $C^\infty$-perturb $S$ such that it becomes convex.
\end{cor}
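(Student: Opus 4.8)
The plan is to chain together the two main results of this section. \Cref{CinftysurfMorseSmale} already provides an equivariant $C^\infty$-small perturbation after which the characteristic foliation is Morse--Smale, and \Cref{MorseSmaleconvex} upgrades the Morse--Smale condition to convexity; so the corollary should follow almost formally, the only point to watch being that the perturbed surface still carries a $u$-invariant area form.

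Concretely, I would first apply \Cref{CinftysurfMorseSmale}. Inspecting its proof, the perturbation is realized by an equivariant isotopy $\psi_t$ of $\R^3$ produced by \Cref{EGray} from the straight-line interpolation $\alpha_t = (1-t)\alpha + t\alpha'$, where $\alpha'$ is anti-invariant and $C^\infty$-close to the contact form $\alpha$ of $\xi$. Hence $\psi_t$ is $C^\infty$-close to the identity, and $S' \coloneqq \psi_1^{-1}(S)$ is a $C^\infty$-small equivariant perturbation of $S$ whose characteristic foliation with respect to $\xi$ --- computed using the pulled-back area form $\phi^*\Omega$, with $\phi \coloneqq \psi_1|_{S'}$ and $\Omega$ the $u$-invariant area form on $S$ --- is Morse--Smale. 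Since $\psi_1$ is equivariant and $\Omega$ is $u$-invariant, $\phi^*\Omega$ is $u$-invariant too, so $S'$ is a genuine $u$-oriented surface.

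Then I would simply invoke \Cref{MorseSmaleconvex} for $S'$: it is a closed $u$-oriented surface with Morse--Smale characteristic foliation, hence convex with respect to $\xi$. As $S'$ is $C^\infty$-close to $S$ and equivariantly isotopic to it via $\psi_t^{-1}$ (an isotopy starting at the identity), this is precisely the desired perturbation. I do not expect a genuine obstacle here: all of the analytic work --- excluding closed leaves through tightness and \Cref{LefschetzS2}, the equivariant Peixoto-type perturbation, and the construction of dividing curves --- is already packaged in the two cited propositions, and the only bookkeeping point is the $u$-invariance of the area form on $S'$, checked above.
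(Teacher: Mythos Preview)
Your proposal is correct and follows exactly the same two-step route as the paper's proof: invoke \Cref{CinftysurfMorseSmale} to perturb $S$ so that its characteristic foliation is Morse--Smale, then apply \Cref{MorseSmaleconvex} to deduce convexity. You add a small bookkeeping check that the perturbed surface remains $u$-oriented via the pulled-back area form, which the paper leaves implicit but is indeed straightforward.
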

\begin{proof}
	Just use \Cref{CinftysurfMorseSmale} to make $S_\xi$ of type Morse--Smale, so that $S$ is convex by \Cref{MorseSmaleconvex}.
\end{proof}
\section{Equivariant Tomography Theory}\label{sec:tomography}

In this section we will look at the \emph{tomography theory}, i.e.\ the study of contact structures on $S^2 \times [-1,1]$ via the characteristic foliation on the slices $S^z \coloneqq S^2 \times \{z\}$.

We suppose the product $S^2 \times [-1,1]$ to be equipped with an involution $u$ such that
\begin{enumerate}
	\item the fixed-point set is non-trivial, made of two segments each connecting $S^2 \times \{-1\}$ with $S^2 \times \{1\}$;
	\item at each fixed point the differential is diagonalizable as $[1] \oplus [-1]\oplus[-1]$.
\end{enumerate}
We will call such scenario to be an \emph{equivariant tomography}. Observe that in such scenario the orientation of the boundary spheres must be preserved, with the differential of the involution at a fixed point restricting to $TS^{\pm1}$ as minus the identity.

With such setting in mind, the aim of this section is to prove the following result.
\begin{thm}\label{thm:mainthmtomographyStwo}
	Let $S^2 \times [-1,1]$ an equivariant tomography. The set of invariant tight contact structure on $S^2 \times [-1,1]$ having fixed characteristic foliation on the boundary, up equivariant isotopy rel boundary, is a $\Z$-torsor.
\end{thm}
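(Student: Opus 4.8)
The plan is to normalize the involution, to extract from the twisting of $\xi$ along the fixed locus a single integer invariant, and then to show that this integer classifies via an equivariant version of tomography. After a preliminary conjugation we may take $u$ to be the product of the rotation of $S^2$ by $\pi$ with the identity of $[-1,1]$; then $u$ preserves each slice $S^z$ and restricts there to an involution as in \Cref{LefschetzS2}, with two fixed points $p_1(z),p_2(z)$ tracing out the two fixed arcs $a_1,a_2$. At any point $p$ of a fixed arc, $d_pu$ fixes exactly the tangent line to the arc, and since $u^*\alpha=-\alpha$ this forces $\alpha_p$ to vanish there: the arcs $a_1,a_2$ are Legendrian. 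To $\xi$ we attach the integer $\tb(\xi)$, the twisting of the contact framing of $a_1$ relative to the framing of the vertical arc coming from the product $S^2\times\{\ast\}$; one checks that $\tb(a_1)-\tb(a_2)$ depends only on the fixed boundary foliation, so $\tb(\xi)\in\Z$ is the relevant datum, and it is manifestly unchanged by equivariant isotopies rel boundary. Finally the set is non-empty: the standard tight structure on $S^3$ is invariant, and its restriction to the region between two concentric spheres realizes our situation.

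\emph{Surjectivity.} Fix a reference $\xi_0$. For each $n\in\Z$ I would build an \emph{equivariant twisting layer}: an invariant tight piece on a collar $S^2\times[a,b]$, matching the prescribed convex behaviour on its two boundary spheres, across which the contact framing of each fixed arc makes one extra full turn relative to the product framing. Such a layer is produced by hand from an invariant tubular neighbourhood of a suitably stabilized Legendrian arc together with its $u$-image, keeping all intermediate slices convex with connected dividing set; tightness then follows from \Cref{divsetS2iscircle} (or from an ambient equivariant embedding into the standard tight $S^3$). Stacking $|n|$ such layers, with the appropriate orientation, onto $\xi_0$ changes $\tb$ by $n$, so every value in $\Z$ occurs.

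\emph{Injectivity, the main step.} Let $\xi_0,\xi_1$ be invariant and tight, with the same boundary foliation and $\tb(\xi_0)=\tb(\xi_1)$. Using \Cref{Cinftyconvex} slicewise, together with a genericity argument carried out $u$-equivariantly, I would equivariantly isotope $\xi_1$ rel boundary so that $z\mapsto S^z_{\xi_1}$ is generic: all but finitely many slices are convex, and at the exceptional levels a single bypass is born or dies. By tightness and \Cref{divsetS2iscircle}, every convex slice carries a connected dividing curve; being $u$-invariant on $S^2$ and bounding the regions $S^z_\pm$, which $u$ interchanges, it is of latitude type, with $p_1(z)$ and $p_2(z)$ in opposite complementary discs. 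Hence the movie of dividing curves is topologically constant and the only freedom between consecutive convex slices is the twisting measured by $\tb$. Now run the equivariant tomography: \Cref{EGiroux} and \Cref{Girouxbound} standardize, rel boundary, the layer between consecutive convex slices, while the finitely many bypass events occur in $u$-symmetric pairs or $u$-invariantly and — since $\tb(\xi_0)=\tb(\xi_1)$ — contribute zero net twisting, so they cancel equivariantly against the standard twisting layers of the previous step. Concatenating the resulting equivariant isotopies rel boundary carries $\xi_1$ to $\xi_0$.

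\emph{Conclusion and main obstacle.} Insertion of standard twisting layers defines a $\Z$-action on the set of equivariant isotopy classes rel boundary; surjectivity makes it transitive, injectivity makes it free, so the set is a $\Z$-torsor. The genuinely new difficulty, where the equivariant theory parts ways with the classical one, lies in the injectivity step: carrying out the bypass and tomography machinery $u$-equivariantly, and verifying that the twisting of the fixed arcs is the \emph{only} surviving invariant — in particular that every bypass can be attached respecting $u$ and that a $u$-symmetric bypass pair is equivalent to a standard twisting layer.
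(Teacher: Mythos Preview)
Your overall architecture is sound and the invariant you single out --- the contact framing of a fixed arc relative to the product framing --- is exactly the paper's $F$-twisting difference (Definition~\ref{twdif}). But the injectivity step, which you yourself flag as the crux, is handled quite differently in the paper, and your bypass-cancellation argument has a real gap.

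The paper does \emph{not} work with a film in which finitely many slices are non-convex and then argue that the bypass events cancel. Instead it shows (Proposition~\ref{maketomographyallconvex}) that after an equivariant isotopy rel boundary one can arrange \emph{every} slice $S^z$ to be convex. The mechanism is the equivariant elimination of retrograde hyperbolic--hyperbolic connections: these are the only obstruction to convexity once the film is generic in the sense of Proposition~\ref{genfilm}, and each one can be removed by an equivariant Giroux elimination of a nearby elliptic--hyperbolic pair (the configuration ruled out by Lemma~\ref{nocircleretrograde} is what prevents this from failing). Once all slices are convex, Lemma~\ref{choosezsmoothdividingset} produces a smooth family of dividing circles $\Gamma_z$, and the obstruction to matching two such families by a level-preserving equivariant isotopy is identified with an element of $\pi_1(\Loop^{eq}(S^2))\cong\Z$ (Proposition~\ref{EBaer}). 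When this integer vanishes the Uniqueness Lemma~\ref{uniqueness} finishes the job directly --- no bypass bookkeeping is needed.

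Your route, by contrast, leaves the non-convex levels in place and asserts that the associated bypass events ``cancel equivariantly against the standard twisting layers''. This is where the argument breaks down: you have not said what it means for a $u$-symmetric pair of bypasses on $S^2$ (or a single $u$-invariant bypass, if such a thing makes sense) to be equivalent to one of your twisting layers, nor why equal $\tb$ forces such a pairing-off. On $S^2$ a single classical bypass is trivial, but two simultaneous $u$-coupled bypasses interact with the equivariant isotopy class of the dividing curve in exactly the way that produces the $\Z$-torsor, so ``net twisting zero implies cancellation'' is precisely the statement that needs proof. The paper sidesteps this entirely by first achieving global slicewise convexity; I would recommend you do the same, and supply the loop-space computation $\pi_1(\Loop^{eq}(S^2))\cong\Z$ to pin down the torsor.
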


Before focusing to the tools required for proving \Cref{thm:mainthmtomographyStwo}, let's prove an important reduction result for our action. It will be preceded by a couple of technical lemmas.
\begin{lemma}\label{Palais}
	Given a compact manifold $M$, and a submanifold $N$, the natural map from $\Diff(M)$ to $\Emb(N,M)$ is a Serre fibration (where the spaces are equipped with the $C^\infty$-topology).
\end{lemma}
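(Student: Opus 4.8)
The plan is to deduce this from the parametrized isotopy extension theorem, in the spirit of Palais and Cerf. Recall that a continuous map is a Serre fibration precisely when it has the right lifting property against the inclusions $D^k\times\{0\}\hookrightarrow D^k\times[0,1]$ for every $k\ge 0$. So I would fix such a lifting problem for the restriction map $\rho\colon\Diff(M)\to\Emb(N,M)$, $\rho(\phi)=\phi|_N$: a continuous map $f\colon D^k\to\Diff(M)$ together with a homotopy $H\colon D^k\times[0,1]\to\Emb(N,M)$ satisfying $H(\cdot,0)=\rho\circ f$, and I must produce a lift $\widetilde H\colon D^k\times[0,1]\to\Diff(M)$ with $\widetilde H(\cdot,0)=f$ and $\rho\circ\widetilde H=H$.

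The key step is then purely formal once the right extension result is available. For each parameter $x\in D^k$ the path $t\mapsto H(x,t)$ is a smooth isotopy of embeddings of $N$ into $M$ beginning at $f(x)|_N$; since $M$ is compact all ambient isotopies at play are automatically compactly supported, and the parametrized isotopy extension theorem yields a continuous family $\Phi\colon D^k\times[0,1]\to\Diff(M)$ with $\Phi(x,0)=\id_M$ and $\Phi(x,t)\circ\bigl(f(x)|_N\bigr)=H(x,t)$. Setting $\widetilde H(x,t)\coloneqq\Phi(x,t)\circ f(x)$ solves the problem: at $t=0$ it equals $f(x)$, and its restriction to $N$ is $\Phi(x,t)\circ f(x)|_N=H(x,t)$.

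The real content, and the step I expect to be the main obstacle, is the parametrized isotopy extension theorem together with the correct continuity statement for the $C^\infty$-topologies. Concretely one must differentiate the moving submanifold $H(x,t)(N)\subset M$ in $t$ to obtain a time-dependent vector field along it, extend this to a compactly supported time-dependent vector field on all of $M$ using a tubular neighborhood of $N$ and cutoff functions, and integrate it to the flow $\Phi(x,t)$ — and then verify that the tubular neighborhood, the extension and the flow can all be chosen to depend continuously on $x$ in the appropriate $C^\infty$-topologies on sections, vector fields and diffeomorphisms. This is exactly Palais' local-triviality theorem for the restriction map, so in practice I would invoke that reference rather than redo the estimates. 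The same ideas in fact show that $\rho$ is a locally trivial bundle with fiber $\Diff(M\text{ rel }N)$, which a fortiori gives the Serre fibration property; only the weaker statement is needed here.
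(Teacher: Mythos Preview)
Your proposal is correct and follows essentially the same route as the paper: set up the disk-parametrized lifting problem, differentiate the family of embeddings in $t$ to get a time-dependent vector field along the image, extend it to an ambient compactly supported vector field, integrate to a flow $\Phi$, and post-compose with the initial lift; both you and the paper acknowledge the same residual issue (continuity in the $C^\infty$-topology for merely continuous $H$) and resolve it by invoking Palais' stronger local-triviality theorem.
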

\begin{proof}
	Actually more holds: by Palais the above map is a locally trivial fiber bundle \cite{PalaisLocaltrivialityoftherestrictionmap}. However, it is useful, for future reference, to have a partial proof.
	
	Consider an arbitrary map $\phi_{u,t}$ from $D^n \times I$ to $\Emb(N,M)$, such that $\phi_{u,t}$ is a smooth isotopy of embeddings from $N$ to $M$; fix a lift $\tau_u$ of $\phi_0$. We want to extend such lift to the whole interval $I$. Said otherwise, we need to find an isotopy $\sigma_{u,t}$ of diffeomorphisms such that $\sigma_{u,t}(N)$ coincides with $\phi_{u,t}(N)$ and $\sigma_{u,0}$ coincides with $\tau_u$.¨
	
	For every point $u \in D^n$  we define the following vector field:
	\[
		X_{u,t} \coloneqq \frac{d}{dt} \phi_{u,t} \in TM|_N\,.
	\]
	By fixing a Riemannian metric on $M$, one can canonically extend $X_{u,t}$ to a new vector field $\tilde{X}_{u,t}$ on the whole manifold $M$, that vanishes outside a small neighborhood of $N$. Hence, we just have to integrate such vector field to define a new isotopy $\Phi_{u,t}$ of $M$ such that
	\[
	\begin{cases}
		\Phi_{u,0} = \id_M \\
		\frac{d}{dt} \Phi_{u,t} = \tilde{X}_{u,t}
	\end{cases}\,.
	\]
	
	Since $\tilde{X}_{u,t}$ extended $X_{u,t}$ the equality
	\[
		\Phi_{u,t} \circ \phi_{u,0} = \phi_{u,t}
	\]
	holds by construction. Hence we define $\sigma_{u,t}$ as $\Phi_{\bullet,\bullet} \circ \tau_{\bullet}$. Then $\sigma_{u,0}$ coincides with $\sigma_u$, and by the previous equality $\sigma_{u,t}$ lifts $\phi_{u,t}$.
	
	Palais' argument is similar, and quite easy to understand. He uses vector fields and exponential maps to find local section of the map he claims to be locally trivial. Observe that the above reasoning is not complete, since \emph{a priori} one has to deal with continuous maps from $D^n \times I$ to $\Emb(N,M)$, without any hypothesis on smoothness in times. However, if $N$ and $M$ are low-dimensional, I think one could be able to slightly modify the above proof to be correct.
\end{proof}
\begin{prop}\label{InvolutionsS2}
	Consider the set $\mathcal{I}$ of involution of $S^2$ fixing two fixed points $s$, $s'$. Then the natural conjugation action, from $\Diff(S^2, s, s')$ to $\mathcal{I}$, is a Serre fibration.
\end{prop}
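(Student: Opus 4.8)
The plan is to mirror the proof of \Cref{Palais}: instead of exhibiting local sections, I would check the homotopy lifting property directly and build the lift by integrating an explicit time‑dependent vector field on $S^2$. Fix the standard rotation $u_0\in\mathcal I$ as a base point, so the conjugation action is encoded by $q\colon\Diff(S^2,s,s')\to\mathcal I$, $q(\phi)=\phi u_0\phi^{-1}$. That $q$ is onto (so that ``fibration'' is the right word) is the one place a genuine classification result enters: every orientation‑preserving involution of $S^2$ with exactly the two isolated fixed points $s,s'$ is conjugate to $u_0$ through an element of $\Diff(S^2,s,s')$ — the two‑dimensional version of the rigidity recalled in the introduction. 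Everything else is elementary.

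So suppose given a continuous — hence, after a harmless preliminary smoothing, smooth — family $\phi_{w,t}$, $(w,t)\in D^n\times I$, of involutions in $\mathcal I$, together with a lift $\tau_w\colon D^n\to\Diff(S^2,s,s')$ of the slice $\phi_{w,0}$, i.e.\ $\tau_w u_0\tau_w^{-1}=\phi_{w,0}$. For each $w$ set $V_{w,t}\coloneqq\dot\phi_{w,t}\circ\phi_{w,t}^{-1}$, a time‑dependent vector field on $S^2$ depending smoothly on $(w,t)$. Two observations: since each $\phi_{w,t}$ fixes $s$ and $s'$, the field $V_{w,t}$ vanishes at $s$ and $s'$; and differentiating $\phi_{w,t}\circ\phi_{w,t}=\id$ in $t$ yields $(T\phi_{w,t})(V_{w,t})=-V_{w,t}\circ\phi_{w,t}$, i.e.\ $V_{w,t}$ is ``anti‑invariant'' with respect to the involution $\phi_{w,t}$. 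This last identity is the infinitesimal form of the statement that $q$ is a submersion whose differential, on each fibre, is split by multiplication by $\tfrac12$ on anti‑invariant fields; it is what makes the naive lift below work.

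Now let $\sigma_{w,t}$ be the solution of $\dot\sigma_{w,t}=\tfrac12 V_{w,t}\circ\sigma_{w,t}$ with $\sigma_{w,0}=\tau_w$. Since $S^2$ is compact the flow is defined for all $t\in[0,1]$ and depends smoothly on $w$, and since $V_{w,t}$ vanishes at $s,s'$ each $\sigma_{w,t}$ fixes $s$ and $s'$; thus $\sigma_{w,\bullet}$ is a smooth family in $\Diff(S^2,s,s')$ extending $\tau_w$. The one computation that needs care is that $q(\sigma_{w,t})=\phi_{w,t}$. Both $\Psi_t\coloneqq\sigma_{w,t}u_0\sigma_{w,t}^{-1}$ and $\phi_{w,t}$ agree at $t=0$, and a direct differentiation gives $\dot\Psi_t=\tfrac12 V_{w,t}\circ\Psi_t-\tfrac12(T\Psi_t)(V_{w,t})$; substituting $\Psi_t=\phi_{w,t}$ and using $\dot\phi_{w,t}=V_{w,t}\circ\phi_{w,t}$ together with the anti‑invariance identity shows that $\phi_{w,t}$ solves this same ODE, so by uniqueness $\Psi_t=\phi_{w,t}$. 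Hence $\sigma_{w,\bullet}$ is the desired lift and $q$ is a Serre fibration.

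The genuine wrinkle is exactly the one flagged in the proof of \Cref{Palais}: a homotopy $D^n\times I\to\mathcal I$ is \emph{a priori} only continuous, whereas the construction needs enough $t$‑regularity to form $\dot\phi_{w,t}$ and to run the ODE with smooth dependence on $w$. One handles this either by a preliminary smoothing of $\phi_{w,\bullet}$ rel the already‑fixed lift at $t=0$, or — as the low dimension makes plausible — by reworking the estimates directly in the continuous category; in any case this is orthogonal to the main point, which is the $\tfrac12$‑trick made available by anti‑invariance of $V_{w,t}$.
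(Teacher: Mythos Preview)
Your approach is correct and genuinely different from the paper's. The paper argues geometrically: given an involution $g$ it chooses an arc $A$ from $s$ to $s'$, builds a diffeomorphism carrying $A\cup g(A)$ onto $A\cup g_0(A)$, and then invokes Palais-type extension to fill in the complementary disks; the parametric version is handled by the same mechanism as in \Cref{Palais}. Your construction is instead analytic and intrinsic: the $\tfrac12$-trick exploits the splitting of vector fields into invariant and anti-invariant parts under the involution, and produces an explicit lift by integrating $\tfrac12 V_{w,t}$. This avoids any auxiliary choice of arcs or appeals to disk models, and makes the dependence on $(w,t)$ transparently smooth. The paper's route, on the other hand, is closer to how one would actually \emph{compute} a conjugating diffeomorphism in practice, and connects the statement directly to the isotopy-extension machinery already set up.

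One point deserves tightening. Your uniqueness claim is phrased for the equation $\dot\Psi_t=\tfrac12 V_t\circ\Psi_t-\tfrac12(T\Psi_t)(V_t)$, but this is not a pointwise ODE: the right-hand side involves the spatial derivative $T\Psi_t$, so standard Picard--Lindel\"of does not apply as stated. The fix is immediate and stays within your framework: rather than comparing $\Psi_t$ and $\phi_t$, compare $\phi_t\circ\sigma_t$ and $\sigma_t\circ u_0$. A short computation using the anti-invariance identity shows that both satisfy the \emph{genuine} pointwise flow equation $\dot f_t=\tfrac12 V_t\circ f_t$, and they agree at $t=0$ because $\tau u_0\tau^{-1}=\phi_0$; hence they coincide for all $t$, which is exactly $q(\sigma_t)=\phi_t$. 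With this adjustment the argument is complete.
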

\begin{proof}
	The conjugation action is surjective, see \cite[Thm.\ 1]{EilenbergSurlestransformationsperiodiques}. However, more is true, that can be seen by delving into the cited proof. Let's fix an arch $A$ from $s$ to $s'$, and let's consider an involution $g \in \mathcal{I}$. Then the diffeomorphism conjugating $g$ to a reference involution $g_0$ can be easily build. Firstly, let's fix a standard map bringing $A \cup g(A)$ to $A \cup g_0(A)$. Then, the arbitrary comes into extending such map to diffeomorphism between the disks bounded by $A \cup g(A)$ and $A \cup g_0(A)$. At this point is simply a matter of extending embeddings, and we can conclude with Palais' work.
	
	Observe that if we start with a smooth isotopy $g_t \in \mathcal{I}$, then the proof of \Cref{Palais} tells us that we can find a smooth isotopy of diffeomorphisms $\psi_t$ fixing $s$ and $s'$ such that $\psi_t \circ g_t \circ \psi_t^{-1}$ coincides with $g_0$.
\end{proof}
\begin{prop}\label{standardizedtomography}
	There exists a diffeomorphism from $S^2 \times [-1,1]$ to itself that conjugates $u$ to an involution mapping $(s,z)$ to $(u|_{S}(s), z)$, where $S = S^2 \times \{-1\}$.
\end{prop}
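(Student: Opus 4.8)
The plan is to reduce, by a chain of conjugations, to the case where $u$ preserves each slice $S^2 \times \{z\}$, and then conclude by a parametrised form of Eilenberg's theorem, namely \Cref{InvolutionsS2}. To standardise $u$ near the boundary one first checks that $u$ preserves each boundary sphere: a component of $\mathrm{Fix}(u)$ meets $S^2 \times \{-1\}$ in a fixed point, so $u$ cannot interchange the two boundary components. Hence $v \coloneqq u|_{S^2 \times \{-1\}}$ and $v' \coloneqq u|_{S^2 \times \{1\}}$ are involutions of $S^2$ whose differential at each fixed point is forced to be $-\id$; by \Cref{LefschetzS2} each has exactly two fixed points, so (as in the proof of \Cref{InvolutionsS2}) each is conjugate to the rotation by $\pi$. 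Using a $u$-invariant metric and the argument of \Cref{tubsurf} I would build equivariant collars of the two boundary spheres, in which $u$ reads as $v \times \id$ and $v' \times \id$; then, choosing an orientation-preserving $\phi \in \Diff(S^2)$ with $\phi^{-1} v' \phi = v$ (compose a conjugator with a reflection commuting with $v'$ if needed) and a path $\phi_z$ from $\id$ to $\phi$ supported near the top, conjugating by $(s,z) \mapsto (\phi_z(s), z)$ makes $u$ equal to $v \times \id$ near \emph{both} boundary components. Every conjugation used is the identity near $S^2 \times \{-1\}$, so the germ $u|_S$ is never changed; finally, by isotopy extension (the fibration of \Cref{Palais}) I may also assume $\mathrm{Fix}(u)$ is the pair of vertical segments $\{s, s'\} \times [-1,1]$, with $\{s,s'\} = \mathrm{Fix}(v)$.

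For the main point — making $u$ preserve each slice — write $\pi \colon S^2 \times [-1,1] \to [-1,1]$ for the projection and put $f \coloneqq \pi \circ u$. Since $d\pi$ vanishes nowhere and $u$ is a diffeomorphism, $f$ is again a submersion onto $[-1,1]$ with $f^{-1}(\pm 1) = S^2 \times \{\pm 1\}$, and $u$ carries the $\pi$-fibres to the $f$-fibres respecting the level. Neither foliation is $u$-invariant, but the average $g_0 \coloneqq \tfrac12(\pi + f)$ satisfies $g_0 \circ u = g_0$, equals $z$ near the boundary, and along $\mathrm{Fix}(u)$ satisfies $dg_0 = dz \circ \tfrac12(\id + du)$, which is nonzero because the $(+1)$-eigenspace of $du$ there is exactly the vertical tangent line to $\mathrm{Fix}(u)$. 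The remaining critical points of $g_0$ lie in the interior, away from $\mathrm{Fix}(u)$, where $u$ acts freely; passing to the quotient $Y \coloneqq (S^2 \times [-1,1])/u$ — a compact simply connected $3$-manifold with two $2$-sphere boundary components, hence diffeomorphic to $S^2 \times [-1,1]$ — I would choose on $Y$ a submersion to $[-1,1]$ with the two boundary spheres as extremal level sets, agreeing with $\bar{g}_0$ near $\partial Y$ and immersive along the image of $\mathrm{Fix}(u)$, and pull it back to a $u$-invariant submersion $g \colon S^2 \times [-1,1] \to [-1,1]$ (the pull-back is a submersion off $\mathrm{Fix}(u)$ because there the quotient map is a covering, and near $\mathrm{Fix}(u)$ because it agrees there with $g_0$). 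Flowing a $u$-invariant vector field $V$ with $dg(V) \equiv 1$ (which exists since $dg$ is $u$-invariant), equal to $\partial_z$ near the boundary, produces an equivariant diffeomorphism straightening $g$ to $\pi$; in the resulting product structure $u$ preserves every slice, still equals $v \times \id$ near the boundary, and still has $u|_S = v$.

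It then remains to run the parametrised Eilenberg step. Now $u(s,z) = (u_z(s), z)$ for a smooth path $u_z$ of involutions of $S^2$, each with two fixed points, with $u_z = v$ for $z$ near $\pm 1$. Lifting the path $z \mapsto \mathrm{Fix}(u_z)$ through the restriction fibration of \Cref{Palais} (with the submanifold a pair of points) — first conjugating, if the two fixed arcs ``cross'', by a path running to a rotation of $S^2$ that commutes with $v$ and exchanges its fixed points — I may assume $\mathrm{Fix}(u_z) = \{s,s'\}$ for all $z$, so that $u_z$ is a path in the space $\mathcal I$ of \Cref{InvolutionsS2}, constantly equal to $v$ near the endpoints. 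The Serre fibration property in that proposition, applied over the interval with the partial lift $\psi_z \equiv \id$ prescribed near the endpoints, produces a path $\psi_z \in \Diff(S^2, s, s')$, equal to $\id$ near $\pm 1$, with $\psi_z v \psi_z^{-1} = u_z$. Then $(s,z) \mapsto (\psi_z(s), z)$ conjugates $u$ to $v \times \id = (u|_S) \times \id$, which is the desired form.

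The first and third parts are essentially bookkeeping on top of the equivariant collar construction and \Cref{InvolutionsS2}, and I expect the genuine difficulty to be the second part: producing a $u$-invariant sphere fibration of $S^2 \times [-1,1]$ that is standard near the boundary, equivalently cancelling the critical points of $g_0$ equivariantly. That step requires a little $3$-manifold topology — recognising the quotient $Y$ as $S^2 \times [-1,1]$ — together with some care along the fixed arcs, where the quotient map fails to be a submersion and so the pulled-back function must be checked by hand to remain a submersion.
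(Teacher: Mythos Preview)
Your argument is correct and follows the same skeleton as the paper's proof: pass to the quotient $Y=(S^2\times[-1,1])/u$, recognise it as $S^2\times[-1,1]$ (simply connected with two $S^2$ boundary components, hence a product by the $h$-cobordism theorem for surfaces), pull a projection back through the quotient to obtain a $u$-invariant sphere fibration, and then straighten the fixed arcs via \Cref{Palais} and apply \Cref{InvolutionsS2} slice by slice. The paper is more direct in the middle step --- it simply sets $h=p_2\circ\phi\circ\pi$ and asserts this is a submersion, then invokes Ehresmann --- whereas your construction of $g_0=\tfrac12(\pi+\pi\circ u)$ and the subsequent modification on $Y$ is aimed at a point the paper does not address: at a fixed point $d\pi$ has rank one, so $h$ is a submersion there only if the image arcs in $Q$ happen to be transverse to the chosen product slices, which is not automatic for an arbitrary identification $Q\cong S^2\times[-1,1]$. (A shorter fix in the paper's spirit is to choose that identification so the two image arcs become vertical; they join the two boundary spheres and are unknotted, so this is possible.) Your preliminary boundary standardisation and early straightening of the fixed arcs are extra work the paper omits, but they buy you that every conjugation is the identity near $S$, so the final involution is literally $u|_S\times\id$ rather than merely conjugate to it. One small imprecision: the pull-back of your chosen submersion on $Y$ is a submersion near $\mathrm{Fix}(u)$ not because it ``agrees there with $g_0$'' --- you only demanded it be immersive along the arcs --- but by the direct rank check: at a fixed point the image of $d\pi$ is exactly the tangent line to the image arc, on which your immersivity hypothesis gives a nonzero derivative, and off $\mathrm{Fix}(u)$ the quotient map is a local diffeomorphism.
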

\begin{proof}
	The properties of $u$ tell us that the fixed point set is a $1$-dimensional submanifold, and that the quotient $Q$ of $M \coloneqq S^2 \times [-1,1]$ by $u$ is a smooth orientable 3-manifold, with boundary given by the disjoint union of the two spheres coming from $M$.
	
	Moreover, Armstrong's Theorem for orbifolds \cite{ArmstrongThefundamentalgroupoftheorbitspaceofadiscontinuousgroup} can be applied: the quotient has fundamental group isomorphic to $G/H$, where $G = \Z/2\Z$ and $H$ is the subgroup made by the elements having at least one fixed point. Hence such fundamental group is trivial.
	
	So $Q$ is a simply connected 3-manifold with boundary $S^2 \sqcup S^2$. Thanks to the $h$-cobordism theorem between surfaces, holding due to Poincaré Conjecture's being proven, the manifold $Q$ is actually isomorphic to $S^2 \times [-1,1]$. Let $\phi$ be a diffeomorphism from $Q$ to such product manifold.
	
	Let's set $h$ to be the composition $p_2 \circ\phi \circ \pi$, where $\pi \colon M \to Q$ is the product map and $p_2$ is the projection onto the second factor.
	
	Such map is a surjective submersion, and $M$ is compact. Hence due to Ehresmann's Lemma \cite{EhresmannLesconnexionsininitesimales} such map is a locally trivial fiber bundle over $[0,1]$. Moreover, since such space is contractible, there exists a diffeomorphism of fiber bundles $\psi$ from $M$ to the trivial fiber bundle $F \times [0,1]$, where $F = S^2$.
	
	I claim that $\psi$ is the diffeomorphism we are looking for. In fact, consider a point $(s,z)$ in $S^2 \times [-1,1]$, and apply $\psi \circ u \circ \psi^{-1}$. The points $\psi^{-1}(s,z)$ and $u(\psi(s,z))$ are in the same $h$-fiber. Since $\psi$ sends $h$-fibers in $M$ to sets of the form $S^2 \times \{z\}$, then $\psi(u(\psi^{-1}(s,z)))$ must have the $z$-coordinate unchanged.
	
	So we arrived at a situation in which $u(s,z)$ is of the form $(g(s,z), z)$. We want to find a diffeomorphism of $M$ making $g$ independent of $z$. Firstly, since $u|_{S^z}$ preserves the orientation at the $z = \pm1$ levels, by continuity it must do as at every level. Hence, the fixed point set must now be made of two segments intersecting each level exactly ones: if not, a sphere $S^2 \times \{z\}$ would be equipped with an orientation-preserving involution having only regular fixed points, and such fixed point being more than two. This contradicts the computation done in \Cref{LefschetzS2}.
	
	As a consequence, we can consider the smoothly varying pair $(s_z,s'_z)$ of such fixed point. By the proof of \Cref{Palais} there exists a smooth family of diffeomorphisms $\psi_z$ such that $\psi_z(s_z, s'_z)$ equals $(s_0,s'_0)$. Hence, the diffeomorphisms $\psi_z \times \id_{[-1,1]}$ maps $M$ to itself, and $u$ to an involution having segments $\{s\} \times [-1,1]$ and $\{s'\} \times [-1,1]$ as fixed points.
	
	Finally, due to \Cref{InvolutionsS2} we can fix fix a smooth family of diffeomorphisms $\psi_z$ such that $\psi_z \circ g_z \circ \psi_z^{-1}$ coincides with $g_0$. The final diffeomorphism from $M$ to itself is then given by $\psi_z \times \id_{[-1,1]}$.
\end{proof}

So, up to changing coordinates, we can suppose that $u$ is of the form prescribed by \Cref{standardizedtomography}.
\subsection{Reconstruction and Uniqueness Lemmas}
Before dwelling into the proof of \Cref{thm:mainthmtomographyStwo}, let's state a couple of lemmas, that will indicate the direction we are trying to follow.
\begin{lemma}[Reconstruction Lemma]
	Let $S \times [-1,1]$ be an equivariant tomography, equipped with two invariant contact structures $\xi_0$ and $\xi_1$, such that the foliations $S^z_{\xi_i}$ coincide for each $z \in [-1,1]$. Then $\xi_0$ and $\xi_1$ are equivariantly isotopic rel boundary.
\end{lemma}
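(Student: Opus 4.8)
The plan is to interpolate between $\xi_0$ and $\xi_1$ and to apply the equivariant Gray stability theorem (\Cref{EGray}, in its $S^2 \times [-1,1]$ version with the obvious modification for a manifold with boundary, plus the rel-boundary refinement coming from the argument of \Cref{Girouxbound}). The subtlety is that two contact structures inducing the \emph{same} characteristic foliation on every slice need not be literally equal, so one first has to normalize the contact forms. Writing the structures in the coordinates of the tomography as $\alpha_i = \beta^i_z + w^i_z\,dz$, the hypothesis $S^z_{\xi_0} = S^z_{\xi_1}$ says exactly that $\beta^0_z$ and $\beta^1_z$ differ, for each fixed $z$, by multiplication by a positive function $f_z \colon S^2 \to \R^+$, which moreover is $u$-invariant (as in the proof of \Cref{EGiroux}, since $\phi = \id$ here preserves orientations). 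First I would replace $\alpha_1$ by $f_z\,\alpha_1$ (an invariant conformal change, hence the same $\xi_1$), so that afterwards $\beta^0_z = \beta^1_z$ for all $z$; the two forms now differ only in the $dz$-components $w^0_z, w^1_z$, both anti-invariant.

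Next I would set $\alpha_t \coloneqq (1-t)\alpha_0 + t\alpha_1 = \beta^0_z + \bigl((1-t)w^0_z + t w^1_z\bigr)dz$ for $t \in [0,1]$. The contact condition \eqref{eq:contcond} is affine in $w_z$ (and in $\dot\beta_z$), and here the $\beta$-part does not depend on $t$; hence the contact inequality for $\alpha_t$ is a convex combination of the inequalities for $\alpha_0$ and $\alpha_1$ and so holds for every $t$. Each $\alpha_t$ is anti-invariant because $\beta^0_z$ and the $w^i_z$ are. Moreover $\alpha_t$ is independent of $t$ on $S^{\pm 1}$ (the boundary foliations and the coorientation there are fixed), so $\dot\alpha_t$ vanishes on $\partial(S^2\times[-1,1])$; this is precisely the hypothesis needed to run the rel-boundary version of \Cref{EGray} as in \Cref{Girouxbound}, producing an equivariant isotopy $\psi_t$, fixing the boundary pointwise, with $T\psi_t(\xi_0) = \xi_t$ and hence $T\psi_1(\xi_0) = \xi_1$.

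The only points that require care — and the main obstacle — are (i) checking that \Cref{EGray} and the rel-boundary improvement of \Cref{Girouxbound} go through verbatim on the compact manifold $S^2 \times [-1,1]$ with boundary: the Moser-type vector field $X_t$ is constructed pointwise from the (non-degenerate) $d\alpha_t|_{\xi_t}$ exactly as before, and the same computation $u^*\mu_t = \mu_t$, $u^*X_t = X_t$ shows it is invariant, while the boundary system \eqref{eq:system} forces $X_t$ to vanish on the two boundary spheres so that the flow $\psi_t$ is defined for all $t$ and is the identity on $\partial$; and (ii) verifying that the conformal factor $f_z$ can indeed be chosen to depend smoothly on $z$ (it is determined by $\beta^0_z$ and $\beta^1_z$, which vary smoothly, so it does) and remains invariant (immediate from $u^*\beta^i_z = -\beta^i_z$, which forces $f_z\circ u = f_z$). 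Everything else is the routine convexity-of-the-contact-condition bookkeeping already used in the proof of \Cref{EGiroux}.
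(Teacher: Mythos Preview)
Your argument is correct and follows the same approach as the paper: normalize so that the slice $1$-forms $\beta_z$ agree, interpolate linearly, use convexity of the contact condition in $w_z$ with $\beta_z$ fixed, and then apply equivariant Gray in its \Cref{Girouxbound} form to get an isotopy rel boundary. You are in fact slightly more explicit than the paper, which simply asserts that one may write $\alpha_i = \beta_z + w^i_z\,dz$ with $\beta_z$ independent of~$i$; one tiny imprecision is that $\dot\alpha_t$ only vanishes on $T(\partial(S^2\times[-1,1]))$, not on the full tangent space at boundary points, but that is exactly the hypothesis \eqref{eq:system} needs, and you invoke it correctly.
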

\begin{proof}
	The assumption on the characteristic foliations implies that we can write
	\[
		\alpha_i = \beta_z + w^i_z\,dz\,,
	\]
	with $\beta_z$ independent of $i = 0,1$. Moreover, the condition
	\[
		 w_z^i d\beta_z +\beta_z \wedge (dw_z^i - \dot{\beta}_z) > 0
	\]
	is convex if $\beta_z$ is fixed. Hence
	\[
		\alpha_t \coloneqq t\alpha_1 + (1-t)\alpha_0
	\]
	is a contact form. It then follows from \Cref{EGray} that $\xi_0$ and $\xi_1$ are equivariantly isotopic; an isotopy rel boundary can be build as in \Cref{Girouxbound}, since the homotopy of contact forms is stationary on $TS^{\pm 1}$.
\end{proof}
\begin{lemma}[Uniqueness Lemma]\label{uniqueness}
	Let $S \times [-1,1]$ be an equivariant tomography, equipped with two invariant contact structures $\xi_0$ and $\xi_1$ with the following properties:
	\begin{enumerate}
	\item the characteristic foliations $S^{\pm 1}_{\xi_i}$ on the boundary of $S \times [-1,1]$ coincide for $i =0,1$;
	\item each surface $S^z$, $z \in [-1,1]$, is convex for either contact structure, and there is a smoothly varying family of multi-curves $\Gamma_z$ dividing both $S^z_{\xi_0}$	and $S^z_{\xi_1}$.
	\end{enumerate}
	Then $\xi_0$ and $\xi_1$ equivariantly are isotopic rel boundary
\end{lemma}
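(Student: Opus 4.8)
The strategy is to reduce to the Reconstruction Lemma. If we can produce an equivariant isotopy, rel boundary, taking $\xi_1$ to a contact structure $\xi_1'$ whose characteristic foliation on every slice $S^z$ coincides with that of $\xi_0$, then the Reconstruction Lemma applied to $\xi_0$ and $\xi_1'$ completes the proof by composition of isotopies. The real content is therefore an equivariant, $z$-parametrized version of Giroux's flexibility theorem: on each slice both $S^z_{\xi_0}$ and $S^z_{\xi_1}$ are anti-invariant singular foliations divided by the common multi-curve $\Gamma_z$, and we must deform the latter into the former through such foliations, smoothly in $z$ and compatibly with $u$, with the deformation trivial near $z = \pm 1$ — which is possible precisely because $S^{\pm1}_{\xi_0} = S^{\pm1}_{\xi_1}$.

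First I would set up the flexibility step. Fix $z$: convexity of $S^z$ for $\xi_1$ gives an invariant contact vector field transverse to $S^z$, hence (as in \Cref{defconvex} and the proof of \Cref{divconvex}) coordinates $S^z \times (-\epsilon_z, \epsilon_z)$ in which the germ of $\xi_1$ is vertically invariant and anti-invariant, with dividing set $\Gamma_z$; the same holds for $\xi_0$, with the same $\Gamma_z$. The set of anti-invariant singular foliations on $S^z$ divided by $\Gamma_z$ is non-empty and path-connected through anti-invariant foliations — the equivariant form of the standard fact, obtained by $u$-equivariantly straightening each divided foliation to a normal form near $\Gamma_z$ and contracting within the two complementary regions of $\Gamma_z$ — so there is a smooth family $\mathcal F^z_\sigma$, $\sigma \in [0,1]$, of such foliations with $\mathcal F^z_0 = S^z_{\xi_1}$, $\mathcal F^z_1 = S^z_{\xi_0}$, depending smoothly on $z$, and constant in $\sigma$ for $z$ near $\pm 1$. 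Inside the vertically invariant model each $\mathcal F^z_\sigma$ is realized as the characteristic foliation on a graph $\{(x, f_{z,\sigma}(x))\}$, with $f_{z,\sigma}\colon S^z \to \R$ solving the divided-foliation inequality appearing in the proof of \Cref{divconvex}; anti-invariance of $\mathcal F^z_\sigma$ and $\Gamma_z$ lets us choose $f_{z,\sigma}$ invariant under $u|_{S^z}$, smooth in $(z,\sigma)$, and independent of $\sigma$ near $z = \pm 1$. Running this construction over all $z$ and patching with an invariant partition of unity — the $z$-parametrized avatar of the \Cref{divconvex} construction, where the contact condition \eqref{eq:contcond} is solved with control in $z$ — yields a smooth family $\zeta_\sigma$ of invariant contact structures on $S \times [-1,1]$ with $S^z_{\zeta_\sigma} = \mathcal F^z_\sigma$, with $\zeta_0 = \xi_1$, with $\zeta_1 =: \xi_1'$ satisfying $S^z_{\xi_1'} = S^z_{\xi_0}$, and with $\zeta_\sigma$ stationary near $S^{\pm 1}$.

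Then I would apply \Cref{EGray} to the family $\zeta_\sigma$ to obtain an equivariant isotopy carrying $\xi_1$ onto $\xi_1'$; since the associated path of contact forms is stationary on a neighborhood of $\partial(S \times [-1,1])$, in particular on $TS^{\pm1}$, the argument of \Cref{Girouxbound} makes this isotopy rel boundary. Now $\xi_0$ and $\xi_1'$ induce the same characteristic foliation on every slice, so the Reconstruction Lemma gives an equivariant isotopy rel boundary from $\xi_1'$ to $\xi_0$, and composing yields the desired isotopy from $\xi_1$ to $\xi_0$.

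The main obstacle is the parametrized equivariant flexibility of the second paragraph. One has to verify that (i) the space of anti-invariant foliations divided by a fixed invariant $\Gamma_z$ is connected through anti-invariant foliations, i.e.\ Giroux's straightening and contraction can be carried out $u$-equivariantly — here one uses that $u$ preserves $\Gamma_z$ and exchanges its two complementary regions, as well as the sign data of the singularities; (ii) the realizing graph functions can be taken invariant, smooth in the slice parameter, and genuinely constant near $z = \pm 1$, so that the output is an honest rel-boundary isotopy rather than one which is merely $C^\infty$-small on the boundary; and (iii) the slice-wise realizations glue to a bona fide invariant contact structure on the whole product, which is where \eqref{eq:contcond} must be solved uniformly in $z$. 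Once these are in place, the rest is a direct invocation of the equivariant Gray stability and Giroux neighborhood theorems already established.
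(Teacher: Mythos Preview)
Your approach is genuinely different from the paper's and, as you yourself flag, its central step remains a program rather than a proof. The paper bypasses parametrized flexibility entirely by writing down an explicit homotopy of anti-invariant contact forms. With $\alpha^i = \beta^i_z + w^i_z\,dz$, the common dividing set $\Gamma_z$ yields (after a rescaling that arranges $v^0_z = v^1_z =: v_z$) functions with $B^i := v_z\,d\beta^i_z + \beta^i_z \wedge dv_z > 0$. One then runs a three-piece path: interpolate $w^0_z$ to $\lambda v_z$ for large $\lambda$; linearly interpolate $\beta^0_z$ to $\beta^1_z$ while keeping the $dz$-part equal to $\lambda v_z\,dz$ (here $\alpha_s \wedge d\alpha_s = \lambda((1-s)B^0 + sB^1)\wedge dz + O_\lambda(1)$, positive for $\lambda$ large); then interpolate $\lambda v_z$ back to $w^1_z$. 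Every form in the path is anti-invariant and stationary on $TS^{\pm1}$, so equivariant Gray in the rel-boundary form of \Cref{Girouxbound} finishes. No foliation flexibility, no patching.

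Your construction of $\zeta_\sigma$ has a real gap. The vertically invariant model $S^z \times (-\epsilon_z,\epsilon_z)$ lies along a contact vector field transverse to $S^z$, which is not $\partial_z$ in general; realizing $\mathcal F^z_\sigma$ on graphs in these \emph{local} models does not assemble into a contact structure on the \emph{global} product $S\times[-1,1]$ with $S^z_{\zeta_\sigma}=\mathcal F^z_\sigma$. ``Patching with an invariant partition of unity'' does not repair this: the contact condition is not convex in $(\beta_z,w_z)$ jointly, and \Cref{divconvex} builds a vertically invariant form near a \emph{single} surface, not one on $S\times[-1,1]$ with prescribed $z$-varying $\beta_z$ --- the $\dot\beta_z$ term in \eqref{eq:contcond} is exactly what must be controlled and nothing in your outline does so. Your obstacle (iii) is therefore more serious than a uniformity check. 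The paper's explicit interpolation avoids all of this by never leaving the space of global anti-invariant contact forms on $S\times[-1,1]$.
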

\begin{proof}
	Choose contact forms 
	\[
		\alpha^i = \beta^i_z + w^i_z\, dz, \quad i=0, 1,
	\]
	defining $\xi_i = \Ker \alpha^i$, with $\beta^0_{\pm 1} = \beta^1_{\pm 1}$. Recall the contact condition 
	\[
		A^i \coloneqq w^i_z\, d\beta^i_z + \beta^i_z \wedge (dw^i_z - \dot{\beta}^i_z) > 0, \quad i=0, 1,
	\]
	with the dot denoting the derivative with respect to $z$. The function $A^i$ is invariant.
	
	The convexity assumption allows us to choose, for each $z \in [-1, 1]$, functions $v^0_z$ and $v^1_z$ on $S$ that vanish exactly on $\Gamma_z$, only to first order, and that satisfy the contact condition
	\[
		B^i \coloneqq  v^i_z\, d\beta^i_z + \beta^i_z \wedge dv^i_z > 0, \quad i=0, 1.
	\]
	The function $B^i$ is invariant.
	
	The construction in the proof of Theorem 4.8.5 gives us two families of functions $v^i_z$ varying smoothly with $z$, and with $v^0_{\pm 1} = v^1_{\pm 1}$. Notice that for each $z$ the function $v^0_z/v^1_z$, defined a priori on $S \setminus \Gamma_z$ only, extends unambiguously to a positive function on all of $S$. The computation
	\[
	\begin{split} 
		v^0\, d\left(\frac{v^0}{v^1}\beta\right) + \left(\frac{v^0}{v^1}\beta\right) \wedge dv^0 &= \left(\frac{v^0}{v^1}\right)^2 v^1d\beta + \frac{v^0}{v^1} dv^0 \wedge \beta \\ 
		& \quad - \left(\frac{v^0}{v^1}\right)^2 dv^1 \wedge \beta + \frac{v^0}{v^1}\beta \wedge dv^0 \\ 
		&= \left(\frac{v^0}{v^1}\right)^2 (v^1 d\beta + \beta \wedge dv^1) 
	\end{split}
	\]
	shows that if we replace $\beta^1_z$ by $(v^0_z/v^1_z)\cdot \beta^1_z$ we may take $v^1_z= v^0_z \eqqcolon v_z$. Observe that after this replacement we still have $\beta^0_{\pm1} = \beta^1_{\pm1}$.
	
	With $\lambda \in \R^+$ set the anti-invariant form
	\[
		\alpha^i_t \coloneqq \beta^i_z + (1-t)w^i_z + t\,\lambda\, v_z\, dz, \quad i =0,1, \quad t\in [0,1]\,.
	\]
	Then
	\[
		\alpha^i_t \wedge d\alpha^i_t = ((1-t)A^i  - t\beta^i_z \wedge \dot{\beta}^i_z +t\lambda B^i) \wedge dz\,,
	\]
	which is positive for all $t \in [0,1]$ if $\lambda$ is chosen sufficiently large. Finally, set the family of anti-invariant forms $\alpha_s$ as
	\[
		\alpha_s \coloneqq (1-s)\alpha^0_1 +s \alpha^1_1 =(1-s)\beta^0_z +s\beta^1_z +\lambda\, v_z\, dz,\quad s \in  [0,1]\,.
	\]
	Then
	\[
		\alpha_s \wedge d\alpha_s = \lambda((1-s)B^0 +sB^1) \wedge dz+O_\lambda(1)\,,
	\]
	which is likewise positive for all $s \in [0,1]$, provided $\lambda$ is sufficiently large.
	Then
	\[
	\begin{cases}
	\alpha^0_{3t}, & 0 \le t \le 1/3, \\
	\alpha_{3t-1}, & 1/3 \le t \le 2/3,\\
	\alpha^1_{3-3t}, & 2/3 \le t \le 1,
	\end{cases}
	\]
	defines a homotopy via contact forms from $\alpha^0$ to $\alpha^1$ that is stationary on $TS^{\pm1}$. Now the proof concludes as in the reconstruction lemma.
\end{proof}
\begin{rmk}\label{rmkboundary}
	If in either of the two preceding Lemmas the contact structure $\xi_0$, $\xi_1$ coincide near the boundary, the proofs give isotopies that are stationary near the boundary.
\end{rmk}
Moreover, from the proof of the Reconstruction and Uniqueness Lemmas we can get the following result.
\begin{lemma}\label{parametricdividedetermined}
	Let $\xi$ be an invariant tight contact structure on an equivariant tomography $S^2 \times [-1,1]$, such that $S^z$ are $\xi$-convex. Then a smooth family of dividing circles $\Gamma_z$, coinciding near the boundary, is determined up to level-preserving isotopy $S^2 \times [-1,1]$ stationary near the boundary.
\end{lemma}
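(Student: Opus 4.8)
The plan is to prove that any two smooth families $\Gamma^0 = \{\Gamma^0_z\}_z$ and $\Gamma^1 = \{\Gamma^1_z\}_z$ of dividing circles for $\xi$ that agree for $z$ near $\pm 1$ are carried one onto the other by an equivariant, level-preserving isotopy of $S^2 \times [-1,1]$ that is stationary near the boundary. Note first that, since $\xi$ is tight and every slice is a $2$-sphere, \Cref{divsetS2iscircle} already guarantees that each dividing set arising here is a single embedded $u$-invariant circle, so on each slice $S^z$ the task is just to move one such circle onto another, coherently in $z$.

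I would start by fixing tomography coordinates in which $\xi = \Ker(\beta_z + w_z\,dz)$, so that $S^z_\xi$ is the singular foliation cut out by $\beta_z$ on $S^z$. Exactly as in the proof of the Uniqueness Lemma (\Cref{uniqueness}) --- which itself invokes the parametrized version of the construction behind \Cref{divconvex} --- the convexity of the slices lets one pick, smoothly in $z$ and anti-invariantly, functions $v^0_z$ and $v^1_z$ on $S^2$ vanishing to first order exactly along $\Gamma^0_z$, resp.\ $\Gamma^1_z$, and satisfying
\[
	B^i_z \;\coloneqq\; v^i_z\,d\beta_z + \beta_z \wedge dv^i_z \;>\; 0, \qquad i = 0,1 .
\]
Running this construction on both families with the same recipe, and using $\Gamma^0_z = \Gamma^1_z$ for $z$ near $\pm 1$, one also gets $v^0_z = v^1_z$ there. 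Now set $v^s_z \coloneqq (1-s)\,v^0_z + s\,v^1_z$ for $s \in [0,1]$. It is anti-invariant, and its associated $2$-form $v^s_z\,d\beta_z + \beta_z \wedge dv^s_z = (1-s)B^0_z + s\,B^1_z$ is a convex combination of positive area forms, hence again positive; in particular $dv^s_z$ is nonvanishing on $\{v^s_z = 0\}$. Therefore $\Gamma^s_z \coloneqq \{v^s_z = 0\}$ is, for every $(s,z)$, a closed $u$-invariant $1$-submanifold of $S^2$ transverse to $S^z_\xi$ and dividing it (immediate from \Cref{defconvex} together with $B^s_z > 0$, the form $\beta_z + v^s_z\,dz$ exhibiting it as a dividing set as in the proof of \Cref{divconvex}); since $0$ remains a regular value throughout, the family $(s,z) \mapsto \Gamma^s_z$ is a genuine smooth isotopy --- so each $\Gamma^s_z$ is again a single circle, and the family is constant in $s$ for $z$ near $\pm 1$.

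It then remains to promote $\{\Gamma^s_z\}$ to an ambient isotopy. I would fix a $u$-invariant metric on $S^2$ and take the normal $s$-velocity of $\Gamma^s_z$, namely the vector field
\[
	V^s_z \;=\; -\,\frac{(\partial_s v^s_z)\,\nabla v^s_z}{\Abs{\nabla v^s_z}^{2}}
\]
(defined near $\Gamma^s_z$, where $\nabla v^s_z \neq 0$). Two sign changes cancel, so $V^s_z$ is $u$-invariant --- not anti-invariant --- and it vanishes for $z$ near $\pm 1$ since there $v^s_z$ is independent of $s$. Using an invariant tubular neighbourhood of $\Gamma^s_z$ inside $S^z$ (Bredon, as in \Cref{tubsurf}) I would extend $V^s_z$, equivariantly and cut off away from $\Gamma^s_z$, to a compactly supported invariant vector field on $S^2$ depending smoothly on $(s,z)$ and still vanishing near $z = \pm 1$; integrating it in $s$, slicewise, produces an equivariant level-preserving isotopy $\Phi_s$ of $S^2 \times [-1,1]$ that is stationary near the boundary. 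One checks that $\tfrac{d}{ds}v^s_z(p(s)) \equiv 0$ along its flow lines, so $\Phi_1$ carries $\Gamma^0_z \times \{z\}$ onto $\Gamma^1_z \times \{z\}$ for every $z$, as required.

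The main obstacle is the step imported from the Uniqueness Lemma: choosing the auxiliary functions $v^0_z, v^1_z$ smoothly in $z$, anti-invariantly, with $B^i_z > 0$, and moreover coinciding near the boundary. In other words, it is the parametrized, boundary-relative form of the convexity construction behind \Cref{divconvex} and \Cref{uniqueness} that does the real work; granting that bookkeeping, the convex-combination observation and the equivariant isotopy-extension argument above are routine.
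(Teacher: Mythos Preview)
Your argument is correct and rests on the same key observation as the paper's proof: choose auxiliary functions $v^0_z$, $v^1_z$ as in the proof of \Cref{uniqueness} and interpolate linearly, using that the positivity condition $B^i_z>0$ is convex in $v^i_z$. The difference is only in how the isotopy is produced from this interpolation. The paper views $\alpha_t=\beta_z+\bigl((1-t)v^0_z+tv^1_z\bigr)\,dz$ as a path of contact forms and invokes the Gray-type argument of \Cref{Girouxbound} to obtain the level-preserving isotopy (the fact that $\psi_t^*\alpha_t$ is a positive multiple of $\alpha_0$ then forces $\psi_1(\Gamma^0_z)=\Gamma^1_z$). You instead track the zero sets $\Gamma^s_z=\{v^s_z=0\}$ directly and build the ambient isotopy by hand, integrating the normal-velocity field on each slice. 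Your route is slightly more elementary --- it avoids Gray stability entirely and makes the level-preservation, equivariance, and rel-boundary behaviour completely explicit --- while the paper's is terser but leaves the reader to check that the Gray vector field is indeed tangent to the slices.
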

\begin{proof}
	Let $\Gamma_z^0$ and $\Gamma_z^1$ be two families of dividing circles.
	
	Similarly to the proof of the Uniqueness Lemma, we can write $\alpha$ as
	\[
		\alpha = 
		\begin{cases}
			\beta_z + v^0_z\,dz & \text{using the circle $\Gamma^0_z$}\\
			\beta_z + v^1_z\,dz & \text{using the circle $\Gamma^1_z$}
		\end{cases}\,.
	\]
	Hence we can define the family
	\[
		\alpha_t \coloneqq \beta_z + ((1-t)v^0_z + tv^1_z)\,dz\,,
	\]
	and appealing to the \Cref{Girouxbound} to obtain a level-preserving isotopy $\psi_t$ pulling back $\alpha_t$ to a multiple of $\alpha_0$. Since the zeros of $v^0_z$ and of any its positive multiple are the same, we get that $\psi_1$ maps the dividing curve $\Gamma^0_z$ to $\Gamma^1_z$.	
\end{proof}
\subsection{Defining the twisting torsor}
The aim of this subsection is to introduce define the correct twisting invariant appearing in the statement of \Cref{thm:mainthmtomographyStwo}.

\begin{lemma}\label{twZtorsor}
	Let $F$ be a line in a 3-manifold $S \times [-1,1]$, intersecting each $S^z$ at a single point. Then the set of hyperplanes distribution $\xi_p \subseteq T_pM$ tangent to $F$, with fixed endpoints, up to isotopy rel boundary, is a $\Z$-torsor.
\end{lemma}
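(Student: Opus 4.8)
The plan is to reduce the statement to the elementary fact that, for a path-connected space $Y$ and two points $a,b\in Y$, the set of homotopy classes rel endpoints of paths from $a$ to $b$ is a torsor over $\pi_1(Y,a)$, applied to $Y=\mathbb{RP}^1\simeq S^1$.

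First I would trivialize the tangent bundle of $M=S\times[-1,1]$ along $F$ in a way adapted to $F$. Parametrizing $F$ by the projection to $[-1,1]$ and writing $p_z$ for its unique point on $S^z$, we have $F\cong[-1,1]$ and $L_z\coloneqq T_{p_z}F$ is a line subbundle of $TM|_F$ over the contractible interval. Picking any trivialization $TM|_F\cong[-1,1]\times\R^3$ turns $z\mapsto L_z$ into a path in $\mathbb{RP}^2$; it lifts to a path of unit vectors $v_z\in S^2$, and since $\mathrm{SO}(3)\to S^2$, $R\mapsto R^{-1}e_1$, is a fibration, this path lifts further to $R_z\in\mathrm{SO}(3)$ with $R_z v_z=e_1$. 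Composing the original trivialization with $R_z$ fibrewise yields a trivialization $\Phi\colon TM|_F\xrightarrow{\ \sim\ }[-1,1]\times\R^3$ carrying every $L_z$ to the constant line $\R e_1$.

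Via $\Phi$, a hyperplane distribution $\xi$ along $F$ tangent to $F$ (that is, with $T_{p_z}F\subseteq\xi_{p_z}$ for all $z$) is exactly a choice, for each $z$, of a $2$-plane in $\R^3$ containing $\R e_1$; such a plane is determined by the line it induces in $\R^3/\R e_1\cong\R^2$, i.e.\ by a point $\gamma_\xi(z)\in\mathbb{RP}^1$. Hence $\xi$ is the same datum as a path $\gamma_\xi\colon[-1,1]\to\mathbb{RP}^1$, the prescription of $\xi$ at the two endpoints of $F$ fixes $\gamma_\xi(-1)=a$ and $\gamma_\xi(1)=b$, and an isotopy rel boundary of such distributions corresponds precisely to a homotopy rel endpoints of the associated paths. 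Since $\mathbb{RP}^1\simeq S^1$ is path-connected the set of these homotopy classes is nonempty, and pre-concatenation of loops at $a$ defines a free and transitive action of $\pi_1(\mathbb{RP}^1,a)\cong\Z$ on it; this is the claimed $\Z$-torsor structure.

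The only delicate points are in the last two steps: one must arrange that $TF$ becomes literally constant under the trivialization, not merely a trivial subbundle (handled by the $\mathrm{SO}(3)$-lift above), and one must check that ``isotopy rel boundary'' for plane fields defined only along $F$ translates into exactly homotopy rel endpoints of $\mathbb{RP}^1$-valued paths, introducing no further identifications; in particular one should not permit an ambient isotopy to spin the slices at $F$, which would collapse the torsor to a point. Whether the planes are recorded unoriented (giving $\mathbb{RP}^1$) or cooriented (giving the great circle $S^1\subset S^2$ of unit conormals orthogonal to $TF$) is immaterial, as both have fundamental group $\Z$.
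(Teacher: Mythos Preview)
Your argument is correct and is essentially the same approach as the paper's, only much more detailed: the paper simply observes that hyperplane distributions tangent to $F$ correspond to (a choice of line in, or equivalently a trivialization of) the normal bundle $NF\subseteq TM$, and then declares it ``a standard fact in differential geometry'' that such objects with fixed endpoints, up to isotopy rel boundary, form a $\Z$-torsor. Your reduction to paths in $\mathbb{RP}^1$ and the $\pi_1(\mathbb{RP}^1)\cong\Z$-torsor structure is exactly that standard fact spelled out, and your caveat about not allowing ambient spinning at $F$ is well taken.
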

\begin{proof}
	The above equivalence classes are in bijection with the set of trivializations of normal bundle $NF \subseteq TM$ up to isotopy of $S \times [-1,1]$ rel boundary. It is then a standard fact in differential geometry that such object is a $\Z$-torsor.
\end{proof}

Let's now consider an arbitrary invariant tight contact structure $\xi$ on an equivariant tomography $S^2 \times [-1,1]$, and let's pick an arbitrary line $F = \{s\} \times [-1,1]$ of fixed points. For every point $p$ on $F$, the plane $\xi_p$ is preserved by $u$. Since $d_pu$ is diagonalizable as $[1] \oplus [-1] \oplus [-1]$, so must be $u|_{\xi_p}$. Finally, since $u$ reverses the orientation of the contact planes, $u|_{\xi_p}$ must necessarily be writable as $[1]\oplus [-1]$, and hence $F$ must be tangent to $\xi_p$.
\begin{defin}\label{twdif}
	Let $\xi_1$ and $\xi_2$ be two invariant tight contact structures on $S^2 \times [-1,1]$ coinciding at the boundary. Then, having chosen a line of fixed points $F$ , the \emph{$F$-twisting difference} is the $\Z$-difference of \Cref{twZtorsor} computed using $F$.
\end{defin}
The previous definition seems dependent on the line of fixed point we choose. A way to resolve that is to notice that in the case of a film of convex surfaces the quantity becomes independent. This is what we shall do.

Firstly, let's define the \emph{equivariant unparameterized embedded loop space} as
\[
	\Loop^{eq}(S^2) \coloneqq \Emb^{eq}(S^1,S^2)/\Diff^{eq,+}(S^1)\,,
\]
with the latter diffeomorphism group acting on the right. In the next \namecref{EBaer} will compute its fundamental group.
\begin{prop}\label{EBaer}
	The group $\Loop^{eq}(S^2)$ is weakly homotopically equivalent to $SO_2$, and its fundamental group is infinite cyclic.	
\end{prop}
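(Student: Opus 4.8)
The plan is to compute the weak homotopy type of $\Loop^{eq}(S^2)$ by analyzing the fibration
\[
	\Diff^{eq,+}(S^1) \longrightarrow \Emb^{eq}(S^1,S^2) \longrightarrow \Loop^{eq}(S^2)\,,
\]
together with a second fibration identifying $\Emb^{eq}(S^1,S^2)$ up to homotopy. Here the relevant involution on $S^1$ is $\theta \mapsto -\theta$ (with two fixed points) and on $S^2$ it is $u|_S$, which by \Cref{LefschetzS2} has exactly two fixed points $s, s'$; an equivariant embedded circle must pass through both of them, since an equivariant circle whose image misses the fixed points would descend to an embedded circle in the quotient disk meeting the orbifold in no singular point, but the $\Z/2\Z$ must act on the circle, forcing two fixed points on it that can only map to $s, s'$. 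So every equivariant embedded loop is a circle through $s$ and $s'$ invariant under $u|_S$.

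First I would set up the parametrized picture. Cutting an equivariant circle at its two fixed points, an element of $\Emb^{eq}(S^1,S^2)$ is determined by one of the two embedded arcs from $s$ to $s'$ (the other is its $u$-image), i.e.\ by an element of $\Emb(\,[0,\pi], S^2; \,0\mapsto s,\ \pi \mapsto s'\,)$ subject to the condition that the arc meets its $u$-translate only at the endpoints. Shrinking a neighborhood of the endpoints, this space of arcs deformation retracts onto the space of arcs that leave $s$ and arrive at $s'$ along prescribed directions transverse to the fixed locus; using that $S^2 \setminus \{s,s'\}$ is an open annulus $S^1 \times \R$ and standard facts about embedding spaces of intervals into surfaces (Gramain / Palais, analogous to \Cref{Palais}), the space of such arcs is weakly contractible once a boundary framing is fixed, and the framing contributes a $\pi_0$-torsor that is killed after we remember the residual $SO_2$ of rotations. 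Concretely I expect $\Emb^{eq}(S^1,S^2)$ to be weakly homotopy equivalent to $SO_2$ acting by rotation of the annulus $S^2\setminus\{s,s'\}$ about the axis through $s,s'$, since an equivariant circle is, up to the contractible choice of arc, the same data as the ``longitude'' direction at $s$, which is an element of the unit tangent circle.

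Next I would feed this into the quotient fibration. The group $\Diff^{eq,+}(S^1)$ of orientation-preserving equivariant diffeomorphisms of $(S^1, \theta\mapsto-\theta)$ deformation retracts onto the rotations commuting with $\theta \mapsto -\theta$, which is just $\{\id, \theta\mapsto\theta+\pi\} \simeq \Z/2\Z$ up to homotopy — more precisely $\Diff^{eq,+}(S^1)$ is homotopy equivalent to $O_2 \cap (\text{commutant}) \simeq \Z/2\Z$, or possibly to $SO_2$; I would pin this down by a direct Moser-type argument. Then the long exact sequence of the fibration, with $\Emb^{eq}(S^1,S^2) \simeq SO_2$ and $\Diff^{eq,+}(S^1)$ a disjoint union of two contractible components (the $\theta\mapsto\theta+c$ with $c\in\{0,\pi\}$), gives that $\Loop^{eq}(S^2)$ is the quotient of an $SO_2$ by a free $\Z/2\Z$ that acts on $\pi_1$ by multiplication by... the degree-two covering $SO_2 \to SO_2$, hence $\Loop^{eq}(S^2) \simeq SO_2$ with $\pi_1 \cong \Z$, while all higher homotopy groups vanish because $SO_2 = S^1$ is a $K(\Z,1)$ and the $\Z/2\Z$ action on the total space is free with quotient again aspherical. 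The main obstacle I anticipate is the rigorous identification $\Emb^{eq}(S^1,S^2)\simeq SO_2$: one must show the space of equivariant arcs (with the self-disjointness constraint on $u$-translates) is weakly contractible, which is where the equivariant analogue of Palais/Gramain isotopy extension — in the spirit of \Cref{Palais} and \Cref{InvolutionsS2} — does the real work; everything after that is bookkeeping in the exact sequence.
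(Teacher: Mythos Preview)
Your approach is genuinely different from the paper's. The paper does not touch the defining fibration $\Diff^{eq,+}(S^1)\to\Emb^{eq}(S^1,S^2)\to\Loop^{eq}(S^2)$ at all; instead it passes to the quotient $S^2/u\cong S^2$ with branch points $a,b$, identifies $\Loop^{eq}(S^2)$ with the embedded path space $\Omega^{\emb}_{a,b}S^2$ in the quotient, and then computes the latter via the fibrations
\[
\Diff^+(D^2,\partial D^2)\to\Diff^+(S^2,a,b)\to\Omega^{\emb}_{a,b}S^2,\qquad
\Diff^+(S^2,a)\to\Diff^+(S^2)\to S^2,
\]
invoking Smale's theorems that $\Diff^+(D^2,\partial D^2)\simeq *$ and $\Diff^+(S^2)\simeq SO_3$. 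So the hard step --- contractibility of arc spaces --- is packaged into classical results rather than an ad hoc Gramain argument.

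There is one concrete error in your outline: $\Emb^{eq}(S^1,S^2)$ is \emph{not} connected. Any equivariant embedding must send the fixed point $0\in S^1$ to a fixed point of $u|_{S^2}$, and the locally constant map $\gamma\mapsto\gamma(0)\in\{s,s'\}$ splits $\Emb^{eq}(S^1,S^2)$ into two components. Each component is indeed weakly equivalent to $SO_2$ (your tangent-direction heuristic is correct for a single component, and this also follows because each component is a connected double cover of the paper's $\Omega^{\emb}_{a,b}S^2\simeq SO_2$). Your identification of $\Diff^{eq,+}(S^1)$ is right --- it has exactly two contractible components, since an equivariant orientation-preserving diffeomorphism either fixes or swaps $\{0,\pi\}$, and the subgroup fixing both is $\Diff^+([0,\pi],\partial)\simeq *$ --- but the nontrivial element acts by \emph{swapping} the two components of $\Emb^{eq}$ (via $\gamma\mapsto\gamma(\cdot+\pi)$), not by a degree-two self-cover of a single $SO_2$. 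The quotient is therefore a single $SO_2$ for a simpler reason than you wrote, and the conclusion $\pi_1\cong\Z$ survives. What your approach buys is that it stays upstairs and makes the torsor structure on dividing curves visible directly; what the paper's buys is that everything reduces to Smale without needing a separate equivariant arc-space computation.
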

\begin{proof}
	By quotenting $S^2$ by $u$ we obtain a double cover $S^2 \to S^2$, branched over two points $a$,  $b$. Hence the space we are looking to can be viewed as the space $E \coloneqq \Omega_{a,b}^{\emb}S^2$, i.e.\ the space of embedded paths from $a$ to $b$. 
	
	Let's fix an element $e_0 \in E$. Firstly we observe that there exists a map
	\[
	\begin{split}
		\phi \colon G \coloneqq \Diff^+(S^2, a,b) &\to \Omega^{\emb}_{a,b}S^2\\
		\phi &\mapsto \phi\circ e_0
	\end{split}
	\]
	with the former being the space of orientation-preserving diffeomorphism fixing the points $a$ and $b$. The surjectivity of $\phi$ comes from two results: since $\Omega_{a,b}S^2$ is connected, by Baer's theorem \cite{BaerIsotope} so must be the space $\Omega_{a,b}^{\emb}S^2$; then the isotopy extension theorem allows us to extend isotopies to paths of diffeomorphisms. Moreover, with the same approach of Palais, whose statement is given in \Cref{Palais}, one can show $\phi$ to be a Serre fibration.
	
	By cutting along $e_0$, we see that the subgroup of $G$ fixing $e_0$ can be identified with the space $\Diff^+(D^2, \partial D^2)$ of orientation-preserving diffeomorphism of the disk preserving the boundary. By Smale \cite{Smale1959} this space is contractible, and hence $E$ is weakly homotopically equivalent to the total space.
	
	Let's consider the following Serre fibration, by \Cref{Palais}:
	\[
		ev_a \colon \Diff^+(S^2, a) \to S^2 \setminus \{a\}
	\]
	given by evaluation at $b$. Since the base is contractible, the fiber $G$ is weakly homotopically equivalent to $\Diff^+(S^2,a)$. 
	
	Finally, by valuating at $a$ we get the final Serre fibration we are interested in:
	\[
		\Diff^+(S^2,a) \to \Diff^+(S^2) \to S^2\,.
	\]
	It sits in the following commutative diagram of fibrations:
	\[
	\begin{tikzcd}
		\Diff^+(S^2,a) & \Diff^+(S^2) & S^2 \\
		SO_2 & SO_3 & S^2
		\arrow[from = 1-1,to=1-2, hookrightarrow]
		\arrow[from = 1-2,to=1-3]
		\arrow[from = 2-1,to=2-2, hookrightarrow]
		\arrow[from = 2-2,to=2-3]
		\arrow[from = 2-1,to=1-1, hookrightarrow]
		\arrow[from = 2-2,to=1-2, hookrightarrow]
		\arrow[from = 2-3,to=1-3, shift left=1pt, no head]
		\arrow[from = 2-3,to=1-3, shift right=1pt, no head]
	\end{tikzcd}
	\]
	The middle map is a homotopic equivalence by Serre \cite{Smale1959}. By comparing the long exact fibrations sequences, the fibers $G$, $SO_2$ must be weakly homotopic equivalent.
\end{proof}
As a consequence of \Cref{EBaer} the group $\Z$, viewed as $\pi_1(\Loop^{eq}(S^2))$, acts as a torsor on the space of varying families of circles $\Gamma_z$ in $S^2 \times [-1,1]$ \emph{with fixed endpoints $\Gamma_0$ and $\Gamma_1$}. Hence, it the view of \Cref{divsetS2iscircle} we can redefine the twisting difference using convex theory.
\begin{defin}
	Let $\xi_1$, $\xi_2$ be two invariant tight contact structures on $S^2 \times [-1,1]$, coinciding near the $z = 0,1$ levels, such that $S^z$ is convex for every $z$. Suppose, moreover, to having fixed two smooth families of dividing circles $\Gamma_z^1$, $\Gamma_z^2$ for the two structures. Then the \emph{convex twisting difference} $\xi_1 - \xi_2$ is the integral difference 
	\[
		[\Gamma^1] - [\Gamma^2] \in \pi_1(\Loop^{eq}(S^2)) \cong \Z\,.
	\]
\end{defin}
Thank to \Cref{parametricdividedetermined} the above definition is independent of the family of dividing circles we choose.

\begin{prop}\label{twdefequal}
	Let $S^2 \times [0,1]$ be an equivariant tomography, and consider two invariant tight contact structures $\xi_1$, $\xi_2$, coinciding at the $0$, $1$ levels. Suppose that there exist two isotopies rel boundary $\phi_t^{(1)}$, $\phi_t^{(2)}$ such that $\xi_i' \coloneqq T\phi_1^{(i)}(\xi_i)$ satisfies the following:
	\begin{enumerate}
		\item the surfaces $S^z$ are $\xi_i$-convex, and
		\item there exists smooth families of dividing circles for $S^z_{\xi_i'}$.
	\end{enumerate}
	Then the following hold:
	\begin{enumerate}
		\item the twisting difference of \Cref{twdif} is independent of the line of fixed point that we choose, and
		\item it coincides with the convex twisting difference.
	\end{enumerate}
\end{prop}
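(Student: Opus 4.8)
\section*{Proof plan}

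The plan is to reduce to the situation where all the slices $S^z$ are convex, and there to identify \emph{both} invariants with the winding of the dividing curves at a line of fixed points. First, note that the $F$-twisting difference of \Cref{twdif} is unchanged when we replace $\xi_i$ by $\xi_i'=T\phi^{(i)}_1(\xi_i)$. Indeed, since the equivariant isotopy $\phi^{(i)}_t$ starts at the identity it preserves each component of the fixed-point set, so $\phi^{(i)}_t(F)=F$ for all $t$; hence $\phi^{(i)}_t$ is an isotopy rel boundary through diffeomorphisms of the pair $(S^2\times[0,1],F)$ from $\id$ to $\phi^{(i)}_1$, and as such it induces on the $\Z$-torsor of \Cref{twZtorsor} (trivializations of $NF$ up to isotopy rel boundary) an isotopy from the identity to the map induced by $\phi^{(i)}_1$. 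Thus $\phi^{(i)}_1$ fixes the class of the framing determined by $\xi_i$, so the difference of these classes for $\xi_1,\xi_2$ equals the one for $\xi_1',\xi_2'$. Consequently we may assume from the start that every $S^z$ is convex for $\xi_i$ and carries a smooth family of dividing circles $\Gamma^i_z$, which by \Cref{parametricdividedetermined} and \Cref{rmkboundary} we take to coincide near $z=0,1$.

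Next I record two facts about a fixed slice $S^z$ and the point $s\in F\cap S^z$. The dividing circle passes through both $u$-fixed points of $S^z$: from $u^*X=-X$ and $u^*\Omega=\Omega$ one gets $\diver_\Omega(X)(u(p))=-\diver_\Omega(X)(p)$, so $u$ interchanges $S^z_+$ and $S^z_-$, and a $u$-fixed point can lie in neither, hence lies on $\Gamma^i_z=\partial S^z_\pm$. Moreover $s$ is not a singularity of $S^z_{\xi_i}$: at $s$ the differential $d_su$ acts as $-\id$ on $T_sS^z$, preserving its orientation, while it reverses the orientation of $\xi_s$, so the two planes differ. Therefore the line $\ell^{(i)}_z:=\xi_i\cap T_sS^z$ is exactly the direction of the characteristic foliation of $S^z$ at $s$, and $T_s\Gamma^i_z$, being transverse to that foliation, is transverse to $\ell^{(i)}_z$.

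Now for the identification. Along $F$ the contact plane contains $TF$, so $\ell^{(i)}_z$ is a line field along $F$ and, by construction, the $F$-twisting difference is the class in $\pi_1(\mathbb{P}(T_sS^2))\cong\Z$ of the loop obtained by concatenating the path $z\mapsto\ell^{(1)}_z$ with the reverse of $z\mapsto\ell^{(2)}_z$ (the endpoints agree, as $\xi_1=\xi_2$ near the boundary). By the second fact each $T_s\Gamma^i_z$ is a section of the bundle over $[0,1]$ whose fibre at $z$ is the contractible set of lines transverse to $\ell^{(i)}_z$; any two such sections are homotopic, and running the homotopy for the two curves simultaneously (so that the moving endpoints cancel) yields
\[
	\bigl[z\mapsto T_s\Gamma^1_z\bigr]-\bigl[z\mapsto T_s\Gamma^2_z\bigr]=\bigl[z\mapsto\ell^{(1)}_z\bigr]-\bigl[z\mapsto\ell^{(2)}_z\bigr]=(\text{$F$-twisting difference})\in\Z\,.
\]
On the other hand, the assignment sending a smooth family of invariant circles through the two $u$-fixed points to the path of its tangent lines at $s$ induces, via the equivalence $\Loop^{eq}(S^2)\simeq\Omega^{\emb}_{a,b}S^2$ of \Cref{EBaer} (with $a,b$ the branch points of $S^2\to S^2/u$, i.e.\ the images of the two fixed points) and the local model $z\mapsto z^2$ of the branched cover at $s$, an isomorphism $\pi_1(\Loop^{eq}(S^2))\to\pi_1(\mathbb{P}(T_sS^2))=\Z$: a generator spins the downstairs arc once, so its tangent ray turns once, so the tangent \emph{line} upstairs turns once. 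Hence the integer on the left above is precisely $[\Gamma^1]-[\Gamma^2]\in\pi_1(\Loop^{eq}(S^2))\cong\Z$, the convex twisting difference; this proves part (2), and since the convex twisting difference never refers to $F$, part (1) follows.

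The main obstacle is this last step: matching the generator of $\pi_1(\Loop^{eq}(S^2))$ produced by \Cref{EBaer} with the winding of the tangent line of the dividing curves at the fixed point, keeping careful track of the double branched cover (so as to see that the naive factor of $2$ disappears once one passes to \emph{unoriented} tangent lines) and of all the ``rel endpoints'' bookkeeping in the winding-number comparisons.
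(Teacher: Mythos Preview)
Your proof is correct and follows essentially the same route as the paper: reduce to the all-convex situation using that an equivariant isotopy rel boundary preserves each component $F$ of the fixed-point set, and then identify both integers with the winding at $s\in F$ by exploiting that the dividing circle is transverse to the characteristic foliation (hence to $\xi_i\cap T_sS^z$). You are in fact more careful than the paper on two points it leaves implicit --- that the dividing circle actually passes through the $u$-fixed points of each $S^z$, and that the map $\pi_1(\Loop^{eq}(S^2))\to\Z$ induced by the tangent direction at $s$ really takes a generator to a generator (your branched-cover computation with the $z\mapsto z^2$ model is exactly what justifies the paper's unproved claim that $\phi$ is a bijection).
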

\begin{proof}
	Given a line $F$ of fixed point, let's denote with $d_F(\xi_1, \xi_2)$ the $F$-twisting difference. If $\phi_t$ is an equivariant isotopy rel boundary it must send $F$ to a (possibly equal) line of fixed points. But since the set of such lines is discrete, $F$ must actually be kept fixed. Hence $d_F(\xi_1, \xi_2)$ coincides with $d_F(T\phi_t(\xi_1),\xi_2)$, but also with $d_F(\xi_1, T\phi_t(\xi_2))$. Moreover, thanks to \Cref{parametricdividedetermined} also the convex twisting difference is invariant up to equivariant isotopy rel boundary. Thus, we can suppose that $\xi_i' = \xi_i$.
	
	Every element $\gamma$ of $\Loop^{eq}(S^2)$ determines a tangent vector $\gamma'(a)$ in $T_aS^2$, transversal to $TF$. Thus, a family of such loops gives rise a trivialization of the normal bundle $TF$. On the other hand, if two such families are homotopic, then such identification gives an isotopy of trivializations.
	
	Since the isotopy classes rel boundary of trivializations of $NF$ are isomorphic to $\Z$, there is a \emph{a priori} surjective map 
	\[
		\phi \colon \pi_1(\Loop^{eq}(S^2)) \to \Z\,,
	\]
	that is \emph{a fortiori} a bijection due to \Cref{EBaer}.
	
	We just need to prove that the map $\phi$ is coherent with the torsion action given in \Cref{twdif}, i.e.\ that 
	\[
		\phi([\Gamma^1]- [\Gamma^2]) = d_F(\xi_1, \xi_2)\,.
	\] 
	
	For proving it, let's consider the tangent space $T_pS^z$ as it varies trough $p \in F$. At each point the dividing curve $\Gamma^i$, tangent to $T_pS^z$, is transversal to the characteristic foliation $S^z_{\xi_i}$, and hence to the plane $\xi_i(p)$. Thus the difference $d_F(\xi_1,\xi_2)$, computed looking at the rotation of $\xi_2$ with respect of $\xi_1$, must coincide with the $\phi([\Gamma^1]- [\Gamma^2])$, defined using the tangent vectors of the dividing curves.
\end{proof}

We have hence shown that with the correct assumptions our various definitions are coherent. Instead of proving already that we are in the scenario of \Cref{twdefequal}, we will keep that for later, since it will be already part of the proof of \Cref{thm:mainthmtomographyStwo}.
\subsection{Proof of Theorem \getrefnumber{thm:mainthmtomographyStwo}}

Let's start from an invariant tight contact structure $\xi$ on an equivariant tomography $S \times [-1, 1]$, $S = S^2$.

Supposing $S^{z}$ to be Morse--Smale, and hence convex, in a neighborhoods of $z=0,1$, we want to show how to manipulate the characteristic foliations $S_\xi^z$ by an isotopy rel boundary of $\xi$ until we arrive at a contact structure for which each $S^z$ is a convex surface.

We have seen in \Cref{CinftysurfMorseSmale} that, given an adequate surface $u$-oriented in a contact 3-manifold, by perturbing the contact structure we can arrange the surface to have a characteristic foliation of Morse--Smale type, so that the surface is convex by \Cref{MorseSmaleconvex}. Now, however, we are dealing with a 1-parametric family of surfaces, and the Morse–Smale condition is not generic for 1-parametric families of singular foliations. Nonetheless, there are some simple ``genericity'' assumptions that we may impose. The key to this are the fact that the contact condition is open in the $C^1$-topology, and equivariant Gray stability for invariant contact structures. Therefore, any property that we may impose on a film of singular foliations by a $C^1$-small perturbation can be achieved by a corresponding isotopy of contact structures. 

Let's start with a simple lemma, that allows us to use some extra information given by the contact structure.
\begin{lemma}
	There can be no accumulation point, in terms of the parameter $z$, of retrograde hyperbolic-hyperbolic connections, i.e.\ connections from a negative to a positive hyperbolic point.
\end{lemma}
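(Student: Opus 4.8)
The plan is to argue by contradiction, assuming that there is a sequence of parameter values $z_n \to z_\infty$ such that for each $n$ the characteristic foliation $S^{z_n}_\xi$ has a retrograde connection $\tau_n$ running from a negative hyperbolic point $h^-_n$ to a positive hyperbolic point $h^+_n$. First I would extract convergent subsequences: since the hyperbolic singularities of $S^z_\xi$ vary smoothly with $z$ (they are non-degenerate zeros of a smoothly varying vector field, so persist and move smoothly by the implicit function theorem, at least away from bifurcation values), the points $h^\pm_n$ converge to hyperbolic singularities $h^\pm_\infty$ of $S^{z_\infty}_\xi$, of the respective signs. The connecting orbits $\tau_n$, being trajectories of a foliation transverse to $\Gamma_{z_n}$ and contained in a compact sphere, should $C^1$-subconverge (using that separatrices of hyperbolic points depend continuously on $z$, via the stable/unstable manifold theorem) to a connecting trajectory, or possibly a broken one, from $h^-_\infty$ to $h^+_\infty$ in $S^{z_\infty}_\xi$.

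The contradiction should come from the contact/convexity constraint rather than from pure dynamics. The key extra information is that at the limit level $z_\infty$ the surface $S^{z_\infty}$ is convex (or can be arranged to be, after the genericity reductions already set up before this lemma), so $S^{z_\infty}_\xi$ is divided by some multicurve $\Gamma_{z_\infty}$; a retrograde hyperbolic-hyperbolic connection forces the unstable separatrix of the negative point and the stable separatrix of the positive point to coincide, and such a trajectory must cross $\Gamma_{z_\infty}$ transversally — but the negative hyperbolic point lies in $S_-$, with both its separatrices pointing into $S_-$ along the relevant directions, while the positive one lies in $S_+$, which is incompatible with a single trajectory joining them inside the divided picture. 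More precisely, I would show a retrograde connection at $z_\infty$ cannot exist for a divided foliation, and then upgrade this: if no retrograde connection exists at $z_\infty$, then by openness (the stable and unstable separatrices of $h^\pm_\infty$ miss each other, hence the separatrices of $h^\pm_z$ miss each other for $z$ near $z_\infty$ by continuous dependence) there is a whole neighborhood of $z_\infty$ free of retrograde connections, contradicting $z_n \to z_\infty$.

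A subtlety I would need to handle is the possibility that the limit configuration is a \emph{broken} connection passing through an intermediate hyperbolic point, or that $z_\infty$ is itself a bifurcation value where a hyperbolic point is born or dies or changes type; in the latter case I would invoke the reductions already made (finitely many ``bad'' parameter values, standard local models at bifurcations) to ensure $z_\infty$ is either a good level or one of finitely many controlled bifurcations, at which a local normal-form analysis again precludes an accumulating family. The equivariance adds essentially nothing new here: $u$ exchanges $S^z_\xi$ with itself level-wise, preserving $z$ and sending positive hyperbolic points to negative ones, so a retrograde connection and its $u$-image are both retrograde connections at the same level, and the argument runs symmetrically.

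The main obstacle I expect is making the $C^1$-convergence of the connecting trajectories $\tau_n$ rigorous and ruling out the broken-connection degeneration — i.e.\ ensuring the limit really is an honest retrograde connection (or a controlled bifurcation picture) rather than something that escapes the dichotomy. This is where the hypothesis that we have already perturbed to a situation with only finitely many bifurcation values, and with convexity available at generic and controlled levels, does the real work, and I would lean on the stable manifold theorem for the continuous dependence of separatrices together with compactness of $S^2$.
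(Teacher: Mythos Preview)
Your argument has a genuine circularity. You want to assume that $S^{z_\infty}$ is convex (divided), and then conclude that no retrograde connection can exist there; but the presence of a retrograde connection is \emph{exactly} the obstruction to being divided. Indeed your own step (a trajectory from $S_-$ to $S_+$ would cross $\Gamma$ against the flow direction) shows ``divided $\Rightarrow$ no retrograde connection''. So you cannot simply ``arrange'' convexity at $z_\infty$: in the paper this lemma is proved \emph{before} any genericity reduction or convexity is available, precisely because it is what later allows one to conclude that retrograde levels are finite and hence that convexity can be achieved on all remaining slices. Invoking the reductions ``already set up before this lemma'' gets the logical order backwards.

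The paper's proof uses none of the convexity/dynamics machinery you describe; it is a short direct computation with the contact condition~\eqref{eq:contcond}. After choosing coordinates so that each retrograde connection at level $z_\nu$ lies along the $x$-axis between $(\pm1,0,z_\nu)$, one has $\beta_{z_\nu}$ a positive multiple of $dy$ along that segment for every $\nu$. Taking the limit $z_\nu\to 0$ forces $\dot\beta_0$ to be a multiple of $dy$ along the limiting separatrix $\gamma_0$ as well, so $\beta_0\wedge\dot\beta_0=0$ there. Since the connection is retrograde, $w_0$ goes from negative (at the negative hyperbolic point) to positive along $\gamma_0$, so at some interior point $w_0=0$ and $\partial_x w_0\ge 0$; plugging into \eqref{eq:contcond} at that point yields $w_0\,d\beta_0+\beta_0\wedge(dw_0-\dot\beta_0)\le 0$, contradicting the contact condition. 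The idea you are missing is that \emph{accumulation} (not mere existence) of retrograde connections pins down $\dot\beta$ at the limit level via difference quotients, and this is what kills the $\beta\wedge\dot\beta$ term.
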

\begin{proof}
	 Since the statement already holds without any group action, we can ignore the latter for this proof.
	 
	 Arguing by contradiction, assume that we had such an accumulation point at $z = 0$. 
	 
	 Let $z_\nu$, $\nu \in \mathbb{N}$, be a sequence of points $z_\nu$ with $\lim_{\nu \to \infty} z_\nu = 0$ such that each $S_\xi$ contains a retrograde connection, accumulating at such a connection for $\nu \to \infty$. We may choose local coordinates in such a way that we are dealing with a contact structure on $\R^2 \times [-1, 1]$, with a positive (respectively, negative) hyperbolic point at $(\pm1, 0, z_\nu)$, including $z_\infty = 0$, and with the line segment between these to points (for each $\nu$) along the $x$-axis being a retrograde connection. 
	 
	 On each of these slices $S^{z_\nu}$, the vector field defining the characteristic foliation is a positive multiple of $\partial_x$ along the connecting separatrix, and hence $\beta_{z_\nu}$ is a positive multiple of $dy$ (if we take $\Omega = dx \wedge dy$ as area form). From the limiting process it follows that $\dot{\beta}_z|_{z =0}$ is a multiple of $dy$ along the separatrix $\gamma_0$ from $(-1, 0, 0)$ to $(1, 0, 0)$, and thus $(\beta_z \wedge \dot{\beta}_z)|_{z =0}$ = 0 there. 
	 
	 Moreover, there has to be a point on $\gamma_0$ where $w_0 = 0$ and $dw(\partial_x) \ge 0$, since $\gamma_0$ is going from a negative point, where $w_0 < 0$, to a positive one, where $w_0 > 0$. But the contact condition \eqref{eq:contcond} would be negated, since
	 \[
	 	w_0\,d\beta_0 + \beta_0 \wedge (dw_0 - \dot{\beta}_0)(\partial_y, \partial_x) = (\beta_0 \wedge dw_0)(\partial_x, \partial_y)\le 0\,,
	 \]
	 while $\Omega(\partial_x, \partial_y) > 0$.
\end{proof}
\begin{prop}\label{genfilm}
Up to a further small equivariant perturbation of the tomography (equiv.\ a perturbation of the contact structure), we can suppose the following:
\begin{enumerate}
	\item each $S_\xi^z$ contains only finitely many singular points, and --- on finitely many $z$–levels --- at most two degenerate one;
	\item there are no trajectories from a negative to a positive singularity in $S_\xi^z$, except on a finite number of levels $z_1,\dots, z_k$ , where all critical points are non-degenerate and there are exactly two retrograde hyperbolic-hyperbolic connections;
	\item on such levels there are no other connections between hyperbolic points.
\end{enumerate}
\end{prop}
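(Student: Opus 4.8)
The plan is to reduce the statement to a genericity assertion about one-parameter families of anti-invariant characteristic foliations, and then establish that assertion by equivariant transversality (\Cref{Gtrasv}) together with the no-accumulation lemma just proved. As noted just before the statement, the contact condition is $C^1$-open and equivariant Gray stability (\Cref{EGray}, in the relative form of \Cref{Girouxbound}) turns any $C^\infty$-small equivariant perturbation of the film $z \mapsto S^z_\xi$, supported away from $z = \pm 1$, into an equivariant isotopy rel boundary of $\xi$; so it suffices to perturb the path $z \mapsto X_z \in \XX(S^2)$ of anti-invariant vector fields, each satisfying the divergence condition of \Cref{charinducescontact}, so as to arrange (1)--(3). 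Two structural facts are used throughout: by \Cref{LefschetzS2} the fixed locus of $u$ meets each slice $S^z$ in exactly two points, tracing two disjoint arcs; and at such a fixed point $q$ the foliation $S^z_\xi$ is non-singular, since $d_qu$ reverses the orientation of $\xi_q$ but preserves that of $T_qS^z$, so $\xi_q \neq T_qS^z$. Hence every singular point --- and, by Poincaré--Bendixson applied to the characteristic foliation of a tight structure on $S^2$, which has no closed leaves, every connecting trajectory between singularities --- lies in the open set where the $\Z/2\Z$-action is free.

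For (1) I would run the argument of \Cref{nondegequiv} and \Cref{CinftyMorseSmale} one parameter at a time. View $(p,z) \mapsto$ (the $1$-jet of $X_z$ at $p$) as an equivariant section of a bundle over $S^2 \times [-1,1]$ and apply \Cref{Gtrasv}, relative to a neighbourhood of the two fixed arcs where transversality is automatic because $X_z$ does not vanish there, to make this section transverse to the stratum of jets with vanishing value and a vanishing eigenvalue of the linearisation. That stratum has codimension three, so its preimage is a finite set of points $(p,z)$; the finitely many corresponding levels are the only ones carrying a degenerate singularity, generically a single saddle-node, and $u$ carries it to a second one at the same level (of opposite sign, since $u$ reverses $\diver_\Omega X$), whence ``at most two''. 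No fixed-point singularity can be degenerate, because $u$ has no fixed singular point at all.

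For (2) and (3) I would first invoke the no-accumulation lemma just proved: retrograde hyperbolic-hyperbolic connections do not accumulate in $z$. Combined with the $C^1$-openness of the ``no saddle connection'' condition and with transversality to $0$, in the one-parameter family, of the separatrix-splitting function of each pair of saddles, this makes the set of levels carrying any hyperbolic-hyperbolic connection finite. Progressive (positive to negative) saddle connections are removable: at such a level an equivariant perturbation supported near the connecting separatrices pushes them apart, and it may be applied to the whole $u$-orbit of such connections at once, so finitely many steps eliminate all of them; a connecting separatrix passing through a fixed point is itself a codimension-one, hence avoidable, event, so for a generic path the separatrices stay off the fixed arcs. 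Finally, at a level still carrying a retrograde connection, its $u$-orbit consists of retrograde connections, since $u$ reverses the flow and exchanges the two signs, so their number is even; a second $u$-orbit of retrograde connections, or a retrograde connection sharing a level with a degenerate singularity or with any other hyperbolic-hyperbolic connection, is a codimension $\ge 2$ coincidence and is avoided by a generic equivariant perturbation. This yields exactly two retrograde connections on each of finitely many levels, all singularities there non-degenerate, and no further hyperbolic-hyperbolic connections, which is (2)--(3).

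The main obstacle is twofold. First, the equivariant transversality bookkeeping near the two lines of fixed points must be done carefully: one needs that every perturbation appearing in the classical genericity argument for films of foliations can be chosen anti-invariant --- by symmetrising a perturbation on a fundamental domain, which is legitimate precisely because all the relevant features live where the action is free --- and that no degeneracy is created near, or forced along, the fixed arcs. Second, and more delicate, is pinning down the count ``exactly two'' rather than ``one'': the classical bifurcation produces a single retrograde connection, so one must rule out, or trade away, a single $u$-invariant retrograde connection running through one of the fixed points, which is itself a codimension-one event and hence not disposed of by a naive dimension count; making the interplay between the $\Z/2\Z$-symmetry and Giroux's genericity argument precise enough to force a pair, while keeping the whole perturbation equivariant and stationary near $z = \pm 1$, is where the real work lies.
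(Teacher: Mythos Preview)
Your approach is essentially the paper's: both reduce to a genericity statement for one-parameter families of anti-invariant vector fields on $S^2$, both invoke equivariant transversality (\Cref{Gtrasv}) for the singularity stratification, and both use the no-accumulation lemma together with Peixoto-type local moves to control the hyperbolic--hyperbolic connections. The paper outsources more of the bookkeeping to Sotomayor's generic one-parameter theory and then observes that all further perturbations are local around singular points, hence lie in the free locus and can be symmetrised; your jet-bundle formulation is just a more explicit version of the same step. Your observation that the divergence condition at singularities is $C^\infty$-open, and that singularities and separatrices live where the action is free, is exactly what the paper uses.

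On your ``main obstacle'': the worry about a single $u$-invariant retrograde connection through a fixed point $q$ is a genuine one, and the paper does not address it either --- it simply writes ``exactly two'' without ruling out the invariant case. Note, incidentally, that your earlier claim that a separatrix through a fixed point is ``avoidable'' because it is codimension one is wrong for the same reason you later give: codimension one in a one-parameter family means isolated, not absent. That said, this gap is harmless for the application. The only place the proposition is used is in \Cref{maketomographyallconvex}, and there the argument needs merely that at each bad level the retrograde connections form a single $u$-orbit with no other hyperbolic--hyperbolic connections present. If that orbit has size one (the $u$-invariant case, $x_h^-$ connected through $q$ to $x_h^+ = u(x_h^-)$), the other stable separatrix of $x_h^+$ still has elliptic $\alpha$-limit by clause~(3), and the equivariant elimination of the pair $(x_e^+, x_h^+)$ together with its $u$-image $(x_e^-, x_h^-)$ removes the connection. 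So the correct reading of clause~(2) is ``a single $u$-orbit of retrograde connections,'' and with that amendment both your argument and the paper's go through.
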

\begin{proof}
	The idea is to readapt the concepts present in \cite[Prop.\ II.2.1]{sotomayorgenericoneparameter}, with the important property of not having closed orbits, nor a poly-cycle as $\omega$- or $\alpha$-limit. By Poincaré-Bendixson Theorem, no other $\alpha$- or $\omega$-limits can appear outside singularities \cite[Thm.\ 4.7.8]{geigesIntroductionContactTopology2008}.
	
	Our conditions are open, we want to show that they are dense. Let's consider the 1-parameter vector field 
	\[
		X \colon S^2 \times [0,1] \to TS^2\,,
	\]
	defining the film of characteristic foliations. By assumption it is a $G$-map, where $G = \Z/2\Z$, with the action on the left-hand side being
	\[
		g\cdot(p,v) = (g\cdot p, - d_pg[v])\,.
	\]
	
	By \Cref{Gtrasv} we know that we can find a new map $X'$, $C^\infty$-close to $X$, such that $X$ is transversal to the zero-section $s_0$. With the same reasoning as in \Cref{nondegequiv}, if we substitute $X'$ with $Z \coloneqq X' \circ h^{-1}$, then $Z$ is a 1-parameter family of vector fields, close to $X$ and transversal to $s_0$.
	
	Next, observe that the zero-locus $S$ of $Z$ is transversal to $S \times \{t_0\}$ at some $(x_0, t_0)$, if and only if $x_0$ is a non-degenerate singular point for $Z(\cdot,t_0)$. The idea then is to work on this condition in order to find a suitable perturbation of $Z$ with the desired properties. 
	
	Let's firstly recall in which situation we are, that is unique for our situation.
	\begin{enumerate}
		\item Since $S$ is a sphere and $\xi$ is tight, $S^z_\xi$ cannot have closed orbits.
		\item Since $S$ is a sphere and $\xi$ is tight, $S^z_\xi$ cannot have poly-cycle as $\alpha$- and $\omega$-limits. In fact, such poly-cycles would be made of singularities of the same sign, and hence would be negated by tightness \cite[Prop.\ 6.4.33]{geigesIntroductionContactTopology2008}.
		\item Since $S^z_\xi$ is a vector field coming from a characteristic foliation, then the divergence at every singular point must be non-zero.
	\end{enumerate}
	
	With that in mind, we just need to observe that the required perturbation is contained in \cite[Prop.\ II.2.1]{sotomayorgenericoneparameter}. After applying the transversality reasoning, that we re-proposed with the correct readaptation, the rest of the perturbation are \emph{local around singular points}. Moreover, the quoted proof does more than we need. In fact, the perturbation Sotomayor calls $\eta^{(1)}$ could have \emph{a priori} non-degenerate non-generic singular points at some levels. Recall: these are traceless singularities transversal to the zero-section. However, the vector field we started with didn't have such singularities, and hence they cannot appear via small perturbations.
	
	Moreover, since $\eta^{(1)}$ is obtained by perturbations around singular points, we can readapt them by carrying them out equivariantly. So we have fulfilled condition $1)$ of our statement.
	
	Finally, since retrograde connections constitute a discrete sets in term of the $z$-parameter, we know that the levels in which they appear must be finite. Hence, we can make small perturbations, analogous to the one used by Peixoto, such that on those levels only two hyperbolic-hyperbolic connections appear, being the retrograde ones, and such that no degenerate singularity manifests.
\end{proof}

For the next lemma recall that an \emph{elimination pair} is a configuration of an hyperbolic and elliptic point, of same sign, connected by a separatrix of the first.
\begin{lemma}
	Let $S$ be an $u$-oriented surface having Morse--Smale characteristic foliation, embedded in an invariant tight contact structure, and let's consider an elimination pair inside $S$. Then we can equivariantly $C^\infty$-perturb $S$ such that the elimination pair disappears.
\end{lemma}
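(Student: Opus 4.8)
The plan is to reduce to the classical elimination (cancellation) lemma of Giroux \cite{geigesIntroductionContactTopology2008}, performing the cancellation of the given pair and of its mirror image under $u$ simultaneously, in two disjoint regions.

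First I would record the symmetry of the configuration. Since $u$ preserves the orientation of $S$ but reverses that of the contact planes, it reverses the sign of the singular points of $S_\xi$, and, being a diffeomorphism of $S$, it carries elimination pairs to elimination pairs. Write $(e,h)$ for the given pair, with $e$ elliptic and $h$ hyperbolic of the same sign, joined by a separatrix $\gamma$ of $h$. Then $(u(e),u(h))$ is again an elimination pair, with separatrix $u(\gamma)$, but of the opposite sign, so it is disjoint from $(e,h)$: if $\gamma$ and $u(\gamma)$ were the same leaf then $u$ would permute $\{e,h\}$, impossible by the sign, so they are distinct (hence disjoint) leaves with pairwise distinct endpoints. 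Neither arc meets $\mathrm{Fix}(u|_S)$: the latter is finite (two points when $S\cong S^2$, by \Cref{LefschetzS2}), and $S_\xi$ is nonsingular there, since at a fixed point $d_pu$ reverses the orientation of $\xi_p$ but not of $T_pS$, whence $\xi_p\neq T_pS$. Thus $\overline{\gamma}$ and $\overline{u(\gamma)}$ are disjoint compact arcs, each at positive distance from $\mathrm{Fix}(u|_S)$, and we may fix an open neighborhood $U\supseteq\overline{\gamma}$ with $\overline{U}\cap u(\overline{U})=\emptyset$ and $\overline{U}\cap\mathrm{Fix}(u|_S)=\emptyset$.

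Next I would apply the classical elimination lemma to $(e,h)$: it produces a small isotopy $S_t$, $t\in[0,1]$, through smooth surfaces, supported in $U$ and equal to $S$ outside $U$, such that the characteristic foliation of $S_1$ has no singular point in $U$, the rest of the foliation being unchanged. I then symmetrize: let $S'_t$ coincide with $S_t$ on $U$, with $u(S_t)$ on $u(U)$, and with $S$ elsewhere. As $U$ and $u(U)$ are disjoint and all three descriptions agree with $S$ near $\partial U\cup\partial u(U)$, this is a smooth isotopy, and $u(S'_t)=S'_t$ by construction, so it is an equivariant isotopy ending at a $u$-invariant surface $S'\coloneqq S'_1$, which is $C^0$-close to $S$; in particular the invariant tight contact structure $\xi$ is still defined near $S'$. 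Transporting the orientation along the isotopy and averaging an area form over $\Z/2\Z$ makes $S'$ a $u$-oriented surface, and its characteristic foliation --- automatically anti-invariant, as $u^*\xi=\xi$ --- agrees with $S_\xi$ outside $U\cup u(U)$, has no singular point in $U$ by the classical lemma, and has no singular point in $u(U)$ by pushing that statement forward with $u$ (again using $u^*\xi=\xi$). Hence the elimination pair $(e,h)$, along with its mirror $(u(e),u(h))$, has disappeared. If in addition one wants $S'_\xi$ to stay Morse--Smale, one uses that this condition is open and that the classical cancellation can be carried out without creating new connections between hyperbolic points; by equivariance the same holds on $u(U)$.

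The single point that genuinely requires care is the geometric input of the first step --- checking that the separatrix, its $u$-image, and a small neighborhood of each can be taken mutually disjoint and disjoint from $\mathrm{Fix}(u|_S)$ --- since this is precisely what makes the two local cancellations non-interfering; everything afterwards is a formal doubling of the non-equivariant argument.
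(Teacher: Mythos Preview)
Your proof is correct and follows essentially the same approach as the paper: observe that $u$ sends the elimination pair to one of opposite sign, hence to a triple disjoint from the original, so the classical local construction can be applied symmetrically on the pair and on its $u$-image. The paper's argument is much terser---it records only these two facts---while you have filled in the details about disjointness from $\mathrm{Fix}(u|_S)$ and the formal symmetrization of the isotopy.
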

\begin{proof}
	The action $u|_S$ must send the separatrix $(x_e, \gamma, x_h)$ necessarily into one of the same type \emph{and different sign}. So it must be another triple, away from the one we started with.
	
	Since the classical construction \cite[Lemma 4.6.26]{geigesIntroductionContactTopology2008} is local around $\gamma$, we can just apply it in an equivariant way, on $(x_e, \gamma, x_h)$ and $(u(x_e), u(\gamma), u(x_h))$.
\end{proof}
\begin{figure}[t]
	\centering
	\begin{tikzpicture}[x=0.75pt, y=0.75pt,>=Stealth]
		\draw (-50,0) -- (50,0);
		\draw (30:40) -- (210:40);
		\draw (-50,0) -- +(25,20);
		\draw (50,0) -- +(25,20);
		\draw (30:40) -- +(20,20);
		\draw (210:40) -- +(20,20);
		\draw[decoration={markings, mark = at position 0.5 with \arrow{>}},postaction={decorate}] (25,25) ..controls +(-150:0.2cm) and +(60:0.2cm).. (0,0);
		\draw[->] (-70,10) arc [x radius=10, y radius= 5, start angle = 170, end angle = 440];
		
		\draw[->] (-15,15) -- +(-135:17);
		\draw[->] (-15,15) -- +(0:15);

		\draw[->] (0,0) -- (-45:30);
		\node at (-45:30) [font = \small, shift = {(8,5)}] {$\alpha_p^\dagger$};
		\node at (80,0) [font = \small] {$T_pS$};
		\node at (-45,-25) [font = \small] {$\xi_p$};
		\node at (10,25) [font = \small] {$\vec{\gamma}$}; 
		
		\begin{scope}[shift = {(200,0)}]
			\draw (0,0) circle [radius=40];
			\filldraw (90:40) circle [radius=1.5pt];
			\filldraw (310:40) circle [radius=1.5pt];
			\node at (0,0) [font = \small,shift = {(0,-25)}] {$\theta$};
			\draw (0,5) -- (90:40);
			\draw (310:5) -- (310:40);
			\draw (0,0) circle [radius=5];
			\filldraw (0,0) circle [radius=1];
			\draw[->] (80:15) arc [radius=15, start angle = 80, end angle = 330];
			\node at (30:13) [font = \small] {$\vec{\gamma}$};
		\end{scope}
	\end{tikzpicture}
	\caption{The angle between the oriented planes $T_pS$ and $\xi_p$}
	\label{figangolo}
\end{figure}
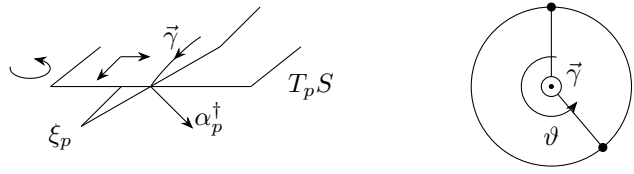
\begin{lemma}\label{tomographynoretrograde}
	Let $S$ be an $u$-oriented 2–sphere embedded in an invariant tight contact structure such that $S_\xi$ has finitely many degenerate singular points, and there are no retrograde hyperbolic–hyperbolic connections. Then the graph $G$ in $S$ whose vertices are the positive singular points, and whose edges are the stable separatrices of the positive hyperbolic points, is a connected tree\footnote{The same lemma holds using negative singularities and unstable separatrices.}. Moreover, $S$ is convex, and a dividing set is given by the boundary of a disc-like neighborhood of $G$.
\end{lemma}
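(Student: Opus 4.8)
The plan is to reproduce, in the equivariant setting, the structure theory of convex surfaces underlying \Cref{MorseSmaleconvex}, using that the involution $u$ interchanges the two signs. First I would record the dynamical facts available under the hypotheses. Since $S\cong S^2$ sits inside a tight structure, $S_\xi$ has no closed orbits (an innermost one would bound an overtwisted disc, as already used in \Cref{CinftysurfMorseSmale}); and by the Poincaré--Bendixson theorem together with the absence of poly-cycle limit sets (a poly-cycle consists of singularities of one sign, which tightness forbids) every $\alpha$- and $\omega$-limit set of a trajectory of $S_\xi$ is a single singular point. Combined with the hypothesis that there are no retrograde hyperbolic--hyperbolic connections, this yields the key claim: each of the two stable separatrices of a positive hyperbolic point has as $\alpha$-limit a positive singular point, since that limit can be neither a sink (a negative elliptic point) nor a negative hyperbolic point (that would be a retrograde connection). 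Hence $G$ is a genuine finite compact embedded graph in $S$ in which every positive hyperbolic point has valence two. Because $u$ reverses the flow (as $u^{*}X=-X$) and reverses the signs of singularities (it preserves the orientation of $S$ but reverses that of the contact planes, cf.\ the proof of \Cref{MorseSmaleconvex}), it carries $G$ onto the analogous graph $G'$ built from the negative singular points and the unstable separatrices of the negative hyperbolic points, and $G\cap G'=\emptyset$: a common point would lie on a separatrix joining a negative hyperbolic point to a positive one. If $S_\xi$ has degenerate singular points, a preliminary $u$-equivariant $C^{\infty}$-small perturbation supported near them — they are not fixed points of $u$, since $d_{p}u$ would otherwise reverse the orientation of $\xi_{p}=T_{p}S$ — makes $S_\xi$ nondegenerate without creating retrograde connections and alters neither the homeomorphism type of $G$ nor the isotopy class of the dividing set, so I may assume $S_\xi$ is nondegenerate.

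Next I would run the construction in the proof of \Cref{MorseSmaleconvex} with $G$ and $G'=u(G)$ playing the roles of $S^{0}_{+}$ and $S^{0}_{-}$. Take $S^{0}_{+}$ to be a closed neighbourhood of $G$ built from small discs around the positive elliptic points and ``hourglass'' bands around the stable separatrices of the positive hyperbolic points: in the saddle model $X=-x\,\partial_{x}+y\,\partial_{y}$ the region $\{\,y^{2}\le x^{2}+\epsilon\,\}$ is a neighbourhood of the stable separatrices along which $X$ is transverse to, and points out of, the boundary, since $y^{2}-x^{2}$ is strictly increasing along the flow; suitably truncated and merged with the discs around the elliptic endpoints, these pieces give an $S^{0}_{+}$ with $X$ transverse to $\partial S^{0}_{+}$ and pointing out of $S^{0}_{+}$. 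Set $S^{0}_{-}:=u(S^{0}_{+})$; after shrinking, $S^{0}_{+}\cap S^{0}_{-}=\emptyset$, and by anti-invariance $X$ points into $S^{0}_{-}$ along $\partial S^{0}_{-}$. On $A:=S\setminus(\operatorname{int}S^{0}_{+}\cup\operatorname{int}S^{0}_{-})$ the foliation $S_\xi$ is nonsingular (every singular point lies in $G\cup G'$), transverse to $\partial A$, and without closed leaves, so $A$ is a finite disjoint union of annuli; by \Cref{nerve} the $u$-invariant components carry a nerve and the action can be put in the form $(\theta,s)\mapsto(-\theta,-s)$, exactly as in \Cref{MorseSmaleconvex}. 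The area-form patching argument of \Cref{MorseSmaleconvex}, carried out equivariantly (even functions in $s$, the negative-side data being the $u$-image of the positive-side data), then produces an invariant area form and vector field for which a nerve of $A$ divides $S_\xi$, so $S$ is convex by \Cref{divconvex}.

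Finally, since $\xi$ is tight and $S\cong S^{2}$, \Cref{divsetS2iscircle} forces the dividing set of the convex surface $S$ to be a single circle. As the dividing set is, up to level-preserving isotopy, the nerve of $A$, this means that $A$ has exactly one annular component; hence $S\setminus\operatorname{int}A=S^{0}_{+}\sqcup S^{0}_{-}$ is a union of two discs, so $S^{0}_{+}$ is a disc and the graph $G$ onto which it deformation retracts is a connected tree. Moreover the circle $\partial S^{0}_{+}=\partial N(G)$, being isotopic to the nerve of $A$, is then itself a dividing set, which is the last assertion.

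I expect the main obstacle to be the first step: rigorously establishing that $G$ is ``closed up'' correctly — i.e.\ that under the bare hypotheses (finitely many degenerate singularities, no retrograde connections, no a priori Morse--Smale assumption) the limit-set analysis goes through and $G$ is a finite embedded graph with the asserted valences — and checking that the preliminary perturbation can indeed be taken without introducing retrograde connections. The equivariance is mild throughout: $u$ merely swaps the two signs, and one only ever chooses the negative-side data to be the $u$-image of the positive-side data, precisely as in \Cref{MorseSmaleconvex}.
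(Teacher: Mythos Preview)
Your proposal is correct and follows essentially the same route as the paper: run the construction of \Cref{MorseSmaleconvex} with the positive graph $G$ and its $u$-image $G'$ in place of $S^0_\pm$, deduce convexity with dividing set a nerve of the intermediate annulus, and then invoke \Cref{divsetS2iscircle} to force connectedness of the dividing set, hence that $G$ is a connected tree. The paper's proof is considerably terser---it simply asserts that ``the proof of \Cref{MorseSmaleconvex} still applies'' since the degenerate singularities are isolated---whereas you have unpacked the dynamical analysis (limit sets via Poincar\'e--Bendixson, why stable separatrices of positive hyperbolics terminate at positive singularities) that makes this assertion legitimate. One small point: your preliminary perturbation to eliminate degenerate singularities is unnecessary and slightly awkward, since the lemma concerns the graph $G$ of the \emph{given} foliation; the paper avoids this by working directly with the isolated degenerate points (they still carry a sign, so they go into the appropriate $S^0_\pm$, and the complement $A$ remains nonsingular), and you could do the same.
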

\begin{proof}
	Since there are no retrograde saddle-saddle connections, and since the degenerate singular points are isolated, then the proof of \Cref{MorseSmaleconvex} still applies, and we know that $S$ is convex with dividing set given by the boundary of a thickened $G$. In particular, the dividing set is given by a nerve of the annulus between the negative and positive part.
	
	Due to \Cref{divsetS2iscircle}, the graph $G$ is forced to be a connected tree.
\end{proof}
\begin{lemma}\label{nocircleretrograde}
	Let $K$ a topologically trivial Legendrian knot in a tight contact manifold $(M, \xi)$. Let $\Sigma$ a Seifert disk. Then the boundary $\partial \Sigma$ cannot be made of a series of retrograde hyperbolic-hyperbolic connections.
\end{lemma}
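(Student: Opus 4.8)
The plan is to argue by contradiction. Assume $\partial\Sigma$ is a cyclic concatenation of retrograde saddle connections; I will extract from this a topologically trivial Legendrian knot of non-negative Thurston--Bennequin invariant, contradicting tightness via the Bennequin inequality \cite{geigesIntroductionContactTopology2008}. First I would pin down the combinatorics of the boundary. As $\partial\Sigma$ is a smooth circle, at each of its singular points the curve follows two opposite separatrices, so all singular points of $\Sigma_\xi$ on $\partial\Sigma$ are hyperbolic and carry exactly one further separatrix into the interior of $\Sigma$. Retrograde means each boundary separatrix flows from a negative to a positive hyperbolic point, so going once around $\partial\Sigma$ the signs must alternate: there are $n\ge 1$ positive boundary hyperbolic points $p_1,\dots,p_n$ and $n$ negative ones $q_1,\dots,q_n$, with both boundary separatrices incoming and the interior one outgoing at each $p_i$, and the mirror picture at each $q_j$.

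Next I would make the interior generic. Perturbing $\Sigma$ rel a collar of $\partial\Sigma$ leaves the contact germ along $\partial\Sigma$, hence the boundary picture above, untouched, and by the perturbation techniques of \Cref{sec:EConvex} one may assume $\Sigma_\xi$ is Morse--Smale in the interior: finitely many generic singularities, no interior saddle connections, and --- since $\Sigma$ is a disc in a tight manifold --- no closed leaves nor polycyclic limit sets (Poincaré--Bendixson and tightness, as in \Cref{genfilm}). Doubling $\Sigma$ across its boundary polycycle to a $2$--sphere turns each boundary half--saddle into an honest saddle, so Poincaré--Hopf gives $e - h = 1 + n$, where $e$ and $h$ count the interior elliptic, resp. hyperbolic, singular points of $\Sigma_\xi$.

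Finally I would compute $\tb(\partial\Sigma)$ with respect to $\Sigma$, as the relative Euler number of $\xi|_\Sigma$ trivialised along $\partial\Sigma$ by $T(\partial\Sigma)\subset\xi$, in the spirit of the proof of the Bennequin inequality: the characteristic vector field, regarded inside $\xi$, already is such a section off its zero set, and must only be resolved near the interior singularities (contributing their indices, totalling $-(e-h)$) and near the boundary half--saddles $p_i$, $q_j$, where $T\Sigma=\xi$ and the tangent line to $\partial\Sigma$ has to be grafted in. The hard part will be the local sign bookkeeping at these $2n$ half--saddles: the claim --- and what the retrograde hypothesis buys us --- is that each of them contributes to the relative Euler number with the sign that reinforces, rather than cancels against, the interior count, so that altogether $\tb(\partial\Sigma)\ge 0$. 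As $\partial\Sigma$ is topologically trivial, this contradicts $\tb\le -1$ for Legendrian unknots in a tight contact manifold, proving the lemma. (Equivalently, once $\tb(\partial\Sigma)=0$ is reached, $\Sigma$ could be $C^0$--perturbed rel $\partial\Sigma$ so that $T\Sigma=\xi$ along all of $\partial\Sigma$, i.e.\ into an overtwisted disc.)
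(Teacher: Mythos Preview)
Your high-level strategy --- contradict Bennequin by showing $\tb(\partial\Sigma)\ge 0$ --- matches the paper's. But the execution takes a detour and leaves the decisive step undone. The paper's argument is carried out entirely along the boundary: along each retrograde separatrix $\gamma$ (oriented by the flow) it tracks the angle from the oriented plane $T_p\Sigma$ to $\xi_p$, measured around $\dot\gamma$, and shows this angle runs from $\pi$ at the negative hyperbolic endpoint to $2\pi$ at the positive one while remaining in $(\pi,2\pi)$ in between --- a positive half-twist of the contact plane relative to the tangent plane on every edge. Summing over the edges gives $\tb>0$ directly; no interior perturbation, no doubling, no Poincar\'e--Hopf is needed, because $\tb$ is by definition the twisting of $\xi$ versus $T\Sigma$ along $\partial\Sigma$ and sees nothing of the interior.

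Two concrete issues with your route. First, the quantity you write down --- the relative Euler number of $\xi|_\Sigma$ trivialised by $T(\partial\Sigma)$ --- is the rotation number $rot(K)$, not $\tb(K)$; correspondingly, the indices of the characteristic vector field \emph{as a section of $\xi$} are $\pm1$ according to the \emph{sign} of the singularity (not its elliptic/hyperbolic type), so the interior contributes $(e_+-h_+)-(e_--h_-)$ rather than your $-(e-h)$. Second, the step you label ``the hard part'' --- the local contribution at each boundary half-saddle, and the claim that the retrograde hypothesis forces the reinforcing sign --- is precisely the content of the paper's angle computation. Once you actually attempt it you will find yourself doing that argument, with the interior bookkeeping having added nothing.
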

\begin{proof}
	We want to show that of a condition implies that the contact framing rotates positively around the surface framing. This will contradicts the Bennequin's inequality $\tb(K) \le -1$ \cite[Thm.\ 4.6.36]{geigesIntroductionContactTopology2008}. 
	
	Consider a retrograde separatrix $\gamma$. Since the Thurston-Bennequin number is indifferent to orientations, we can suppose that the disk is oriented such that is coherent with the orientation of $\gamma$ given by $S_\xi$. At any point $p \in \gamma$, there is a well defined angle $\theta \in S^1$ between $T_pS$ and $\xi_p$, defined as the angle one needs to rotate the \emph{oriented} former plane in order to align it to the latter. See \Cref{figangolo}, where such oriented planes are represented, including the dual vector $\alpha_p^\dagger$.  The rotation is computed positively with respect to $\vec{\gamma}$. At $\gamma(0)$ such angle is $\pi$, at $\gamma(1)$ it is $0 = 2\pi$.
	
	Moreover, at every such point $p \in \gamma$ different from the endpoints we can consider an arbitrary vector $v_p$ such that $\Omega(X_p,v_p) > 0$, where $X$ defines the characteristic foliation. By definition of $X$ we know that $\alpha(v_p) > 0$. Hence $v$ must belong to the \emph{positive component} of $T_pM \setminus \xi_p$ determined by the orientation on $\xi_p$. Consequently the angle at $p$ belongs to $(\pi,2\pi)$.
	
	So, while we are traveling through $\gamma$, the angle between $T_pS$ and $\xi_p$ is contained in $(\pi,2\pi)$. But this implies that such angle is homotopic rel boundary to the linear rotation going from $\pi$ to $2\pi$. But this is a positive half-twist with respect to $T_pS$.

	Consequently, while traversing the full boundary $\partial \Sigma$, the contact framing is doing positive twists around the surface framing.
\end{proof}
\begin{prop}\label{maketomographyallconvex}
	Let $\xi$ be an invariant tight contact structure on a equivariant tomography $S \times [-1,1]$, $S  = S^2$, such that $S^{z}$ are Morse--Smale, and hence convex, near the $z = 0,1$ levels. Then $\xi$ is equivariantly isotopic rel boundary to a contact structure for which each $S^z$, $z \in [-1,1]$ is a convex surface.
\end{prop}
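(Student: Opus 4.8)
The plan is to first put the film of characteristic foliations into the generic position of \Cref{genfilm}, and then to remove the retrograde connections one bad level at a time by a local manipulation, keeping everything $u$-equivariant and rel boundary.

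First I would apply \Cref{genfilm} to replace $\xi$, by an equivariant isotopy rel boundary supported away from the region where each $S^z$ is already Morse--Smale, by a contact structure whose film $\{S^z_\xi\}_{z\in[-1,1]}$ is generic: there are only finitely many degenerate singularities, retrograde hyperbolic-hyperbolic connections occur only at finitely many levels $z_1<\dots<z_k$, exactly two of them at each such level, and no other saddle connection occurs there. As in the proof of \Cref{CinftysurfMorseSmale}, \Cref{LefschetzS2} shows that every singular point of every $S^z_\xi$ lies off the two lines of fixed points, so the involution interchanges the two retrograde connections at a bad level, and they sit in disjoint $u$-translated neighbourhoods of $S$.

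Next I claim that it is enough to exhibit, for a single bad level $z_j$, an equivariant isotopy rel boundary supported in a slab $S\times[z_j-\epsilon,z_j+\epsilon]$ that contains no other bad level and no degenerate singularity, after which the film has no retrograde connection inside the slab and no new bad level has appeared; iterating over $j$ then finishes the argument. Inside the slab I would carry out the classical manipulation near a retrograde connection: slightly before the unstable separatrix of the negative saddle would meet the stable separatrix of the positive saddle, insert a Giroux cancelling pair (an elliptic and a hyperbolic point, of the appropriate sign) so that the separatrix now dies at the new elliptic point instead of reaching the other saddle, and cancel the pair again slightly afterwards; performing this on one of the two retrograde connections and transporting it by $u$ to the other --- which is legitimate because the two connections occupy disjoint neighbourhoods --- produces an anti-invariant film on the slab that agrees with the old one near $S\times\{z_j\pm\epsilon\}$, is Morse--Smale at all but finitely many levels, and carries no retrograde connection. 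Since each slice of this film meets the divergence condition of \Cref{charinducescontact}, and the contact condition \eqref{eq:contcond} is convex once the forms $\beta_z$ are fixed, the film is the film of characteristic foliations of an invariant contact structure $\xi'$ on the slab coinciding with $\xi$ near its two boundary spheres; \Cref{EGray} together with \Cref{Girouxbound} then produces an equivariant isotopy rel the boundary of the slab from $\xi'$ to the restriction of $\xi$, which we glue back in by a cutoff.

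After $k$ such steps the film has no retrograde hyperbolic-hyperbolic connection at all, while still having only finitely many degenerate singularities on each slice, so \Cref{tomographynoretrograde} makes every $S^z$ convex, and the composite isotopy is equivariant and rel boundary. The main obstacle is the local model inside the slab: one has to check that inserting the cancelling pair genuinely destroys the retrograde connection without creating another one elsewhere and without spoiling the Morse--Smale property on the neighbouring levels, and that the modified film can be taken anti-invariant and realised by a contact isotopy fixing both boundary spheres of the slab --- this is where \Cref{nocircleretrograde} enters, as it excludes the potential obstruction to cancelling, an uncancellable chain of retrograde connections bounding a disc being forbidden in a tight contact structure.
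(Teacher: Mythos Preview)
Your scaffolding --- generic position via \Cref{genfilm}, handle the finitely many bad levels, then \Cref{tomographynoretrograde} --- matches the paper. The gap is the local move. You want to \emph{insert} a cancelling pair on the retrograde separatrix so that the unstable separatrix of $x_h^-$ dies at the new elliptic; for that the pair must be negative, and its new saddle $h^{\text{new}}$ then has a second unstable separatrix which, in the local creation model, continues along the old connection straight into $x_h^+$. You have only traded $x_h^-\to x_h^+$ for the retrograde connection $h^{\text{new}}\to x_h^+$; offsetting the pair from the separatrix instead lets the old connection pass by unchanged. Nothing in your outline prevents this, and your reading of \Cref{nocircleretrograde} as excluding ``an uncancellable chain bounding a disc'' is not its role here. (A secondary issue: convexity of \eqref{eq:contcond} in $w_z$ does not by itself produce a $w_z$ for an arbitrarily modified family $\beta_z$, so realising your abstract film as a contact structure is not automatic.)

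The paper performs the opposite manipulation: it \emph{eliminates} $x_h^+$ together with a positive elliptic $x_e^+$, namely the $\alpha$-limit of the \emph{other} stable separatrix of $x_h^+$. Were this $\alpha$-limit hyperbolic, the genericity of \Cref{genfilm} would force it to be the second of the two $u$-paired retrograde connections, so both stable separatrices of $x_h^+$ would come from $x_h^-$ and bound a disc by a Legendrian circle of retrograde connections --- \emph{this} is precisely what \Cref{nocircleretrograde} excludes. Hence $(x_e^+,x_h^+)$ is an elimination pair; cancelling it (equivariantly, its $u$-image being a disjoint pair of opposite sign) removes $x_h^+$ and with it the retrograde connection, introducing no new hyperbolic point. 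Since the elimination lemma is implemented directly as a contact isotopy via Gray stability, the question of realising an abstract film never arises.
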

\begin{proof}
	By the discussion above, it suffices to find an isotopy that helps us to get rid of all retrograde saddle–saddle connections. Firstly, we may make all the genericity assumptions discussed up to and with \Cref{genfilm}. Then, by \Cref{tomographynoretrograde}, the surfaces $S^z$ will be convex, except at finitely many levels $z_1, \dots,z_k \in (-1,1)$, where we have two coupled retrograde hyperbolic–hyperbolic connections. For ease of notation, we assume that $z = 0$ is such an exceptional level. We write $x^{\pm}_h$ for the hyperbolic points on that level with a retrograde connection	between them.
	
	Consider the stable separatrix $\gamma$ of $x_h^+$ not coming from $x_h^-$. Its $\alpha$-limit must be a singularity.
	
	If such singularity is an hyperbolic point, then by our genericity assumption such connection must be a retrograde one, coupled with $\gamma$. Hence we are in the situation in which two hyperbolic points, of different signs, are connected through two retrograde connections. However, this situation is ruled out by \Cref{nocircleretrograde}.
	
	Hence $\gamma$ must emanate at a positive elliptic point $x_e^+$. Following the same procedure of \cite[Prop.\ 4.9.7]{geigesIntroductionContactTopology2008}, we observe that we can eliminate the pair $(x_e^+, x_h^+)$ without introducing a new hyperbolic-hyperbolic connections. Moreover, this operation is local on the connected couple, that must be sent ``away'' by the action for sign reasons. Hence we can suppose to work equivariantly.
\end{proof}
This proof shows that by an isotopy rel boundary we may assume that on each slice $S^z$ we have a situation as described in \Cref{tomographynoretrograde}. The last step in the proof of \Cref{thm:mainthmtomographyStwo} is the following lemma.

\begin{lemma}\label{choosezsmoothdividingset}
	Let $\xi$ be a an invariant tight contact structure on an equivariant tomography $S\times[-1,1]$, $S = S^2$, with the properties just described. Then we can choose a dividing set $\Gamma_z$ for	$S^z_\xi$ (consisting of a single circle) that varies smoothly with $z$.
\end{lemma}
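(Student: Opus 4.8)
The plan is to subdivide the parameter interval into finitely many pieces on which the film of characteristic foliations is structurally stable, build a smoothly varying family of dividing circles on each piece via the construction of \Cref{MorseSmaleconvex}, and then glue the pieces across the finitely many exceptional levels. By \Cref{genfilm,maketomographyallconvex} there are finitely many levels $z_1 < \dots < z_N$ off of which $S^z_\xi$ is Morse--Smale, and at each $z_i$ one has either a generic saddle--node (a degenerate singular point, occurring in a $u$-swapped pair of opposite signs) or a hyperbolic--hyperbolic connection between same-sign singular points (again in a $u$-swapped pair). Crucially, \Cref{tomographynoretrograde} still applies at \emph{every} level, since there are no retrograde connections anywhere: at each $z$ the surface $S^z$ is convex with dividing set the boundary of a disc-like neighbourhood $N_z$ of the connected tree $G_z$ of positive singular points and stable separatrices of positive hyperbolic points, and this boundary is automatically a single circle, consistently with \Cref{divsetS2iscircle}. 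I would also record at the outset that, because $u$ reverses the sign of singularities while preserving the orientation of each $S^z$, no singular point can lie on the fixed-point locus of $u|_{S^z}$; hence every bifurcation at the $z_i$ takes place inside a small disc disjoint from the fixed points and from its $u$-image, which is what will make the construction equivariant for free.

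First I would treat a compact subinterval $J$ whose interior misses all the $z_i$. There the positive singular points depend smoothly on $z$ by the implicit function theorem and non-degeneracy, and their stable separatrices depend smoothly on $z$ in the $C^\infty$ topology; thus $G_z$ is a smoothly varying family of trees. Running the argument of \Cref{MorseSmaleconvex} with all choices (the sets $S^0_\pm$, the nerve coordinates, the successive rescalings of the area form) made to vary smoothly with $z \in J$, and keeping $N_z$ invariant with $u$ acting on $\partial N_z$ as a nerve in the sense of \Cref{nerve}, produces a smoothly varying invariant family $\Gamma_z = \partial N_z$ of dividing circles over $J$.

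Then I would analyse the transition across a single $z_i$, showing that the two one-sided families on $[z_i-\epsilon,z_i]$ and $[z_i,z_i+\epsilon]$ can be matched near $z_i$. In the saddle--node case $G_z$ either acquires or loses a leaf edge contained in a small disc $D$ (if the born/dead pair is positive), or is left unchanged (if the pair is negative, in which case only a small finger of $\partial N_z$ is pushed near the $u$-image disc); in the same-sign connection case $G_z$ undergoes a reconnection inside a small disc and stays a tree by \Cref{divsetS2iscircle}, while the $u$-paired negative connection does not affect $G_z$ at all. In every case a regular neighbourhood of $G_z$, hence $\Gamma_z$, varies across $z_i$ by an isotopy supported in a small disc disjoint from the fixed points; a small smoothing together with a reparametrisation in $z$ near $z_i$ — absorbed into an equivariant isotopy of $\xi$ rel boundary via \Cref{EGray} and the openness of the convexity and dividing conditions — then yields a single smooth invariant family $\Gamma_z$ of single circles over all of $[-1,1]$.

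The main obstacle is this last step: one must inspect the standard local models of a saddle--node bifurcation and of a same-sign saddle connection and check that the dividing circle varies genuinely \emph{smoothly} (not merely isotopically) through each $z_i$, and that the reparametrisations needed to glue adjacent pieces can be performed without destroying smoothness or equivariance. The absence of retrograde connections is precisely what excludes the one local model — a retrograde saddle--saddle connection — in which $\Gamma_z$ would be forced to jump; everything else is tame, but verifying this, and checking that the various smoothly-varying choices inside \Cref{MorseSmaleconvex} can be made coherently across an exceptional level, is where the real work lies.
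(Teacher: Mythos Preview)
Your plan is essentially the paper's own approach: define $\Gamma_z$ as the boundary of a disc-like neighbourhood of the positive tree $G_z$ (equivalently, as a nerve of the annulus between $G_z^+$ and $G_z^-$), observe that the geometry of $G_z$ changes only at finitely many exceptional levels, and argue that the nerve can be chosen so as to pass smoothly through those levels. The paper organises this slightly more economically than you do: rather than constructing $\Gamma_z$ separately on each structurally stable subinterval and then worrying about gluing, it simply remarks that since the changes to $G_z^\pm$ at an exceptional level are confined to a small disc, one may pick the nerve of the intervening annulus to be ``centred enough'' in advance so that it never sees the change. This sidesteps your gluing step entirely and makes the appeal to \Cref{EGray} and to any further isotopy of $\xi$ unnecessary --- the lemma only asks you to \emph{choose} $\Gamma_z$, not to modify the contact structure, and the nerve construction already gives a smooth family once the nerve is kept away from the loci where the trees reorganise.

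Your more detailed bifurcation analysis (saddle--node versus same-sign connection, with the observation that the $u$-paired event happens in a disjoint disc) is correct and would be needed to flesh out either version honestly; the paper leaves this implicit. Your closing caveat that ``the real work lies'' in checking smoothness through the local models is fair, and applies equally to the paper's proof, which is terse on exactly this point.
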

\begin{proof}
	The dividing set $\Gamma_z$ can be defined as the boundary of disk-like neighborhoods of the trees $G_z^{\pm}$ defined in \Cref{tomographynoretrograde}, made up of positive or negative singularities and their stable
	or unstable manifolds, respectively, i.e. the collection of flow lines whose $\omega$ or $\alpha$–limit set, respectively, is one of the corresponding singular points. 
	
	As we vary $z$, and pass though a level containing degenerate singular points, the trees $G^{\pm}_z$ can vary their geometries, with new edges and vertices appearing. Remember that $u(G^z_+) = G^z_-$.
	
	Recall \Cref{MorseSmaleconvex}: the dividing set $\Gamma_z$ is given by choosing a nerve of the annulus between the negative and positive parts surrounding $G^\pm_z$ respectively. Thus, if a change in the geometry of $G^\pm_z$ happens at some level $z_k$, we just need define $\Gamma_z$ such that, once the level $z_k$ is reached, the nerve is centered-enough to avoid the change. In this way $\Gamma_{z_k}$ remains a diving circle.
\end{proof}

Now that we have all the pieces, we can prove \Cref{thm:mainthmtomographyStwo}, which we restate.

\setcounter{thmR}{\getrefnumber{thm:mainthmtomographyStwo}}
\addtocounter{thmR}{-1} 
\begin{thmR}
	Let $S^2 \times [-1,1]$ an equivariant tomography. The set of invariant tight contact structure on $S^2 \times [-1,1]$ having fixed characteristic foliation on the boundary, up equivariant isotopy rel boundary, is a $\Z$-torsor.
\end{thmR}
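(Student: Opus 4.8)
The plan is to produce a complete $\Z$-valued invariant of the equivariant isotopy class rel boundary --- the twisting difference of \Cref{twdif} --- show it separates classes and attains every value, and then transport along it the $\Z$-torsor structure of \Cref{twZtorsor}. \emph{Normalisation.} Fix a representative $\xi_0$. After an equivariant isotopy rel boundary (apply \Cref{CinftysurfMorseSmale} to the slices $S^z$ with $z$ close to $\pm1$, using that the conditions involved are $C^1$-open) we may assume its slices are Morse--Smale, hence convex, for $z$ near $\pm1$; and, via \Cref{standardizedtomography} together with a further equivariant conjugation, we may assume $u$ has the product form with $u|_{S^2}$ the standard rotation by $\pi$ about a fixed axis. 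Given any other representative $\xi$, the foliation it induces on $S^{\pm1}$ equals that of $\xi_0$, so by \Cref{EGiroux} and \Cref{Girouxbound} (extending the resulting collar contactomorphism by isotopy extension) an equivariant isotopy rel boundary makes $\xi$ coincide with $\xi_0$ on a collar of the boundary; in particular its slices are convex near $z=\pm1$. By \Cref{maketomographyallconvex} a further equivariant isotopy rel boundary --- stationary near the boundary, by \Cref{rmkboundary} --- arranges that every $S^z$ is convex, and \Cref{choosezsmoothdividingset} provides a smooth family of single-circle dividing sets $\Gamma_z$ with $\Gamma_{\pm1}$ equal to that of $\xi_0$. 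Since $\xi$ is tangent to any line $F$ of fixed points along $F$ (as observed before \Cref{twdif}), \Cref{twdefequal} shows that the $F$-twisting difference $d_F(\xi,\xi_0)$ is independent of $F$ and equals the convex twisting difference $[\Gamma]-[\Gamma^0]\in\pi_1(\Loop^{eq}(S^2))\cong\Z$ of \Cref{EBaer}; by \Cref{parametricdividedetermined} it is independent of the family $\Gamma_z$. Thus $\xi\mapsto d_F(\xi,\xi_0)$ descends to a well-defined map $\Theta$ from the classifying set to $\Z$.

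\emph{$\Theta$ is injective.} Let $\xi_1,\xi_2$ be representatives normalised as above with $d_F(\xi_1,\xi_0)=d_F(\xi_2,\xi_0)$, so their dividing families $\Gamma^1_z,\Gamma^2_z$ are homotopic in $\Loop^{eq}(S^2)$ rel the common endpoints. I would realise this homotopy by an equivariant, level-preserving ambient isotopy $\phi_t$ of $S^2\times[-1,1]$, stationary near the boundary, with $\phi_0=\id$ and $\phi_1(\Gamma^1_z)=\Gamma^2_z$ for all $z$: for each $z$ one extends the isotopy of the embedded invariant circle to an equivariant ambient isotopy of $S^2$, depending smoothly on $z$, by the invariant-tubular-neighbourhood argument used in the proofs of \Cref{Palais} and \Cref{InvolutionsS2}. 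Then $\xi_1'\coloneqq T\phi_1(\xi_1)$ is invariant and tight, agrees with $\xi_2$ near the boundary, has every $S^z$ convex, and is divided slicewise by $\Gamma^2_z$ --- the same family that divides $\xi_2$. The Uniqueness Lemma \Cref{uniqueness}, together with \Cref{rmkboundary}, then yields an equivariant isotopy rel boundary from $\xi_1'$ to $\xi_2$; composing it with $\phi_t$ shows $\xi_1$ and $\xi_2$ are equivariantly isotopic rel boundary. Hence $\Theta$ is injective.

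\emph{$\Theta$ is surjective, and conclusion.} For $n\in\Z$ let $R_\vartheta$ be the rotation of $S^2$ by $\vartheta$ about the axis of $u|_{S^2}$ (so $R_\vartheta$ commutes with $u$), choose $b\colon[-1,1]\to\R$ smooth with $b\equiv0$ near $-1$ and $b\equiv2\pi n$ near $1$, and set $\Psi_n(s,z)\coloneqq(R_{b(z)}(s),z)$, an equivariant diffeomorphism equal to the identity near both boundary spheres. Then $\Psi_n^*\xi_0$ is again invariant, tight (tightness is a diffeomorphism invariant), and induces the prescribed boundary foliation; moreover $\Psi_n$ carries its slicewise dividing circles to those of $\xi_0$, so the dividing family of $\Psi_n^*\xi_0$ winds $n$ times (up to sign) about the rotation axis relative to that of $\xi_0$. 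As this winding generates $\pi_1(\Loop^{eq}(S^2))\cong\Z$, \Cref{twdefequal} gives $\Theta([\Psi_n^*\xi_0])=\pm n$, so $\Theta$ is onto $\Z$. Being a bijection, $\Theta$ lets us pull back the $\Z$-torsor structure on plane fields tangent to $F$ with fixed endpoints furnished by \Cref{twZtorsor}; this exhibits the set of invariant tight contact structures on $S^2\times[-1,1]$ with fixed boundary foliation, up to equivariant isotopy rel boundary, as a $\Z$-torsor.

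\emph{Main obstacle.} The crux is the injectivity step: promoting a homotopy between the smoothly varying families of dividing circles to an \emph{equivariant}, level-preserving ambient isotopy that is \emph{stationary near the boundary}, so that the hypotheses of the Uniqueness Lemma are met verbatim. This is exactly where the parametrised equivariant isotopy-extension machinery --- together with the discreteness of the set of fixed lines, which prevents $F$ from moving --- carries the argument.
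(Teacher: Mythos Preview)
Your proof is correct and follows essentially the same route as the paper: normalise near the boundary via \Cref{Girouxbound}, convexify every slice using \Cref{maketomographyallconvex} and \Cref{choosezsmoothdividingset}, identify the twisting difference with the convex one via \Cref{twdefequal}, realise the homotopy of dividing families by a level-preserving equivariant isotopy, and conclude with the Uniqueness Lemma. The one substantive addition is your surjectivity step: the paper's own proof only establishes that vanishing twisting difference implies equivariant isotopy rel boundary, and never exhibits structures realising every integer, whereas your explicit twist maps $\Psi_n$ fill this gap and make the $\Z$-torsor claim complete.
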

\begin{proof}
	Let's restate the different steps. Let $\xi_1$ and $\xi_2$ be two invariant contact structure as stated, such that $d_F(\xi_1, \xi_2) = 0$ for some line $F$ of fixed point.
	\begin{enumerate}
		\item By \Cref{Girouxbound} we can suppose that, after an equivariant isotopy fixing the boundary, the contact structures coincide near the boundary.
		\item \Cref{Cinftyconvex} allows us to locally find convex 2–spheres $S^2_\pm$ isotopic to $S^2 \times \{\pm1\}$ inside the neighborhoods where the two contact structures coincide. If such new spheres are chosen close enough to the the original ones, the region between is still an equivariant tomography having our convention for the group action. Thus, we can suppose that $S^2 \times \{\pm1\}$ to be already convex.
		\item Thanks to \Cref{genfilm}, up to (different) equivariant isotopies rel boundary, we can suppose that the two tomographies to be standardized.
		\item Thanks to \Cref{maketomographyallconvex} we can use (different) equivariant isotopies rel boundary, to make the tomographies made of convex levels.
		\item Thanks to \Cref{choosezsmoothdividingset} we can suppose to having chosen two smooth families of dividing curves, $\Gamma_z^1$, $\Gamma_z^2$. Hence we can apply \Cref{twdefequal}: the integer $d_F(\xi_1, \xi_2)$ is independent of $F$, and coincides with the convex twisting difference $[\Gamma^1_z] - [\Gamma^2_z]$.
		\item Since $[\Gamma_1]-[\Gamma_2]$ vanishes there is level-preserving equivariant isotopy of $S^2 \times [-1,1]$ such that  $\Phi(\Gamma^1_z)$ coincides with $\Gamma^2_z$. Moreover, since $\xi_0$ and $\xi_1$ coincide near the boundary, we can suppose that such $\Phi$ is the identity in such regions
		\item The contact structures $\Phi^*\xi_1$, $\xi_0$ satisfy the requirement for applying \Cref{uniqueness}. Hence there exist a further equivariant isotopy rel boundary $\Theta$ such that $\Theta^*\Phi^*\xi_1$ coincides with $\xi_0$.
	\end{enumerate}
	This concludes the proof.
\end{proof}
\section{Proof of Theorems \getrefnumber{thmmainthmRthree} and \getrefnumber{thmmainthmDthreecomplementary}}\label{sec:proofmainthmopen}
The aim of this section is to prove \Cref{thmmainthmRthree,thmmainthmDthreecomplementary}. Let's start with the first one, which we restate.

\setcounter{thmR}{\getrefnumber{thmmainthmRthree}}
\addtocounter{thmR}{-1}
\begin{thmR}
	The set of tight contact structures on $\R^3$ up to equivariant isotopy is trivial.
\end{thmR}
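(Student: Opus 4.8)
The plan is to run the classical exhaustion argument proving uniqueness of tight contact structures on $\R^3$, replacing each local classification step by the equivariant tomography classification \Cref{thm:mainthmtomographyStwo}, and then to annihilate the $\Z$-ambiguity it introduces on each shell by twisting towards infinity — this last move being exactly where the non-compactness of $\R^3$ is used, in contrast with the compact cases \Cref{thmmainthmDthree} and \Cref{thmmainthmSthree}. In detail: up to conjugation (see the discussion in \Cref{sec:introdution}) we may take $u(x,y,z)=(x,-y,-z)$, so that the $x$-axis $F$ is a line of fixed points. Given invariant tight contact structures $\xi_0,\xi_1$ on $\R^3$, \Cref{equalneighorigin} yields an equivariant isotopy after which $\xi_0=\xi_1$ on a neighbourhood of the origin; fix an exhaustion of $\R^3$ by round (hence $u$-invariant) balls $B_0\subset B_1\subset\cdots$ with $B_0$ inside that neighbourhood and $\bigcup_n B_n=\R^3$. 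The goal is to build equivariant isotopies $\Psi^{(n)}$, each equal to the identity on $B_{n-1}$, such that after applying $\Psi^{(1)},\dots,\Psi^{(n)}$ in turn the structure $\xi_1$ agrees with $\xi_0$ on all of $B_n$ and is left unchanged outside $B_n$.

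For the inductive step assume $\xi_1=\xi_0$ on $B_{n-1}$. After slightly enlarging $B_n$ and $C^\infty$-perturbing $\partial B_n$ equivariantly and away from $B_{n-1}$ by \Cref{CinftysurfMorseSmale} and \Cref{Cinftyconvex}, we may assume $\partial B_n$ is convex for $\xi_0$; since tightness forces a dividing set on an embedded $2$-sphere to be a single circle (\Cref{divsetS2iscircle}), equivariant Giroux flexibility — a consequence of \Cref{EGiroux}, \Cref{Girouxbound} and the convex theory of \Cref{sec:EConvex} — lets us further isotope $\xi_1$ with support near $\partial B_n$ until $\xi_1$ and $\xi_0$ have the same germ along $\partial B_n$. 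The shell $A\coloneqq\overline{B_n\setminus B_{n-1}}$ is then, after \Cref{standardizedtomography}, an equivariant tomography on which $\xi_0$ and $\xi_1$ have the same germ near the boundary, so we may form the $F$-twisting difference $d\coloneqq d_F(\xi_1,\xi_0)\in\Z$ of \Cref{twdif}.

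To kill $d$, let $\tau$ be the diffeomorphism of $\R^3$ given by rotation about the $x$-axis through the angle $2\pi d\,\rho(\lVert\cdot\rVert)$, where $\rho$ is a cutoff equal to $0$ on $B_{n-1}$ and $1$ outside $B_n$. Because $\rho$ depends only on the $u$-invariant quantity $\lVert\cdot\rVert$ and $u$ is itself rotation by $\pi$ about the $x$-axis, $\tau$ commutes with $u$; it is the identity on $B_{n-1}$ and on $\R^3\setminus\mathring{B}_n$, and on $A$ it is a $d$-fold Dehn twist about $F$, reached by the equivariant ambient isotopy of $\R^3$ through rotations by $2\pi dt\,\rho$. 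By the description of the twisting torsor through trivialisations of the normal bundle $NF$ (\Cref{twZtorsor}, \Cref{twdif}), the structure obtained from $\xi_1$ by the twist $\tau$ has $F$-twisting difference $d-d=0$ from $\xi_0$ on $A$, still agrees with $\xi_0$ on $B_{n-1}$, and is unchanged outside $B_n$. Then \Cref{thm:mainthmtomographyStwo} together with \Cref{rmkboundary} supplies an equivariant isotopy supported in $A$ carrying the twisted $\xi_1|_A$ to $\xi_0|_A$ rel boundary; composing all of the above yields $\Psi^{(n)}$ and completes the induction.

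Since each $\Psi^{(n)}$ is the identity on $B_{n-1}$ and every point of $\R^3$ lies in some $B_{n-1}$, each point is moved during only finitely many stages, so the concatenation $\Psi^{(1)}\ast\Psi^{(2)}\ast\cdots$, reparametrised over $[0,1)$, converges in $C^\infty_{\mathrm{loc}}$ to an equivariant diffeomorphism $\Psi_1$, isotopic to the identity, with $T\Psi_1(\xi_0)=\xi_1$ on every $B_n$, hence on all of $\R^3$; this is the desired equivariant isotopy, and it starts at the identity. The main obstacle — and the real content of the theorem — is precisely the twisting step: the bare shell-by-shell argument only classifies $\xi_1$ modulo the $\Z$ of \Cref{thm:mainthmtomographyStwo}, and one must both construct an equivariant twist that shifts this invariant by the prescribed amount and verify that the resulting infinite family of isotopies converges; on a closed ball or sphere no such twist is available, which is exactly why those cases are honest $\Z$-torsors.
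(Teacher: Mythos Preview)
Your argument is correct and reaches the same conclusion as the paper, but the mechanism for neutralising the $\Z$-torsion of \Cref{thm:mainthmtomographyStwo} is genuinely different. The paper first applies a single global untwisting (\Cref{Rthreestandardxaxis}): it rotates each $\xi_i$ about the $x$-axis so that the contact plane is \emph{constant} along $[x_0,+\infty)$; after this one move the $F$-twisting difference on every shell is automatically zero, and the spheres are standardised to two elliptic points via \Cref{spehrestandardizable} before invoking \Cref{thm:mainthmtomographyStwo}. You instead leave the axis alone and, on each shell, manufacture an equivariant Dehn twist $\tau$ to cancel whatever integer $d$ appears, pushing the accumulated twist outward step by step. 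Both exploit non-compactness in the same spirit---the paper's rotation with angle $\theta(\rho)$ and your rotation with angle $2\pi d\,\rho$ are the same family of maps---but the paper's version is a single preparatory isotopy whereas yours is folded into the induction. Your route is more modular and makes the r\^ole of non-compactness very explicit; the paper's is shorter since no twisting ever needs to be computed.

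Two small points to tighten. First, to invoke \Cref{rmkboundary} you need $T\tau(\xi_1)=\xi_0$ in a two-sided neighbourhood of $\partial B_n$, so choose $\rho\equiv 1$ already on a collar just \emph{inside} $\partial B_n$, not merely outside $B_n$. Second, ``Giroux flexibility'' as you cite it needs $\partial B_n$ to be convex for $\xi_1$ as well, with equivariantly isotopic dividing circle; this follows from a second application of \Cref{Cinftyconvex} (to $\xi_1$) together with \Cref{divsetS2iscircle} and the connectedness of $\Loop^{eq}(S^2)$ from \Cref{EBaer}, but deserves a sentence. The paper handles this step instead via \Cref{spehrestandardizable}, reducing both foliations to the two-elliptic-point model.
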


The idea is that since $\R^3$ doesn't have a boundary, the twisting difference, that we defined in the previous section, is now meaningless. Let $\xi$ an invariant tight contact structure on $\R^3$.
\begin{prop}\label{Rthreestandardxaxis}
	Suppose that $u$ coincides with the standard rotation, and fix a point $x_0$ on the x-axis. There exists an equivariant isotopy $\phi_t$ of $\R^3$ such that $T\phi_1(\xi)$ is constant on the portion $[x_0,+\infty)$ of the x-axis.
\end{prop}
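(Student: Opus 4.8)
The plan is to build the isotopy by hand, rotating each slice $\{x\}\times\R^2$ so as to unwind the way the contact planes turn along the fixed $x$-axis. Write $F=\{(x,0,0)\}$ for the $x$-axis, which is the fixed-point set of $u\colon(x,y,z)\mapsto(x,-y,-z)$. First I would record the local shape of $\xi$ along $F$, exactly as in the remarks preceding \Cref{twdif}: at $p=(x,0,0)$ the differential $d_pu$ equals $\operatorname{diag}(1,-1,-1)$, so the $u$-invariant plane $\xi_p$ is either the $yz$-plane or contains $\partial_x$; since $u$ preserves the orientation of $\R^3$ but reverses the coorientation of $\xi$, it reverses the orientation of $\xi_p$, which excludes the $yz$-plane (on which $d_pu$ acts with determinant $+1$). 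Hence $\partial_x\in\xi_p$ for every $p\in F$, and the line $\xi_p\cap\{yz\text{-plane}\}$, oriented via the coorientation of $\xi$, determines a unit vector $v(x)$ with $(\partial_x,v(x))$ a positive basis of $\xi_p$. This is a smooth map $v\colon\R\to S^1$ into the unit circle of the $yz$-plane, and since $\R$ is simply connected it lifts to a smooth angle $\vartheta_0\colon\R\to\R$ with $v(x)=\cos\vartheta_0(x)\,\partial_y+\sin\vartheta_0(x)\,\partial_z$.

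Next I would set, for $t\in[0,1]$,
\[
	\phi_t(x,y,z)\coloneqq\bigl(x,\,R_{-t\vartheta_0(x)}(y,z)\bigr),
\]
where $R_\psi$ denotes the linear rotation of the $yz$-plane by the angle $\psi$. Each $\phi_t$ is a diffeomorphism of $\R^3$ (with inverse $(x,y,z)\mapsto(x,R_{t\vartheta_0(x)}(y,z))$) and $\phi_0=\id$, so $(\phi_t)_{t\in[0,1]}$ is an isotopy fixing $F$ pointwise; it is equivariant because $u$ acts on each $yz$-slice as the linear map $R_\pi$, which commutes with every $R_\psi$, whence $\phi_t\circ u=u\circ\phi_t$. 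Finally I would read off the effect along $F$. Since $\phi_1(p)=p$ for $p=(x,0,0)$, we have $T\phi_1(\xi)_p=d_p\phi_1(\xi_p)$; and $d_p\phi_1$ fixes $\partial_x$ (as $R_\psi$ fixes the origin for every $\psi$) and restricts on the $yz$-plane to $R_{-\vartheta_0(x)}$, so $T\phi_1(\xi)_p$ is spanned by $\partial_x$ and $R_{-\vartheta_0(x)}v(x)=\partial_y$. This plane is independent of $x$, so $T\phi_1(\xi)$ is the constant $xy$-plane along all of $F$, in particular along $[x_0,+\infty)$.

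There is no real obstacle here: tightness of $\xi$ is never used, and the content is simply that an $S^1$-valued path over the contractible line $\R$ can be straightened, realised ambiently by a rotation in each normal slice. The only step requiring a (short) argument is the identification $\partial_x\in\xi_p$ for $p\in F$, which is already contained in the discussion preceding \Cref{twdif}.
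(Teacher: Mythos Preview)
Your proof is correct and follows essentially the same idea as the paper's: measure the angle of $\xi$ along the fixed axis, lift it through the covering $\R\to S^1$, and undo it by an ambient rotation about the $x$-axis, which is automatically $u$-equivariant. The only cosmetic difference is the slicing: you rotate each plane $\{x\}\times\R^2$ by $-\vartheta_0(x)$, whereas the paper rotates each sphere of radius $\rho\ge x_0$ by the angle read off at the point $(\rho,0,0)$; your version is globally defined and straightens the whole axis, while the paper's is tailored to $[x_0,+\infty)$ and implicitly left to be extended inside the ball of radius $x_0$.
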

\begin{proof}
	Let's consider the angle
	\[
		\theta(t) \colon [x_0,+\infty) \to S^1
	\]
	spanned from the $xy$-plane to the contact one at a point $p$, computed positively around the $x$-axis. Let's lift it to a map $\tilde{\theta}$, from $ [x_0,+\infty)$ to the universal cover $\R$.
	
	By denoting the points in $\R^3$ as $\rho\,\hat{v}$, with $\rho$ the norm and $\hat{v} \in S^3$, the desired isotopy $\phi_t$ is then defined as
	\[
		\phi_t(\rho\,\hat{v}) = \rho\,A_{-t\theta(\rho)}\hat{v} \quad \rho \in [x_0, +\infty)\,,
	\]
	with
	\[
		A_{\nu} \coloneqq
		\begin{bmatrix}
			1 & 0 & 0 \\
			0 & \cos\nu & -\sin\nu \\
			0 & \sin\nu & \cos\nu
		\end{bmatrix}\,.
	\]\vspace{0.2cm}
	
	By construction $T\phi_1(\xi)$ has constant value $\ang{\partial_x, \partial_y}$ on the portion $[x_0, +\infty)$ of the $x$-axis.
\end{proof}
\begin{prop}\label{spehrestandardizable}
	Every standard 2-sphere $S \hookrightarrow \R^3$ of any radius in an invariant tight contact structure $\xi$ can be equivariantly perturbed such that it has only two elliptic points: one positive and one negative.
\end{prop}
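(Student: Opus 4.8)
The plan is to simplify the characteristic foliation $S_\xi$ in two stages: first make it generic, then repeatedly cancel elliptic–hyperbolic pairs, everything carried out equivariantly. I would begin by noting that the two fixed points $p_\pm$ of $u|_S$ --- there are exactly two, by \Cref{LefschetzS2} --- are never singular points of $S_\xi$. Indeed, at such a point $d_pu$ acts as $-\id$ on $T_pS$, whereas on $\xi_p$ it must reverse the orientation (because $u^*\alpha=-\alpha$) and hence act with determinant $-1$; were $\xi_p=T_pS$ these two endomorphisms would coincide, which is absurd. Therefore $S$ meets the hypotheses of \Cref{CinftysurfMorseSmale}, and after an equivariant $C^\infty$-small perturbation we may assume that $S_\xi$ is of Morse--Smale type; by \Cref{MorseSmaleconvex} the sphere is then convex, and since $\xi$ is tight and $S\cong S^2$ the foliation $S_\xi$ has no closed leaves.

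Next comes the cancellation step. Suppose $S_\xi$ still has a hyperbolic point; as $u$ exchanges positive and negative singularities, it then has a \emph{positive} hyperbolic point $h$. Following a stable separatrix $\gamma$ of $h$ backwards, its $\alpha$-limit set --- by Poincaré--Bendixson together with the absence of closed leaves and the Morse--Smale conditions forbidding polycycle limits and hyperbolic--hyperbolic connections --- is a single elliptic point $e$. This $e$ cannot be negative: recalling from \Cref{charinducescontact} that $\diver_\Omega(X)$ has the sign of the singular point, $e$ would lie in $S_-$ and $h$ in $S_+$, so $\gamma$ would have to cross a dividing curve from the $S_-$ side to the $S_+$ side, against the flow direction fixed in \Cref{defconvex}. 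Hence $e$ is positive and $(e,\gamma,h)$ is an elimination pair. Applying the equivariant elimination lemma to this pair and to its disjoint $u$-image (a negative pair) yields, after another equivariant $C^\infty$-small perturbation, a surface whose characteristic foliation coincides with the previous one outside two small discs and their $u$-translates and is non-singular inside them; it is therefore again Morse--Smale, with strictly fewer singular points. Iterating finitely often, we reach a position in which $S_\xi$ has no hyperbolic points.

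It remains to count the elliptic points: by Poincaré--Hopf their number is $\chi(S^2)=2$, each index being $+1$, and since $u$ reverses the sign of singularities exactly one is positive and one negative. Composing all the equivariant perturbations used above proves the statement. The delicate point of the argument is the cancellation step: one must know that the equivariant elimination lemma preserves the Morse--Smale (hence convex) structure, so that it can be reapplied --- this is guaranteed because the classical elimination construction is supported in a disc (here a disc and its $u$-translate) and leaves the foliation untouched elsewhere, and because the fixed points $p_\pm$, being non-singular, never enter into any elimination pair.
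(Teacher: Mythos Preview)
Your argument is correct and follows the same overall strategy as the paper: perturb equivariantly to Morse--Smale, locate same-sign elliptic--hyperbolic pairs, cancel them equivariantly, and iterate until only two elliptic points of opposite sign remain. The paper finds each pair by starting from a positive \emph{elliptic} point and invoking the basin dichotomy from the proof of \cite[Thm.\ 4.6.30]{geigesIntroductionContactTopology2008} (together with the count $e_+-h_+=1$), whereas you start from a positive \emph{hyperbolic} point and trace a stable separatrix back to a source; your dividing-curve argument that this source is positive is valid, though it could be shortened by noting that a negative elliptic point is a sink and hence cannot be an $\alpha$-limit.

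One point to tighten: your claim that the post-elimination foliation is ``therefore again Morse--Smale'' is not fully justified by locality alone. Locality guarantees that the remaining singularities are generic and that no closed orbit appears, but it does \emph{not} exclude a new saddle--saddle connection passing through the modified disc (a separatrix that formerly limited on the eliminated source now threads through the disc and could, in principle, hit another hyperbolic point). This is harmless---either choose the elimination generically, or re-apply \Cref{CinftyMorseSmale} to break any such connection without adding singularities---and the paper's proof sidesteps the same issue by deferring to the cited classical argument.
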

\begin{proof}
	The non-equivariant statement is contained in \cite[Thm.\ 4.6.30]{geigesIntroductionContactTopology2008}, where the following inequality is proven:
	\begin{equation}\label{exiS}
		\abs{\ang{e(\xi), [S]}} \le 
		\begin{cases}
		0 & S = S^2,\\
		-\chi(S) & \text{otherwise}
		\end{cases}\,.
	\end{equation}
	
	By \Cref{CinftysurfMorseSmale} we can equivariantly perturb $S$ such that $S_\xi$ is Morse--Smale. 
	
	Moreover, from \cite[Prop.\ 4.6.14]{geigesIntroductionContactTopology2008} we know that
	\[
		\ang{e(\xi), [S]} + \chi(S) = 2(e_+ - h_+)\,,
	\]
	where $e_+$ (resp.\ $h_+$) is the number of positive elliptic (resp.\ hyperbolic) points.
	
	From \eqref{exiS} we see that $e(\xi)$ vanishes on $[S]$, and hence
	\begin{equation}\label{eplushplussphere}
		e_+ - h_+ = 1\,.
	\end{equation}
	Thus, we know that there must exists a positive elliptic point $e$. Moreover, from the proof of \cite[Thm.\ 4.6.30]{geigesIntroductionContactTopology2008} we know that only the following two things can happen:
	\begin{enumerate}
		\item $e$ is connected to a positive hyperbolic point, or
		\item the boundary of the basin $D$ of $e$, defined as
		\[
			D = \overline{\{p \to S \mid p \text{ is connected to $e$}\}}\,,
		\]
		consists of a single negative elliptic point.
	\end{enumerate}
	In the first case, the connection must be sent away to itself from the $u$-action, and hence it can be equivariantly eliminated. Since we cannot remain without elliptic points, by Equation \eqref{eplushplussphere}, it must happen that at some point the second case happens, and the basin $D$ is the whole sphere. Hence the foliation is standardized.
\end{proof}
\begin{proof}[Proof of \Cref{thmmainthmRthree}]
	Firstly, let's apply Waldhausen's reduction. Recall the hypothesis we imposed on the involution $u$. Due to Waldhausen \cite{MR236916} such an involution of $S^3$ is necessarily conjugated to an order-2 element in $SO_4$ having an unknot of fixed point. Now is a matter of linear algebra: all these transformation are conjugate to each other. As a consequence all the possible $u$, restricted to $\R^3$, are conjugated one to another. In particular, we can suppose $u$ to be the smooth extension to $S^3$ of
	\[
		(x,y,z) \mapsto (x,-y,-z)\,.
	\]
	
	Let's now consider two contact structure $\xi_1$ and $\xi_2$ as in the statement.
	
	Thanks to \Cref{equalneighorigin} we can find two different equivariant isotopies of $\R^3$ making $\xi_1$ and $\xi_2$ coincide in a small ball $B_\epsilon$ around the origin. Moreover, thanks to \Cref{Rthreestandardxaxis} we can make $\xi_1$ and $\xi_2$ to coincide on the whole $x$-axis, by taking $0 < x_0 <\epsilon$.
	
	From \Cref{spehrestandardizable} we know that there is a local equivariant perturbation $\phi^{n,i}_t$ of the sphere of radius $n$ such that $(\phi^{n,i}_1(S^1_n))_\xi$ is standard. Hence we can substitute $\xi_i$ with $(T\phi_t^{n,i})^{-1}(\xi_i)$, such that the sphere $S^1_n$ has the same standard characteristic foliation, both for $\xi_1$ and $\xi_2$.
	
	Hence, there exists an equivariant contact morphism $\psi$ from $(\NN_1(S^1_n), \xi_1)$ to $(\NN_2(S^1_n), \xi_2)$, that we can read as an isotopy $\psi_t$ from the identity to $\psi$. Again, up to pulling back the contact structure $\xi_2$ through this isotopy, we have reached the situation in which $\xi_1$ and $\xi_2$ coincide to the boundary of spherical shells $I_n$. In such spherical shells can read an equivariant tomography, and hence we can apply the machinery of \Cref{sec:tomography}.
	
	Since $\xi_1$ and $\xi_2$ coincide on the $x$-axis (having the same constant value), the restrictions to $I_n$ have $0$ $F$-torsion difference, with $F$ being the $x$-axis. Then \Cref{thm:mainthmtomographyStwo} tells us that $\xi_1|_{I_n}$ and $\xi_2|_{I_n}$ are equivariantly isotopic rel boundary.
\end{proof}

Let's now move to proving \Cref{thmmainthmDthreecomplementary}. We will follow the same ideas as the previous result. 
\setcounter{thmR}{\getrefnumber{thmmainthmDthreecomplementary}}
\addtocounter{thmR}{-1}
\begin{thmR}
	If such action $u$ preserves the complementary of the standard $3$-ball, then the set of invariant tight contact structures on $(B^3_{\sd})^c$, inducing a fixed characteristic foliation on $S^3 = \partial B^3_{\sd}$, up to equivariant isotopy is trivial.
\end{thmR}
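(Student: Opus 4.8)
The plan is to mirror the proof of \Cref{thmmainthmRthree} almost verbatim; the only genuinely new feature of $(B^3_{\sd})^c$ is that it has one honest boundary sphere $S^2=\partial B^3_{\sd}$, carrying the prescribed characteristic foliation, and one open end. After the Waldhausen reduction performed in the proof of \Cref{thmmainthmRthree} we may assume $u$ is the smooth extension of $(x,y,z)\mapsto(x,-y,-z)$ and that $B^3_{\sd}$ is the unit ball, so that $(B^3_{\sd})^c=\{\abs{p}\ge 1\}$ and the fixed-point set of $u|_{(B^3_{\sd})^c}$ is the disjoint union of the two rays $R_\pm=\{(x,0,0):\pm x\ge 1\}$, each with one endpoint on $S^2$ and the other escaping to infinity. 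Let $\xi_1,\xi_2$ be invariant tight contact structures on $(B^3_{\sd})^c$ inducing the same singular foliation on $S^2$.

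First I would make the two structures coincide near $S^2$: since they induce the same foliation there, \Cref{EGiroux,Girouxbound} produce an equivariant contactomorphism of one-sided neighbourhoods of $S^2$, equal to the identity on $S^2$; read as an equivariant ambient isotopy (stationary on $S^2$, hence leaving the boundary foliation untouched), it lets us assume $\xi_1=\xi_2$ on a collar $C=\{1\le\abs{p}\le 1+\delta\}$. Next I would unwind the contact framing along each ray as in \Cref{Rthreestandardxaxis}: since $R_\pm$ is contractible, the $S^1$-valued angle recording the rotation of the contact plane along it lifts to $\R$, and rotating the round spheres accordingly --- equivariantly, supported in disjoint invariant neighbourhoods of $R_+$ and $R_-$ and chosen to be the identity near $S^2$ --- makes that plane \emph{constant} along the outer part of $R_+\cup R_-$. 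Doing this to both $\xi_i$, with the lifts started from the common value forced by the previous step, arranges $\xi_1=\xi_2$ on all of $R_+\cup R_-$ while keeping them equal on a collar of $S^2$. This is the heart of the matter, and the single place the open end is used: once the structures agree on the fixed-point set, the $F$-twisting difference of \Cref{twdif} is zero for $F=R_\pm$ --- whereas on the \emph{compact} ball of \Cref{thmmainthmDthree} the fixed arc has both endpoints frozen on the boundary, so the unwinding is obstructed and the twisting survives; this is precisely why the two answers differ.

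With $\xi_1,\xi_2$ in this normal form I would run the onion argument. For each integer $n\ge 2$, the round sphere $S_n=\{\abs{p}=n\}$ can be equivariantly $C^\infty$-perturbed, for each $\xi_i$, to carry the standard foliation with one positive and one negative elliptic point (\Cref{spehrestandardizable}; its proof uses only tightness of $\xi_i$ near $S_n$, the obstruction it removes being detected in an arbitrarily small neighbourhood of $S_n$ via \Cref{divsetS2iscircle}), and then \Cref{EGiroux,Girouxbound} let us assume $\xi_1=\xi_2$ near every $S_n$. Writing $I_1=\{1\le\abs{p}\le 2\}$ and $I_n=\{n\le\abs{p}\le n+1\}$ for $n\ge 2$, each $I_n$ is an equivariant tomography already in the standard form of \Cref{standardizedtomography}, we have $\xi_1=\xi_2$ near $\partial I_n$, and the $F$-twisting difference along either fixed arc $R_\pm\cap I_n$ vanishes; hence \Cref{thm:mainthmtomographyStwo} together with \Cref{rmkboundary} gives, for each $n$, an equivariant isotopy stationary near $\partial I_n$ carrying $\xi_1|_{I_n}$ to $\xi_2|_{I_n}$. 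Being stationary near the separating spheres, these patch to a single equivariant isotopy of all of $(B^3_{\sd})^c$, stationary near $S^2$, taking $\xi_1$ to $\xi_2$; this is the asserted triviality.

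The substantive work, beyond invoking \Cref{thmmainthmRthree} and its ingredients, is bookkeeping: choosing the lifts in the ray-unwinding step compatibly with the agreement already obtained on $C$ and keeping that isotopy off a neighbourhood of $S^2$ so the prescribed boundary foliation is preserved; confirming that \Cref{spehrestandardizable} --- stated for $\xi$ on all of $\R^3$ --- survives when $\xi_i$ is only defined and tight on $(B^3_{\sd})^c$; and checking the countably many shell isotopies patch smoothly. I expect none of this to be conceptually hard: once dispatched, the proof is the one for \Cref{thmmainthmRthree} with the open end of $(B^3_{\sd})^c$ playing the role of $\abs{x}\to\infty$.
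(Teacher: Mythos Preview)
Your approach is essentially the paper's: Waldhausen reduction, match near the boundary via \Cref{Girouxbound}, straighten along the fixed axis, standardise a sequence of large spheres via \Cref{spehrestandardizable}, and apply \Cref{thm:mainthmtomographyStwo} shell by shell. Two minor differences worth noting. First, the paper does not assume that $W(B^3_{\sd})$ becomes the unit ball after Waldhausen---only that it is some invariant ball containing the origin, which is all the onion argument needs; your stronger normalisation would require an extra equivariant isotopy of the boundary sphere. Second, the paper straightens only the single ray $[x_0,+\infty)$ using the global sphere-rotation of \Cref{Rthreestandardxaxis}, rather than both rays via localised rotations: one fixed arc suffices because \Cref{twdefequal} makes the $F$-twisting difference independent of the choice of $F$. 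Your phrase ``rotating the round spheres \ldots\ supported in disjoint invariant neighbourhoods of $R_\pm$'' is slightly inconsistent (the isotopy of \Cref{Rthreestandardxaxis} acts on the entire sphere of each radius), but the intended local rotations about the $x$-axis can certainly be built and the argument goes through either way.
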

\begin{proof}
	Thanks to Waldhausen's result there exists an infinity-preserving diffeomorphism $W$ of the 3-sphere such that $W \circ u \circ W^{-1}$ is the standard reflection around the $x$-axis. Moreover, $W$ must send the sphere $\partial B_{\sd}$ to a bidimensional compact surface, invariant under reflection, properly containing a portion of the $x$-axis. Let $B$ be the ball $W(B_{\sd})$; we can suppose such ball to contain the origin $\zero$. 
	
	Let $\xi_1$ and $\xi_2$ be two invariant tight structures on the complementary $B^c$ inducing the same characteristic foliation on $\partial B$. By \Cref{Girouxbound} there exists an isotopy rel boundary making them coincide in a small external neighborhood of $B$. Moreover, let's fix a point $x_0$ on the $x$-axis, not belonging to $\partial B$ but contained in such neighborhood. By applying an isotopy that locally coincides with the one given by \Cref{Rthreestandardxaxis}, we can suppose that $\xi_1$ and $\xi_2$ coincide on the portion $[x_0, +\infty)$ of the $x$-axis. 
	
	Finally, following the previous proof, we can fix a sequence of arbitrary large spheres and make $\xi_1$, $\xi_2$ coincide near such spheres. Finally, the regions between such spheres are diffeomorphic to $S^2 \times [-1,1]$, and give rise to an equivariant tomography. The same applies to the region between $\partial B$ and the smaller sphere. Hence we can apply \Cref{thm:mainthmtomographyStwo}.
\end{proof}

\section{Proof of Theorems \getrefnumber{thmmainthmDthree} and \getrefnumber{thmmainthmSthree}}\label{sec:proofmainthmclose}
The aim of this section is to prove \Cref{thmmainthmDthree,thmmainthmSthree}.
\setcounter{thmR}{\getrefnumber{thmmainthmDthree}}
\addtocounter{thmR}{-1}
\begin{thmR}
	If such action $u$ preserves the standard $3$-ball, then the set of invariant tight contact structures on $B^3_{\sd}$, inducing a fixed characteristic foliation on $S^3 = \partial B^3_{\sd}$, up to equivariant isotopy is a $\Z$-torsor.
\end{thmR}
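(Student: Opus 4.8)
The plan is to reuse the architecture of the proofs of \Cref{thmmainthmRthree,thmmainthmDthreecomplementary}, with the understanding that now the twisting torsor of \Cref{sec:tomography} can no longer be argued away: $B^3_{\sd}$ is compact with one boundary sphere, so the shell trick produces a single equivariant tomography and the integer it carries survives. First I would apply Waldhausen's theorem exactly as in the other proofs to conjugate $u$ to $(x,y,z)\mapsto(x,-y,-z)$; since $u$ preserves $B^3_{\sd}$, its image $B$ is an invariant embedded $3$--ball, which we may take to contain the origin $\zero$, and whose fixed--point set $F:=\mathrm{Fix}(u)\cap B$ is a single properly embedded arc of the $x$--axis, transverse to $\partial B$ at its two endpoints. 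Given two invariant tight contact structures $\xi_1,\xi_2$ on $B$ inducing the prescribed characteristic foliation on $\partial B$, I would then localise twice: by \Cref{Girouxbound} an equivariant isotopy rel $\partial B$ makes them coincide on a collar of $\partial B$ inside $B$, and by \Cref{equalneighorigin} (in the form of \Cref{EDarbouxustd}, cut off by a bump function so the isotopy is supported near $\zero$) a further equivariant isotopy makes them coincide on a small ball $B_\epsilon$ around $\zero$, without touching the collar.

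Next I would write $B=B_\epsilon\cup N$ with $N:=\overline{B\setminus B_\epsilon}$, and observe that $N\cong S^2\times[-1,1]$ with the inherited action is an equivariant tomography, its two fixed segments being the two components of $F\setminus B_\epsilon$; on each of the two boundary spheres of $N$ the structures $\xi_1,\xi_2$ agree, hence induce the same characteristic foliation. By \Cref{thm:mainthmtomographyStwo} the restrictions $\xi_1|_N$ and $\xi_2|_N$ are classified, up to equivariant isotopy rel $\partial N$, by an integer $\delta(\xi_1,\xi_2)\in\Z$, which by \Cref{twdefequal} may be computed as the relative winding of the contact planes along either fixed segment. If $\delta(\xi_1,\xi_2)=0$, the tomography theorem yields an equivariant isotopy of $N$ rel $\partial N$ from $\xi_1|_N$ to $\xi_2|_N$; by \Cref{rmkboundary} it may be chosen stationary near $\partial N$, hence it extends by the identity across $B_\epsilon$ (where $\xi_1=\xi_2$) to an equivariant isotopy of $B$ rel $\partial B$, and composing with the two localising isotopies shows $\xi_1$ and $\xi_2$ are equivariantly isotopic. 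Conversely $\delta$ is an invariant of the equivariant--isotopy class rel $\partial B$: such an isotopy fixes $F$ setwise and $\partial F$ pointwise, so it moves $\xi|_F$ through a homotopy rel endpoints inside the $\Z$--torsor of \Cref{twZtorsor}, and together with \Cref{parametricdividedetermined} this should pin $\delta$ down independently of the localisations.

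To complete the torsor structure I would exhibit, for each $n\in\Z$, an invariant tight contact structure on $B$ realising the value $n$. On the tomography side, \Cref{divconvex} builds an invariant contact structure on $N$ whose smooth family of dividing circles $\Gamma_z$ winds $n$ extra times in $\Loop^{eq}(S^2)$ compared with the standard one and which coincides with the standard structure near both boundary spheres; it is tight because all its slices $S^z$ are convex with a single dividing circle, so \Cref{divsetS2iscircle} forbids an overtwisted disc. Gluing it onto the standard structure on $B_\epsilon$ produces an invariant contact structure on $B$ with the correct boundary foliation and twisting $n$, and one checks it is tight by a convex--gluing argument: the interface $\partial B_\epsilon$ is convex with connected dividing set and the two sides agree near it, so the standard ball $B_\epsilon$ is simply absorbed. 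Running this for every $n$, together with the injectivity established above, exhibits $\Z$ acting freely and transitively on the set of equivalence classes, which is the assertion.

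The step I expect to cause the most trouble is making the integer $\delta$ unambiguously intrinsic. Notably, the winding of $\xi$ along the \emph{whole} arc $F$ rel its endpoints does not obviously recover $\delta$: the two halves of $F$ enter the full chord with opposite induced orientations, so by the $F$--independence of the tomography twisting (\Cref{twdefequal}) their contributions threaten to cancel, and one has to be careful that the quantity being tracked is genuinely the tomography's $\delta$ and that it depends neither on $\epsilon$, nor on the model to which $\xi_1,\xi_2$ are standardised near $\zero$ and near $\partial B$, nor on the chosen localising isotopies. Concretely this reduces to showing that any two such standardisations of a single $\xi$ agree, after restriction to a common shell, up to equivariant isotopy rel that shell's boundary, and then invoking \Cref{parametricdividedetermined}. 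The secondary technical point is the tightness of the glued model in the realisability step. Everything else is a routine assembly of the equivariant Gray, Darboux, Giroux and tomography machinery already in place.
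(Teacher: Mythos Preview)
Your proposal follows essentially the same route as the paper: conjugate $u$ to the standard rotation via Waldhausen, localise near $\partial B$ by \Cref{Girouxbound} and near $\zero$ by \Cref{equalneighorigin}, and then hand the resulting shell to \Cref{thm:mainthmtomographyStwo}. The paper's own proof is in fact terser than yours---it simply says ``the result follows by applying \Cref{thm:mainthmtomographyStwo} to $A$'' and does not spell out the well-definedness of $\delta$ under change of localisation or the realisability of each $n\in\Z$, both of which you rightly flag as the points requiring care.
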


\begin{proof}
	Following the same procedure as in the proof of \Cref{thmmainthmDthreecomplementary}, we can suppose $u$ to be the standard reflection, at the cost of replacing $B^3_{\sd}$ with an arbitrary 3-ball $B$ containing the origin $\zero$.
	
	Given then two contact structures on $B$ inducing the same characteristic foliation on $\partial B$, they can be made to coincide near the boundary by an isotopy rel boundary, thanks to the proof of \Cref{Girouxbound}. Likewise, by \Cref{equalneighorigin} there is a further isotopy that makes the two contact structures coincide near the origin $\zero$, and such that a small standard $2$–sphere $S$ centered at $\zero$ is convex. 
	
	The region $A$ between $S$ and $\partial B$ is diffeomorphic to $S^2 \times [-1,1]$, and equipped with $u$ gives us an equivariant tomography. Now the result follows by applying \Cref{thm:mainthmtomographyStwo} to $A$.
\end{proof}
Let's finish the section with the proof of \Cref{thmmainthmSthree}.
\setcounter{thmR}{\getrefnumber{thmmainthmSthree}}
\addtocounter{thmR}{-1}
\begin{thmR}
	The set of invariant tight contact structures on $S^3$, up to equivariant isotopy, is a $\Z$-torsor.
\end{thmR}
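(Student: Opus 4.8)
\textit{Proof proposal.} The plan is to reduce the statement to \Cref{thmmainthmDthree} by excising a small invariant ball around a point of the fixed circle, and then to obtain surjectivity of the twisting invariant by a capping-off construction.

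First I would standardize the involution. As in the proofs of \Cref{thmmainthmRthree,thmmainthmDthreecomplementary}, Waldhausen's theorem conjugates $u$ to the smooth extension to $S^3 = \R^3 \cup \{\infty\}$ of $(x,y,z) \mapsto (x,-y,-z)$, whose fixed-point set is the unknotted circle $F = (\text{$x$-axis}) \cup \{\infty\}$. Let $\xi_1,\xi_2$ be two invariant tight contact structures on $S^3$. Since the local model of $u$ at $\infty$ coincides with the one at $\zero$, \Cref{EDarbouxustd} and \Cref{equalneighorigin} apply there, so after an equivariant isotopy of $S^3$ we may assume that $\xi_1$ and $\xi_2$ agree on a small invariant ball $B$ around $\infty$, hence near $\partial B$. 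Set $B' \coloneqq \overline{S^3 \setminus B}$: this is a $3$-ball containing $\zero$, preserved by $u$, with $u|_{B'}$ the standard reflection and fixed-point set the arc $F \cap B'$. Thus $B'$ is exactly the kind of ball handled in the proof of \Cref{thmmainthmDthree}, and $\xi_1|_{B'}, \xi_2|_{B'}$ are invariant tight contact structures on $B'$ inducing the same characteristic foliation on $\partial B'$.

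Now I would invoke \Cref{thmmainthmDthree}: such structures on $B'$ are classified up to equivariant isotopy rel $\partial B'$ by the twisting difference $d_F$ of \Cref{twdif} computed along the fixed arc $F \cap B'$, which I take as the definition of $d_F(\xi_1,\xi_2)$. If $d_F(\xi_1,\xi_2) = 0$, then \Cref{thmmainthmDthree}, in its rel-boundary form with the isotopies stationary near $\partial B'$, yields an equivariant isotopy of $B'$ carrying $\xi_1|_{B'}$ to $\xi_2|_{B'}$; extending it by the identity on $B$ produces an equivariant isotopy of $S^3$. The converse implication and the independence of $d_F$ of the ball $B$, of the standardizing isotopy, and of the chosen point of $F$, follow from the arguments of \Cref{twdefequal} together with the proof of \Cref{thmmainthmDthree}; hence $d_F$ descends to a well-defined $\Z$-valued invariant of ordered pairs of equivalence classes, vanishing exactly on equivariantly isotopic pairs.

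It remains to show that this invariant is surjective onto $\Z$, which simultaneously gives that the set is non-empty and that the $\Z$-action is free. The standard tight contact structure $\xi_\sd$ on $S^3 \subseteq \mathbb{C}^2$ is invariant for the involution $(z_1,z_2) \mapsto (\bar z_1, \bar z_2)$, since $u^*\alpha_\sd = -\alpha_\sd$ for $\alpha_\sd = \sum_{i} x_i\,dy_i - y_i\,dx_i$; its fixed-point set is an unknot, so by Waldhausen this is an instance of our situation. For surjectivity, \Cref{thmmainthmDthree} supplies, for every $n \in \Z$, an invariant tight contact structure $\zeta_n$ on $B'$ agreeing with $\xi_\sd|_{B'}$ near $\partial B'$ and with twisting difference $n$ from it; capping $\zeta_n$ off with $\xi_\sd|_B$ along $\partial B$ produces an invariant contact structure $\xi_n$ on $S^3$ with $d_F(\xi_n,\xi_\sd) = n$. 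The main obstacle I anticipate is verifying that each $\xi_n$ is \emph{tight}: gluing the tight structure $\zeta_n$ on $B'$ to the tight structure $\xi_\sd|_B$ along the sphere $\partial B$ --- which one can arrange to be convex with connected dividing set --- must be shown to remain tight, either by appealing to a gluing principle for convex spheres with a single dividing curve, or by realizing $\zeta_n$ in an explicit vertically invariant model near $\partial B'$ so that $\xi_n$ becomes a small perturbation of a manifestly universally tight structure. Granting this, the $\xi_n$ are pairwise non-equivariantly-isotopic by the previous paragraph, so $\Z$ acts freely; together with non-emptiness and surjectivity, the set of invariant tight contact structures on $S^3$ up to equivariant isotopy is a $\Z$-torsor.
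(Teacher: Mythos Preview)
Your reduction is exactly the paper's: standardize $u$ via Waldhausen, use \Cref{equalneighorigin} to make the two structures agree on a small invariant ball around a point of the fixed circle (the paper excises at the origin, you at $\infty$; this is cosmetic), and then invoke \Cref{thmmainthmDthree} on the complementary ball. The paper's proof literally stops at ``Now we can apply \Cref{thmmainthmDthree}'' and does not spell out surjectivity, well-definedness, or the tightness of any glued structure, so on those points your proposal is already more careful than the paper.

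The obstacle you flag is real and the paper does not address it either. Your first suggested fix --- Colin's gluing theorem for tight contact structures along a convex $S^2$ with connected dividing set --- is the standard way to close it. A more self-contained alternative, which avoids gluing entirely, is to exhibit the $\xi_n$ as pullbacks of $\xi_\sd$: take the equivariant diffeomorphism $\phi_n$ of $S^3$ that is the identity on the excised ball and, on the tomography region $S^2 \times [-1,1]$ inside $B'$, rotates each slice $S^2 \times \{z\}$ about its axis (the two fixed points) by angle $2\pi n\,\rho(z)$ for a cut-off $\rho$ going from $0$ to $1$. This commutes with $u$ since both are rotations about the same axis, fixes the arc $F\cap B'$ pointwise, and its differential along $F$ rotates the normal plane by $n$ full turns; hence $\phi_n^*\xi_\sd$ is tight (being a pullback), invariant, and has $F$-twisting difference $n$ from $\xi_\sd$. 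With either fix your argument is complete and matches the paper's intended one.
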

\begin{proof}
	As before, we can suppose $u$ to be the standard rotation.
	
	Given any tight contact structure on $S^3$, we can use \Cref{equalneighorigin} to standardize it near the origin, inside a small 3-ball $B_{\epsilon}$. Now we can apply \Cref{thmmainthmDthree} to the closure of $S^3 \setminus B_{\epsilon}$: via Sch\"{o}nflies theorem such closure can be brought to a standard 3-ball.
\end{proof}
\printbibliography
\end{document}